\newtheorem{theorem}{Theorem}[section]
\newtheorem{lemma}[theorem]{Lemma}
\newtheorem{proposition}[theorem]{Proposition}
\newtheorem{corollary}[theorem]{Corollary}
\newtheorem{theo}{Theorem}
\theoremstyle{remark}
\newtheorem{remark}[theorem]{Remark}
\newtheorem*{claim*}{Claim}
\newcommand{\C}{\ensuremath{\mathbb{C}}}
\newcommand{\R}{\ensuremath{\mathbb{R}}}
\newcommand{\g}[1]{\ensuremath{\mathfrak{#1}}}
\newcommand{\wt}[1]{\ensuremath{\widetilde{#1}}}
\DeclareMathOperator{\tr}{tr}
\DeclareMathOperator{\id}{Id}
\DeclareMathOperator{\ad}{ad}
\DeclareMathOperator{\spann}{span}
\DeclareMathOperator{\Ric}{Ric}
\DeclareMathOperator{\Der}{Der}
\newcommand{\Cl}{\ensuremath{\mathsf{Cl}}}
\newcommand{\Ss}{\ensuremath{\mathcal{S}}}
\renewcommand{\H}{\ensuremath{\mathbb{H}}}
\renewcommand{\mod}[1]{\ensuremath{\;(\mathrm{mod\;}#1)}}
\newcommand{\Spin}{\ensuremath{\mathsf{Spin}}}
\begin{document}
\title[]{
Codimension one Ricci soliton subgroups\\of solvable Iwasawa groups
}
\author[M.~Dom\'{\i}nguez-V\'{a}zquez]{Miguel Dom\'{\i}nguez-V\'{a}zquez}
\address{Department of Mathematics, Universidade de Santiago de Compostela, Spain.}
\email{miguel.dominguez@usc.es}
\author[V.~Sanmart\'in-L\'opez]{V\'ictor Sanmart\'in-L\'opez}
\address{Department of Mathematics, Katholieke Universiteit te Leuven, Belgium.}
\email{victor.sanmartinlopez@kuleuven.be}
\author[H.~Tamaru]{Hiroshi Tamaru}
\address{Department of Mathematics, Osaka City University, Japan.}
\email{tamaru@sci.osaka-cu.ac.jp}

\begin{abstract}
Recently, Jablonski proved that, to a large extent, 
a simply connected solvable Lie group endowed with a left-invariant Ricci soliton metric can be isometrically embedded into the solvable Iwasawa group of a non-compact symmetric space. 
Motivated by this result, we classify codimension one subgroups of the solvable Iwasawa groups of irreducible symmetric spaces of non-compact type whose induced metrics are Ricci solitons. 
We also obtain the classifications of codimension one Ricci soliton subgroups of Damek-Ricci spaces and generalized Heisenberg groups. 
\end{abstract}

\thanks{The first two authors have been supported by projects PID2019-105138GB-C21 (AEI/FEDER, Spain) and ED431C 2019/10, ED431F 2020/04 (Xunta de Galicia, Spain). The first author acknowledges support of the Ram\'{o}n y Cajal program of the Spanish State Research Agency.  
The third author was supported by JSPS KAKENHI Grant Number JP19K21831. 
This work was partially supported by the Fonds Wetenschappelijk Onderzoek – Vlaanderen (FWO), the Fonds de la Recherche Scientifique – FNRS under EOS Project No.~G0H4518N, and Osaka City University Advanced Mathematical Institute (MEXT Joint Usage/Research Center on Mathematics and Theoretical Physics JPMXP0619217849). 
} 

\subjclass[2010]{53C25, 22E25, 53C30, 53C35, 53C40}
\keywords{Ricci soliton, solvsoliton, nilsoliton, hypersurface, codimension one, generalized Heisenberg group, Damek-Ricci space, symmetric space, Iwasawa group}
\maketitle
\section{Introduction}

The investigation of homogeneous Einstein manifolds and, more recently, homogeneous Ricci solitons constitutes an important area of research in differential geometry. By their very nature, these geometric structures have typically been studied with Lie theoretic methods in combination with tools of intrinsic Riemannian geometry. In spite of outstanding recent progress (e.g.~\cite{Jab:gt,Jab:crelle,LafuenteLauret:jdg,Lauret:mathann,Lauret:crelle}), the current understanding is not yet complete, and the classification problem seems to be rather difficult. 
Several nice classification results have been obtained, 
but mainly in low dimensions (see for example \cite{Arroyo-Lafente1, Arroyo-Lafente2, Fernandez-Culma, Will}). 

Recently, Jablonski~\cite{Jab:arxiv} proved the following remarkable result, 
which lies at the intersection of Ado's and Nash's embedding theorems: 
\emph{every simply connected Ricci soliton solvmanifold (in particular, every Einstein solvmanifold) or 
	$2$-step nilpotent Lie group with left-invariant metric can be realized as a submanifold of a symmetric space}. 
Let us be more precise. 
Let $M\cong G/K$ be a symmetric space of non-compact type. The connected real semisimple Lie group $G$ admits an Iwasawa decomposition $G\cong KAN$. 
This allows us to identify, as Riemannian manifolds, $M$ with the solvable Iwasawa group 
$AN$ endowed with certain left-invariant metric (see~\S\ref{subsec:symmetric_prelim} for more details). 
Now, let $S$ be a simply connected solvable Lie group with a left-invariant metric (solvmanifold), 
and assume that $S$ is either completely solvable and Ricci soliton, or $2$-step nilpotent. Then, Jablonski proved that there is an $n\in\mathbb{N}$ and an injective Lie group homomorphism $\phi\colon S\to AN$ that is also an isometric embedding, where $AN$ is the solvable Iwasawa group of the symmetric space $\mathsf{SL}(n,\R)/\mathsf{SO}(n)$, endowed with its natural (unique up to scaling) left-invariant Einstein metric. Since any Ricci soliton solvmanifold (and in particular, any Einstein solvmanifold) is isometric to a completely solvable Lie group with a left-invariant metric~\cite{Jab:crelle}, one gets the result emphasized above. 
 
Therefore, these results ensure that some important families of homogeneous Ricci solitons always arise as submanifolds of the solvable Iwasawa group associated with a symmetric space of non-compact type. This opens up the possibility of addressing the investigation of \emph{homogeneous Ricci solitons from the viewpoint of extrinsic submanifold geometry in symmetric spaces}. In this sense, on the one hand one may aim to have some interesting families of examples, and on the other hand to investigate general properties of such submanifolds and, ideally, derive classification results. 

Regarding the first problem, there are two important collections of examples of Einstein submanifolds of symmetric spaces of non-compact type. One of them is made of the irreducible totally geodesic submanifolds. This is a set of somehow trivial examples from an intrinsic viewpoint, as these submanifolds are themselves symmetric spaces. But one has to emphasize that the classification problem of totally geodesic submanifolds in symmetric spaces is still outstanding (see for example \cite{BO:jdg} for a recent contribution).  
The second set of examples is due to 
the third author~\cite{Tamaru}, 
who proved that the solvable part $A_\Phi N_\Phi$ of a parabolic subgroup  of the (real semisimple) isometry group of a symmetric space of non-compact type $M$ is an Einstein solvmanifold (arising as an equivariant isometric embedding into the corresponding Iwasawa group $AN$). Here, $\Phi$ is an arbitrary subset of simple restricted roots of the symmetric space. These submanifolds are interesting in that they provide examples of Einstein solvmanifolds with nilradicals of arbitrary large degrees of nilpotency. Moreover, from an extrinsic point of view, they are minimal (and, except in some limit cases, not totally geodesic) submanifolds.

In view of the existence of general embedding results and some interesting families of examples, 
we propose to undertake the project of investigating homogeneous Ricci solitons from the perspective of extrinsic submanifold geometry. 
In the present article, we will restrict our attention to submanifolds of codimension one. 
The reason for this is that, as is standard in submanifold geometry, 
the codimension one case is the first step towards a more general investigation, 
and at the same time it usually provides interesting geometric phenomena and examples. 
However, it is important to mention at this point that we cannot expect to gather in our classifications some of the examples of Einstein submanifolds mentioned above, 
except for some trivial cases. 
Indeed, it is well-known that the only irreducible symmetric spaces which admit totally geodesic hypersurfaces are those of constant curvature and, 
by definition, none of the examples in \cite{Tamaru} is of codimension one either. 

Our first result is the classification of codimension one subgroups of the solvable Iwasawa group $AN$ of an irreducible symmetric space of non-compact type $M$ 
which are Ricci solitons.

\begin{theo}\label{th:symmetric}
	Let $S$ be a codimension one connected Lie subgroup of the solvable Iwasawa group $A N$
of an irreducible symmetric space of non-compact type $M$. Then $S$ is a Ricci soliton 
with respect to the metric induced by the left-invariant Einstein metric on $AN$ if and only~if:
	\begin{enumerate}[\rm(i)]
		\item $M$ is arbitrary and $S$ contains the nilpotent part $N$,
		\item $M$ is a complex hyperbolic plane $\C \mathsf{H}^2$ and $S$ is a Lohnherr hypersurface $W^3$, or 
		\item $M$ is a real hyperbolic space $\R\mathsf{H}^n$ and $S$ is arbitrary. 
	\end{enumerate}
\end{theo}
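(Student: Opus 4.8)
\emph{Overall strategy.} I will argue entirely at the level of the metric solvable Lie algebra $\g{s}\subseteq\g{a}\oplus\g{n}$ tangent to $S$. Since $S$ is a subgroup, the induced metric is left-invariant, so $S$ is a Ricci soliton if and only if $\g{s}$ is a \emph{solvsoliton}; by Lauret's characterization \cite{Lauret:mathann} this holds precisely when $\Ric_{\g{s}}=c\,\id+D$ for some $c\in\R$ and some $D\in\Der(\g{s})$. The plan therefore has three steps: (1) describe all codimension one subalgebras $\g{s}$; (2) compute $\Ric_{\g{s}}$; and (3) decide for which $\g{s}$ the operator $\Ric_{\g{s}}-c\,\id$ is a derivation. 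For step (1) I encode $\g{s}=\xi^{\perp}$ through a unit normal $\xi$ and decompose it along $\g{a}\oplus\bigoplus_{\lambda}\g{g}_{\lambda}$. The subalgebra condition $\langle[\g{s},\g{s}],\xi\rangle=0$ immediately splits the problem into two regimes: either $\xi\in\g{a}$, which is equivalent to $\g{n}\subseteq\g{s}$, or $\xi\notin\g{a}$, in which case the root-space brackets force severe restrictions on the root components of $\xi$; I will reduce these to a normal form using the isotropy action of $N_{K}(\g{a})$ together with the constraints coming from the subalgebra condition itself.

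\emph{The case $\g{n}\subseteq\g{s}$ (item (i)).} Here $\g{s}=(\g{a}\ominus\R\xi)\oplus\g{n}$ is automatically a subalgebra. Since $AN$ is Einstein, its nilradical $\g{n}$ with the induced metric is a nilsoliton, and each $H\in\g{a}$ acts on $\g{n}$ by a symmetric derivation, because the restricted root spaces are mutually orthogonal and $\ad H$ is diagonalizable with real eigenvalues $\lambda(H)$. Thus $\g{b}\oplus\g{n}$, with $\g{b}=\g{a}\ominus\R\xi$ an abelian algebra of symmetric derivations of the nilsoliton $\g{n}$, is a solvsoliton by Lauret's construction of solvsolitons as such abelian extensions \cite{Lauret:mathann,Lauret:crelle}. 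This establishes that every $S$ containing $N$ is a Ricci soliton, regardless of the ambient space, which is exactly item (i).

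\emph{The case $\g{n}\not\subseteq\g{s}$.} For these subalgebras I compute $\Ric_{\g{s}}$ through the extrinsic geometry of the hypersurface $S\subset AN$, in line with the submanifold viewpoint of the paper. Since $AN$ is Einstein and carries the explicit curvature tensor of a symmetric space, the Gauss equation expresses $\Ric_{\g{s}}$ in terms of the constant $\Ric_{AN}$, the ambient curvature, and the shape operator $\Ss$ of $S$, which for a subgroup hypersurface is determined explicitly by $\ad_{\xi}$ and the inner product. With $\xi$ in normal form, both $\Ss$ and $\Ric_{\g{s}}$ become concrete in terms of the structure constants of $\g{a}\oplus\g{n}$, that is, of the Clifford-type module structure of the root spaces. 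I then impose the derivation identity $D[X,Y]=[DX,Y]+[X,DY]$ for $D=\Ric_{\g{s}}-c\,\id$ on pairs of root vectors, obtaining an overdetermined linear system; the substance of the argument is to show that it is solvable only in the two exceptional families.

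\emph{Main obstacle.} The technical heart is this final elimination, and I expect it to be the hard part. The mean-curvature contribution to $\Ric_{\g{s}}$ (through $\tr\Ss$) should obstruct the derivation property as soon as more than one restricted root is present or the root spaces are large, forcing the ambient space to have rank one and small dimension. The rank one cases are then finished by direct analysis: for $\R\mathsf{H}^{n}$ the relevant subalgebras are intrinsically flat or are themselves real-hyperbolic models, hence Einstein and automatically Ricci solitons, yielding item (iii); for $\C\mathsf{H}^{n}$ the only surviving subalgebra is the Lohnherr one, $\g{a}\oplus\g{w}\oplus\g{g}_{2\alpha}$ with $\g{w}$ a real hyperplane of $\g{g}_{\alpha}$, and the derivation condition singles out $n=2$, giving $W^{3}\subset\C\mathsf{H}^{2}$ as in item (ii), while the analogous computations rule out $\H\mathsf{H}^{n}$ and $\mathbb{O}\mathsf{H}^{2}$. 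The delicate and genuinely nontrivial point is the sharp separation between $\C\mathsf{H}^{2}$ and the higher-dimensional $\C\mathsf{H}^{n}$, where the interaction between the complex structure on $\g{g}_{\alpha}$ and the one-dimensional center $\g{g}_{2\alpha}$ varies with $n$ and decides whether $\Ric_{\g{s}}-c\,\id$ can be a derivation.
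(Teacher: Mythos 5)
Your overall strategy is the same as the paper's: replace ``Ricci soliton'' by ``algebraic Ricci soliton'' (note that this reduction needs the complete solvability of $\g{s}$ and the solvmanifold result of \cite{Lauret:crelle}, not the nilmanifold paper \cite{Lauret:mathann} you cite), classify the admissible unit normals $\xi$, compute $\Ric$ via the Gauss equation, and test when $\Ric-c\id$ is a derivation. Your treatment of item~(i) is the one place where you genuinely diverge, and it works: instead of computing $\Ss_H=\ad(H)$ and $R_H=-\ad(H)^2$ extrinsically as in Proposition~\ref{prop:horosphere}, you observe that $\g{n}$ is the nilsoliton nilradical of the Einstein space $AN$ and that each $\ad(H)\rvert_{\g{n}}$, $H\in\g{a}$, is a symmetric derivation (it acts as $\lambda(H)\id$ on the mutually orthogonal root spaces), so Lauret's construction of solvsolitons as abelian extensions of nilsolitons by symmetric derivations applies directly. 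That is a clean, citable shortcut for item~(i).

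Everything past that point, however, is a plan rather than a proof, and the three missing blocks are precisely the mathematical content of the theorem. First, the normal form for $\xi\notin\g{a}$: what you need is that $\xi\in\R H_\alpha\oplus\g{g}_\alpha$ for a \emph{simple} root $\alpha$ (Lemma~\ref{lemma:JDG}, due to Berndt--Tamaru); you neither state this precisely nor derive it from the subalgebra condition. Second, and most seriously, the elimination of higher rank: you write that the mean-curvature term ``should obstruct the derivation property,'' but producing that obstruction is the hard part. The paper's Theorem~\ref{th:xi_diagonal} does it by choosing a simple root $\beta$ adjacent to $\alpha$ in the Dynkin diagram, computing $D$ on $(\g{a}\ominus\R H_\alpha)\oplus\g{g}_\beta\oplus\g{g}_{\beta+\alpha}\oplus\R U$, first forcing $c=0$ from the derivation identity on $[H,Y_\beta]$ with $H\in\g{a}\ominus\R H_\alpha$, and then extracting $b^3|\alpha|^3A_{\alpha,\beta}=0$; nothing in your proposal generates these identities, and it is not a priori clear that the obstruction comes only from $\tr\Ss_\xi$. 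Third, the rank one case: the assertion that the derivation condition singles out $\C\mathsf{H}^2$ among $\C\mathsf{H}^n$, $\H\mathsf{H}^n$ and $\mathbb{O}\mathsf{H}^2$ is exactly Theorem~\ref{th:DR}, whose proof rests on a Clifford-module analysis (when $\g{J}\xi$ and $(\g{J}\xi)^\perp$ are abelian, including the half-spin decomposition for $N(m,1)$ with $m\in\{4,8\}$) that you do not attempt. You have correctly located the difficulties, but locating them is not overcoming them.
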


In item~(i), 
$S$ is a codimension one connected Lie subgroup of $AN$ containing $N$ if and only if its Lie algebra is of the form 
$(\g{a}\ominus\ell)\oplus\g{n}$, 
where $\g{a}\oplus\g{n}$ is the Lie algebra of $A N$, 
$\ominus$ denotes orthogonal complement, and $\ell$ is a one-dimensional vector subspace of $\g{a}$. 
In the rank one case, such an $S$ is precisely $N$. 
In the higher rank case, there are continuously many isometry classes of such subgroups $S$~\cite{BT:crelle}, 
and some of them are Einstein~\cite[\S3]{CHKTT18}. 
We note that the class of examples in item~(i) contains the horospheres of $M$, 
that is, the level sets of Busemann functions~\cite[\S1.10]{eberlein} (for details see Remark~\ref{rem:horospheres}).
We also recall that a Lohnherr hypersurface $W^{2n-1}$ (also called a fan) is the unique (up to congruence) minimal homogeneous hypersurface of a complex hyperbolic space $\C \mathsf{H}^n$~\cite{BD:gd}. It can be defined as~the connected Lie subgroup of $AN\cong\C\mathsf{H}^n$ with Lie algebra $\g{a}\oplus(\g{n}\ominus\ell)$, where $\ell$ is a one-dimensional subspace of the only simple root space $\g{g}_\alpha\subset\g{n}=\g{g}_\alpha\oplus\g{g}_{2\alpha}$. Finally, in relation to item (iii) of Theorem~\ref{th:symmetric}, we comment that any codimension one subgroup of~the Iwasawa group of a real hyperbolic space is a space form of constant curvature, and~hence~Einstein.

We will sketch below in this introduction the common approach to the main results of this paper, including Theorem~\ref{th:symmetric}. 
But at this point we would like to draw the attention to the main difficulties for proving Theorem~\ref{th:symmetric}. 
One is that codimension one subgroups $S$ of $A N$ form a rich class. 
There is a one-parameter family of isometry classes of such $S$ for the rank one case, and even a larger family for the higher rank case, depending on the root system. 
We need to study the Ricci soliton condition for all of them. 
Another difficulty is the very determination of the Ricci tensor of the hypersurface $S$ of $M$. 
The reason is that the Levi-Civita connection of a symmetric space of non-compact type, 
despite admitting a Lie algebraic description, turns out to be quite hard to handle in full generality, 
since it involves Lie brackets which relate (positive) root spaces in a complicated way. 
One sign of this difficulty is that, previously to this article, a classification as in Theorem~\ref{th:symmetric} had only been obtained, apart from the well-known case of constant curvature $M\cong\R\mathsf{H}^n$, 
in the very specific setting of the complex hyperbolic space $M\cong\C \mathsf{H}^n$~\cite{HKT:tohoku}. 
Note that both cases are of rank one, and hence the associated root systems only have one simple root. 
There are some other studies in particular rank two spaces~\cite{Suh}, but the hypersurfaces are assumed to satisfy some additional conditions. 
In this paper, we address and handle the problem for a general root system, yielding the (as far as we know) first general and systematic investigation of Ricci soliton hypersurfaces in the whole family of symmetric spaces of non-compact type. 

We would also like to note that Theorem~\ref{th:symmetric} could be stated in a more general way by assuming that $S$ is a Lie hypersurface of $M$. Here, by Lie hypersurface we mean a codimension one orbit of a cohomogeneity one action on $M$ with no singular orbits. Lie hypersurfaces have been classified by Berndt and Tamaru~\cite{BT:jdg}, and they turn out to be congruent, precisely, to codimension one Lie subgroups of the solvable Iwasawa groups. However, unlike other ambient manifolds (such as Euclidean spaces, spheres or, more generally, irreducible symmetric spaces of compact type), the classification of cohomogeneity one actions (or, equivalently, homogeneous hypersurfaces) on symmetric spaces of non-compact type is still an outstanding open problem. Although there are partial classification results (see~\cite{BT:crelle}), and in particular Lie hypersurfaces have been classified~\cite{BT:jdg}, there is a remarkable richness of examples, and analyzing their geometry is usually a difficult problem; see~\cite{DDR:hhn} for a recent contribution. 

The proof of Theorem~\ref{th:symmetric} relies on the study of two different cases: rank one and higher rank. Whereas, as already mentioned, the higher rank cases require a careful analysis of the geometry of Lie hypersurfaces in terms of restricted root systems, the case of rank one symmetric spaces is subsumed into the investigation of a broader family of Einstein solvmanifolds, namely, Damek-Ricci harmonic spaces~\cite{BTV}. These are well-known solvable extensions of the so-called generalized Heisenberg groups (or $H$-type groups), which in turn constitute an important family of two-step nilpotent metric Lie groups. Roughly speaking (see~\S\ref{subsec:Heisenberg_prelim} for details), a generalized Heisenberg group is a simply connected Lie group $N$ with Lie algebra $\g{n}=\g{v}\oplus\g{z}$, where $\g{z}$ is the center of $\g{n}$, $\g{v}$ is a Clifford module over $\g{z}$, and $N$ is endowed with certain natural left-invariant metric. A Damek-Ricci space is a simply connected semidirect product  $AN$ with left-invariant metric, where $N$ is a generalized Heisenberg group and $A$ is a one-dimensional Lie group (see~\S\ref{subsec:DR_prelim}). 
The solvable Iwasawa groups of rank one symmetric spaces 
(apart from real hyperbolic spaces $\R \mathsf{H}^n$)
are, indeed, the only symmetric Damek-Ricci spaces. 

In line with our interest in codimension one in this paper, we will first derive the classification of codimension one Ricci soliton subgroups of generalized Heisenberg groups. We denote a generalized Heisenberg group by $N(m, k)$ or $N(m, k_+,k_-)$, 
where $m$ is the dimension of the center, 
and $k$ or $(k_+,k_-)$ represents the number of irreducible factors of the corresponding Clifford modules; we refer to \S\ref{subsec:Heisenberg_prelim} for precise definitions.
\begin{theo}\label{th:Heisenberg}
Let $S$ be a codimension one connected Lie subgroup of a generalized Heisenberg group $N$. Let $\g{s} = \g{n} \ominus \R \xi$ be the corresponding Lie subalgebra of $\g{n}$. Then $\xi \in \g{v}$, and $S$ is a Ricci soliton with respect to the induced metric if and only if $N$ is isometric to:
	\begin{enumerate}[\rm(i)]
		\item  $N(1,k)$ for some $k\geq 1$,
		\item  $N(2,1)$, 
		\item  $N(m,1,0)$, where $m\in\{3,7\}$, or
		\item  $N(m,1)$, where $m\in\{4,8\}$, and $\xi$ belongs to one of the two $m$-dimensional irreducible half-spin submodules of the $\Spin(m)$-module $\g{v}$.
	\end{enumerate}
\end{theo}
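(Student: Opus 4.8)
The plan is to compute the Ricci tensor of the codimension one subgroup $S$ directly in terms of the Clifford-module data of $N$, and then check the Ricci soliton equation $\Ric_S = c\,\id + D$ for some $c\in\R$ and some derivation $D$ of $\g{s}$. First I would establish the structural claim that $\xi\in\g{v}$. Since $\g{z}$ is the center of $\g{n}=\g{v}\oplus\g{z}$ and any codimension one subalgebra $\g{s}=\g{n}\ominus\R\xi$ must be a subalgebra, I would write $\xi = \xi_\g{v}+\xi_\g{z}$ with respect to the orthogonal splitting and argue that if $\xi_\g{v}\neq 0$ then brackets of $\g{s}$ with itself would be forced to contain a component along $\xi$ unless $\xi$ lies entirely in $\g{v}$; more precisely, the bracket $[\g{v},\g{v}]$ surjects onto $\g{z}$ in an $H$-type algebra, so removing a vector with a nonzero $\g{z}$-component cannot yield a subalgebra, forcing $\xi_\g{z}=0$. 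This reduces everything to the case $\xi\in\g{v}$, where $\g{s}=(\g{v}\ominus\R\xi)\oplus\g{z}$.

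Next I would compute the second fundamental form and the induced Ricci tensor of $S$. Using the Gauss equation together with the explicit curvature formulas for two-step nilpotent $H$-type groups (in terms of the Clifford maps $J_Z\colon\g{v}\to\g{v}$ defined by $\langle J_Z X,Y\rangle=\langle Z,[X,Y]\rangle$), I would express $\Ric_S$ as a block operator on $(\g{v}\ominus\R\xi)\oplus\g{z}$. The key quantities entering these blocks are the operators $J_Z$ restricted to $\g{v}\ominus\R\xi$ and, crucially, the ``defect'' terms measuring how the $J_Z\xi$ fail to stay inside the hyperplane $\g{v}\ominus\R\xi$. I expect the Ricci tensor to diagonalize nicely on $\g{z}$ but to acquire correction terms on the $\g{v}$-block governed by the quadratic form $Z\mapsto\langle J_Z\xi,J_{Z'}\xi\rangle$, i.e.\ by the way the fixed unit vector $\xi$ interacts with the Clifford structure.

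The main obstacle, and the heart of the proof, is the Ricci soliton equation itself: it demands that $\Ric_S - c\,\id$ be a derivation of $\g{s}$. I would impose the derivation condition on the off-diagonal and $\g{v}$-block contributions and translate it into algebraic constraints on the Clifford action of $\xi$. The decisive computation is to show that the correction terms are scalar (proportional to the identity on the relevant subspace) precisely in the listed cases. This is where the classification of $H$-type algebras by the dimension $m$ of the center and the number of irreducible Clifford submodules enters: the quantity $\sum_i \langle J_{Z_i}\xi, J_{Z_i}\xi\rangle$ and the symmetry of the map $\xi\mapsto J_\bullet\xi$ behave rigidly only for specific $m$ and specific positions of $\xi$ within the $\Spin(m)$-module $\g{v}$. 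I anticipate that for $m=1$ the Clifford structure is trivial enough that every choice of $\xi$ works (giving case (i)); for $m\in\{3,7\}$ with a single irreducible module the half-spin phenomenon degenerates and the unique orbit of $\xi$ works (case (iii)); for $m\in\{4,8\}$ the module splits into half-spin pieces and only $\xi$ lying in one such irreducible half-spin submodule yields a derivation (case (iv)); and $N(2,1)$ appears as a low-dimensional sporadic solution (case (ii)).

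To organize the case analysis, I would exploit the $\Spin(m)$-equivariance to reduce the choice of $\xi$ up to the isometry group of $N$ to finitely many orbit representatives, thereby making the verification of the derivation condition a finite check. Concretely, I would fix an orthonormal basis of $\g{z}$, diagonalize or normalize the commuting family $\{J_{Z_i}\}$ as far as the representation theory permits, and then reduce the soliton equation to the requirement that a single scalar eigenvalue condition holds uniformly across the spectrum of the $J_{Z_i}$ acting on $\xi$. The representation-theoretic input—namely which dimensions $m$ admit a Clifford module in which a unit vector $\xi$ has the requisite symmetric interaction with the whole family $\{J_{Z_i}\}$—is exactly what cuts down the admissible $(m,k)$ to the four families in the statement, and verifying that these and only these satisfy the derivation condition is the step I expect to require the most care.
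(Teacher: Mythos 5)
Your high-level strategy (reduce to the algebraic Ricci soliton condition, compute $\Ric$ of $S$ via the Gauss equation, and then analyse when $\Ric-c\id$ is a derivation using the Clifford/spin structure) is the same as the paper's, and your argument that $\xi\in\g{v}$ is essentially the one used there. However, there is a genuine gap in the step you yourself flag as decisive. You predict that the Ricci operator of $S$ carries non-scalar correction terms on the $\g{v}$-block governed by the form $(Z,Z')\mapsto\langle J_Z\xi,J_{Z'}\xi\rangle$, and that the soliton condition is the requirement that these corrections be scalar. But the $H$-type identities give $\langle J_Z\xi,J_{Z'}\xi\rangle=\langle Z,Z'\rangle\,\lvert\xi\rvert^2$ for \emph{every} unit $\xi\in\g{v}$, so this form is just the inner product on $\g{z}$ and carries no information about the position of $\xi$; correspondingly, the Ricci operator of $S$ is always block-scalar on the decomposition $\g{s}=(\g{J}\xi)^\perp\oplus\g{J}\xi\oplus\g{z}$, with eigenvalues $-m/2$, $(1-m)/2$ and $(n-2)/4$ independently of $N$ and of $\xi$. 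Your proposed ``decisive computation'' is therefore vacuously satisfied in all cases and cannot distinguish the four families in the statement.

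The actual content of the derivation condition is Lie-algebraic rather than spectral. Since $[\g{v},\g{v}]\subset\g{z}$ and $D=\Ric-c\id$ acts by three scalars, the identity $D[X,Y]=[DX,Y]+[X,DY]$ forces, for each of the three pairs of blocks $(\g{J}\xi,\g{J}\xi)$, $((\g{J}\xi)^\perp,(\g{J}\xi)^\perp)$ and $(\g{J}\xi,(\g{J}\xi)^\perp)$, either a specific value of $c$ or the vanishing of the corresponding bracket; the three required values of $c$ are pairwise distinct, so at least two of the conditions ``$\g{J}\xi$ abelian'', ``$(\g{J}\xi)^\perp$ abelian'', ``$[\g{J}\xi,(\g{J}\xi)^\perp]=0$'' must hold. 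It is this trichotomy, absent from your proposal, that drives the classification: abelianness of $(\g{J}\xi)^\perp$ forces each nonzero $J_Z$ to map $(\g{J}\xi)^\perp$ injectively into $\g{J}\xi$, hence $n\leq 2m+1$, which by the classification of Clifford modules leaves only $1\leq m\leq 8$ with $k=1$ and, for $m\in\{4,8\}$, the singular-orbit (half-spin) analysis you anticipate. Your appeal to $\Spin(m)$-equivariance to ``reduce to finitely many orbit representatives'' does not by itself bound $m$ or $k$. Finally, to pass between ``Ricci soliton'' and ``algebraic Ricci soliton'' you need Lauret's theorem that on a simply connected nilpotent Lie group these notions coincide; this should be invoked explicitly.
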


Some of the Ricci soliton nilmanifolds $S$ obtained above would be interesting also from an intrinsic viewpoint. 
The most complicated one is the codimension one subgroup $S$ in $N(8,1)$. 
In this case $S$ is a two-step nilpotent $23$-dimensional Lie group with $8$-dimensional center, 
which would have not been considered in the literature as far as the authors know.

The analysis developed in the proof of Theorem~\ref{th:Heisenberg} will then be useful to obtain the classification in Damek-Ricci spaces.

\begin{theo}\label{th:DR}
	Let $S$ be a codimension one connected Lie subgroup of a Damek-Ricci space $AN$. Then $S$ is a Ricci soliton with respect to the induced metric if and only if $S=N$ is the generalized Heisenberg group associated with $AN$, or $AN$ is isometric to a complex hyperbolic plane $\C \mathsf{H}^2$ and $S$ is the Lohnherr hypersurface $W^3$.
\end{theo}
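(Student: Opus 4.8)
The plan is to reduce Theorem~\ref{th:DR} to the already-established Theorem~\ref{th:Heisenberg} by exploiting the structure of a Damek-Ricci space $AN$ as a one-dimensional solvable extension of its generalized Heisenberg group $N$. A codimension one connected subgroup $S\subset AN$ corresponds to a codimension one subalgebra $\g{s}=(\g{a}\oplus\g{n})\ominus\R\xi$ for some unit vector $\xi$. The first step is a case split according to the position of $\xi$ relative to the reductive direction $\g{a}$. Writing $\xi=\cos\theta\, B+\sin\theta\, X$ with $B$ a unit vector spanning $\g{a}$ and $X\in\g{n}$ a unit vector, I would distinguish the two qualitatively different situations: when $\xi\in\g{n}$ (so $S$ contains $\g{a}$), and when $\xi$ has a nonzero $\g{a}$-component.

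In the first case, $\g{s}=\g{a}\oplus(\g{n}\ominus\R\xi)$, so $S$ is a one-dimensional solvable extension of the codimension one subgroup of $N$ with algebra $\g{n}\ominus\R\xi$. Here I would use the fact that the $\g{a}$-action on $\g{n}$ is, up to scaling, the grading derivation acting as $\tfrac12\id$ on $\g{v}$ and $\id$ on $\g{z}$. One then computes the Ricci tensor of $S$ directly, using the Lie-theoretic description of the Levi-Civita connection together with the (known) formulas for Damek-Ricci spaces and their codimension one subgroups. The key geometric input is that the only way such an $S$ can be a solvsoliton is governed by the same nilsoliton condition on $\g{n}\ominus\R\xi$ analyzed in Theorem~\ref{th:Heisenberg}, modified by the effect of the extra direction $\g{a}$. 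I expect the outcome to be that, because the ambient Damek-Ricci space is Einstein, the extension is "compatible" only in very restricted circumstances, forcing either $S=N$ (i.e.\ $\xi\in\g{a}$, the excluded boundary case that in fact yields the generalized Heisenberg group) or the specific $\C\mathsf{H}^2$ situation.

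In the second case, where $\xi$ has a genuine $\g{a}$-component, the subalgebra $\g{s}$ is no longer a semidirect product with $\g{a}$ as a factor, and the geometry is more delicate; this is where I expect the main obstacle to lie. The strategy is to compute the Ricci operator of $S$ as a hypersurface of $AN$ via the Gauss equation, using that $AN$ is Einstein (so its intrinsic Ricci tensor is a known multiple of the metric) together with the shape operator of $S$. The shape operator is determined by the second fundamental form, which in turn is read off from the brackets $[\xi,\cdot]$ and the projection onto $\R\xi$. I would then impose the Ricci soliton equation $\Ric_S=c\,\id+D$ for some $c\in\R$ and some derivation $D$ of $\g{s}$, and analyze the resulting algebraic constraints. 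The delicate point is that the Ricci soliton condition on a solvmanifold is equivalent to the solvsoliton condition, which involves the structure of $\g{s}$ as an extension of its own nilradical; disentangling how a "tilted" $\xi$ interacts with the Clifford-module structure of $\g{v}$ and the center $\g{z}$ is the core computation.

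The main obstacle will be controlling the Ricci curvature when $\xi$ mixes the $\g{a}$, $\g{v}$, and $\g{z}$ directions, since the off-diagonal brackets couple these subspaces and the shape operator acquires cross terms. I anticipate handling this by first showing, via the mean curvature and the trace of the squared second fundamental form, that a tilted $\xi$ produces a nonzero Weyl-type obstruction to the solvsoliton equation except in the low-dimensional Clifford module cases already singled out in Theorem~\ref{th:Heisenberg}. Concretely, I would argue that if $S$ is a solvsoliton then its nilradical must itself be a nilsoliton, restricting the possible Clifford modules, and then check the extension direction case by case against the list (i)--(iv) of Theorem~\ref{th:Heisenberg}. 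Since among those only the $N(1,1)$-type case (corresponding to $\C\mathsf{H}^2$) admits a compatible codimension one extension giving the Lohnherr hypersurface $W^3$, and the $\xi\in\g{a}$ boundary always gives $S=N$, this yields exactly the two alternatives in the statement. The final step is to verify directly that the Lohnherr hypersurface $W^3\subset\C\mathsf{H}^2$ is indeed a Ricci soliton, which is consistent with item~(ii) of Theorem~\ref{th:symmetric}.
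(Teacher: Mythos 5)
Your overall framework (reduce to the algebraic Ricci soliton condition by complete solvability, compute $\Ric$ of $S$ via the Gauss equation, split cases according to the position of $\xi$, and feed the analysis of Theorem~\ref{th:Heisenberg} back in) is the same as the paper's, but the decisive step is missing. Your plan is to invoke the fact that the nilradical of a solvsoliton is a nilsoliton, thereby restricting to the list (i)--(iv) of Theorem~\ref{th:Heisenberg}, and then to ``check the extension direction case by case,'' asserting that only $N(1,1)$ survives. That check is precisely where all the content lies, and the nilsoliton condition on the nilradical is nowhere near strong enough to carry it out: \emph{every} case (i)--(iv) has a nilsoliton nilradical by construction, yet all of them except $N(1,1)$ fail to extend. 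The mechanism that kills them, which your proposal does not identify, is that the Damek--Ricci extension rigidly determines the soliton constant. Applying the derivation identity to the bracket of the $\g{a}$-direction with $\g{z}$ forces $c=(1+4\wt{m})/4$ (Proposition~\ref{prop:DR_2cases}), after which $D=\Ric-c\id$ has the fixed eigenvalues $0$, $1/2$, $-1/4$ on $\g{a}\oplus(\g{J}\xi)^\perp$, $\g{J}\xi$, $\g{z}$. Since no two of these sum to $-1/4$, the derivation property forces \emph{all three} of the conditions of Proposition~\ref{prop:Heisenberg_charact} simultaneously ($\g{J}\xi$ abelian, $(\g{J}\xi)^\perp$ abelian, $[\g{J}\xi,(\g{J}\xi)^\perp]=0$), whereas in the Heisenberg setting only two of three were needed. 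Propositions~\ref{prop:bracket_0} and~\ref{prop:perp_abelian} then give $m=1$ and $k=1$, i.e.\ $\C\mathsf{H}^2$. This explains, for instance, why $N(1,k)$ with $k\geq 2$ (where $(\g{J}\xi)^\perp$ is not abelian) and $N(m,1,0)$, $m\in\{3,7\}$ (where $\g{J}\xi$ is not abelian) produce nilsoliton hypersurfaces of $N$ but not solvsoliton hypersurfaces of $AN$; your argument as stated cannot distinguish these from $\C\mathsf{H}^2$.

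Two smaller gaps: you never verify that $\xi$ has no $\g{z}$-component, which is needed for $\g{s}=(\g{a}\oplus\g{n})\ominus\R\xi$ to be a subalgebra at all (the paper gets this from $[J_ZU,|Z|^2U-|U|^2Z]=-|Z|^2|U|^2Z$ together with $[\g{v},\g{v}]=\g{z}$); and the exclusion of a ``tilted'' $\xi$ with both $\g{a}$- and $\g{v}$-components is only announced as an expected obstruction, whereas the paper extracts it concretely from the $J_ZU$-component of the derivation identity applied to $[\lvert U\rvert^2B-aU,Z]$. These are fillable along the lines you sketch, but the first paragraph's gap is structural: without pinning down $c$, the case-by-case elimination you rely on has no engine.
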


The proofs of Theorems~\ref{th:symmetric}, \ref{th:Heisenberg} and~\ref{th:DR} rely on the same simple underlying idea: use the Gauss equation of submanifold geometry to calculate the Ricci operator of an arbitrary codimension one subgroup, and then decide when the induced metric of such subgroup is an algebraic Ricci soliton. We recall that a left-invariant metric $g$ on a Lie group $S$ is an algebraic Ricci soliton if there exist a derivation $D\in\Der(\g{s})$ and a real number $c$ such~that
\[
\Ric=c \id + D,
\] 
where $\Ric$ is the $(1,1)$-Ricci tensor of $(S,g)$. 
Algebraic Ricci solitons on simply connected groups are Ricci solitons and, indeed, all known examples of expanding homogeneous Ricci solitons are isometric to simply connected solvsolitons; we recall that a solvsoliton (resp.\ nilsoliton) is, precisely, a solvable (resp.\ nilpotent) Lie group $S$ with an algebraic Ricci soliton metric. Various fundamental results have been proved over the last few years regarding the relation between homogeneous Ricci solitons and algebraic Ricci solitons (see for example~\cite{Jab:gt, Jab:crelle, LafuenteLauret:jdg, Lauret:mathann, Lauret:crelle}). In particular, it is known that any left-invariant Ricci soliton metric on a completely solvable (or in particular,  nilpotent) Lie group is necessarily a solvsoliton.  
Since all codimension one subgroups $S$ studied in this paper are completely solvable, 
we can reduce our problem of determining when $S$
is a Ricci soliton (with the induced left-invariant metric) to the analysis of the algebraic Ricci soliton condition on $S$. 

As mentioned above, the first basic step in our arguments is to calculate the Ricci operator of each codimension one subgroup $S$. This is done via the Gauss equation in terms of the shape operator and the normal Jacobi operator of $S$. Whereas for generalized Heisenberg groups and Damek-Ricci spaces (Theorems~\ref{th:Heisenberg} and~\ref{th:DR}) this is a relatively straightforward task, dealing with symmetric spaces (Theorem~\ref{th:symmetric}) is much more involved, as this requires a meticulous inspection of the extrinsic geometry of $S$ in terms of the root space decomposition of the isometry Lie algebra of the symmetric space.
Once we have calculated the Ricci operator $\Ric$ of $S$, the classifications are reduced to the determination of the conditions for which the operator $\Ric -c\id$ is a derivation of $\g{s}$, for some $c\in\R$. This is a purely algebraic but non-trivial problem which requires different approaches depending on the ambient space where $S$ lives. Thus, for generalized Heisenberg groups and Damek-Ricci spaces we develop an analysis involving Clifford modules and spin representations, whereas for symmetric spaces we are led again to arguments involving root spaces.


This article is structured as follows. In Section~\ref{sec:prelim} we basically fix some notation and recall some well-known facts concerning Riemannian submanifold geometry. Section~\ref{sec:Heisenberg} is devoted to the study of Ricci soliton hypersurfaces in generalized Heisenberg groups and the proof of Theorem~\ref{th:Heisenberg}. Section~\ref{sec:DR} deals with Damek-Ricci spaces and contains the proof of Theorem~\ref{th:DR}. Finally, in Section~\ref{sec:symmetric} we investigate Ricci soliton Lie hypersurfaces in symmetric spaces of non-compact type, and derive the proof of Theorem~\ref{th:symmetric}. Each one of these three main sections contains a subsection with preliminaries on generalized Heisenberg groups, Damek-Ricci spaces and symmetric spaces of non-compact type, respectively.

\medskip

\textbf{Acknowledgments.} The authors would like to thank Eduardo Garc\'ia-R\'io and Alberto Rodr\'iguez-V\'azquez for some helpful comments.

\section{Preliminaries}\label{sec:prelim}
In this short section we introduce some basic notation and facts concerning Riemannian geometry of hypersurfaces.
 
Let $M$ be a Riemannian manifold with metric $\langle\cdot,\cdot\rangle$ and Levi-Civita connection $\nabla$. We will adopt the sign convention $R^M(X,Y)Z=\nabla_X\nabla_Y Z-\nabla_Y\nabla_X Z-\nabla_{[X,Y]}Z$ for the definition of the curvature tensor $R^M$ of $M$, where $X$, $Y$, $Z$ are smooth vector fields on $M$. We will denote the Ricci operator of $M$ by $\Ric^M$.

Now, let $S$ be a hypersurface of the Riemannian manifold $M$. Let $\xi$ be a smooth unit normal vector field on (an open subset of) $S$. The shape operator of $S$ is the endomorphism $\Ss_\xi$ of $TS$ given by $\Ss_\xi X:=-(\nabla_X\xi)^\top=-\nabla_X\xi$, for each $X\in TS$, where ${}^\top$ denotes orthogonal projection onto the tangent space of $S$. The shape operator is a self-adjoint endomorphism of the tangent space of $S$. The principal curvatures of $S$ are precisely the (real) eigenvalues of $\Ss_\xi$, and the mean curvature of $S$ is the trace $\tr\Ss_\xi$. Another important object which relates the geometry of the ambient space $M$ with that of the hypersurface $S$ is the Jacobi operator of $S$, defined as the endomorphism $R_\xi$ of $TS$ given by $R_\xi(X):=R^M(X,\xi)\xi$, for each $X\in TS$. Again, the Jacobi operator is self-adjoint.

The extrinsic geometry of a submanifold is controlled by the fundamental equations of submanifold geometry. One of these relations is Gauss equation, which, for a hypersurface $S$ of $M$ as above, can be written as
\[
\langle R^M(X,Y)Z,W\rangle=\langle R(X,Y)Z,W\rangle-\langle \Ss_\xi Y,Z\rangle\langle \Ss_\xi X,W\rangle
+\langle\Ss_\xi X,Z\rangle\langle \Ss_\xi Y,W\rangle,
\]
for tangent vectors $X$, $Y$, $Z$, $W$ to $S$, and where $R$ is the curvature tensor of $S$. From this equation, one can easily derive the following expression for the Ricci operator $\Ric$ of $S$ in terms of the Ricci operator of $M$ and the shape and Jacobi operators of $S$:
\begin{equation}\label{eq:Ric_Gauss}
\Ric =(\Ric^M\rvert_{TS})^\top+\tr(\Ss_\xi) \Ss_\xi-\Ss_\xi^2-R_\xi.
\end{equation}

In this article we will work with some particular types of homogeneous hypersurfaces in different ambient spaces. Recall that by a homogeneous hypersurface we understand a codimension one orbit $S$ of an isometric action of a (connected) Lie group on an ambient space $M$. Homogeneous hypersurfaces have constant principal curvatures with constant multiplicities, and the same happens with the eigenvalues of the Jacobi operator. In this paper, $M$ will always be isometric to a Lie group with a left-invariant metric, and $S$ a codimension one Lie subgroup of $M$. In this setting, the tangent space of $S$ at each point is spanned by the left-invariant vector fields in the Lie algebra $\g{s}$ of $S$, and hence $\xi$ is a left-invariant unit normal vector field globally defined on $S$.

\section{Nilsoliton hypersurfaces in generalized Heisenberg groups}\label{sec:Heisenberg}
In this section we determine which codimension one Lie subgroups of generalized Heisenberg groups are Ricci solitons with the induced metric. In \S\ref{subsec:Heisenberg_prelim} we recall from~\cite{BTV} the definition and basic facts about generalized Heisenberg groups, whereas in \S\ref{subsec:Heisenberg_proof} we prove the classification result contained in Theorem~\ref{th:Heisenberg}.

\subsection{Generalized Heisenberg groups}\label{subsec:Heisenberg_prelim}\hfill

Let $\g{n}=\g{v}\oplus\g{z}$ be a Lie algebra equipped with a positive definite inner product $\langle \cdot,\cdot \rangle$ such that  $\langle \g{v},\g{z}\rangle=0$, and whose Lie bracket satisfies $[\g{v},\g{v}]\subset\g{z}$ and $[\g{n},\g{z}]=0$. In this section we will assume that $\g{v}\neq 0\neq\g{z}$, since otherwise $\g{n}$ would be an abelian Lie algebra. Define a linear map $J\colon \g{z}\to \mathrm{End}(\g{v})$ by
\[
\langle J_Z U, V\rangle=\langle [U,V],Z\rangle, \qquad \text{for all $U$, $V\in\g{v}$, $Z\in\g{z}$.}
\]
Then, the two-step nilpotent Lie algebra $\g{n}$ is called a \emph{generalized Heisenberg algebra}  or \emph{H-type algebra} if
\[
J_Z^2=-\langle Z,Z\rangle \id, \qquad \text{for all $Z\in\g{z}$.}
\]
The simply connected nilpotent Lie group $N$ with Lie algebra $\g{n}$, equipped with the left-invariant metric induced by $\langle \cdot,\cdot \rangle$, is called a \emph{generalized Heisenberg group} or \emph{H-type group}. 

The map $J$ induces a representation of the Clifford algebra $\Cl(m)=\Cl(\g{z},q)$ on $\g{v}$, where $m=\dim\g{z}$ and $q=-\langle\cdot,\cdot\rangle\rvert_{\g{z}\times\g{z}}$ is a negative definite quadratic form. 
Conversely, a representation of such a Clifford algebra induces a map $J$ as above and, hence, a generalized Heisenberg group. 
Therefore, the classification of these metric Lie groups follows from the classification of Clifford modules. If $m\not\equiv 3 \mod{ 4}$, then there exists exactly one irreducible Clifford module $\g{d}$ over $\Cl(m)$, up to equivalence, and each Clifford module over $\Cl(m)$ is isomorphic to $\g{v}\cong\oplus^k \g{d}$. We will denote the corresponding generalized Heisenberg group by $N(m, k)$. If $m\equiv 3 \mod 4$, then there are exactly two irreducible Clifford modules, $\g{d}_+$ and $\g{d}_-$, over $\Cl(m)$, up to equivalence; they satisfy $\dim\g{d}_+=\dim\g{d}_-$, and each Clifford module over $\Cl(m)$ is isomorphic to $\g{v}\cong\bigl(\oplus^{k_+}\g{d}_+\bigr)\oplus\bigl(\oplus^{k_-}\g{d}_-)$. We will denote the corresponding group by $N(m,k_+,k_-)$. In any case, we will denote by $k$ the number of irreducible $\Cl(m)$-submodules in $\g{v}$; thus, $k=k_++k_-$ if $m\equiv 3 \mod 4$. In particular, if $n=\dim \g{v}$, we have  $n=k\dim\g{d}_{(\pm)}$.
As vector spaces, the irreducible modules $\g{d}$ or $\g{d}_{\pm}$ are isomorphic to the vector spaces shown in Table~\ref{table:delta}, where the explicit dimensions of $\g{v}$ are also included. Moreover, if $m\equiv 3 \mod 4$, then $N(m,k_+,k_-)$ is isometric to $N(m,k_+',k_-')$ if and only if $(k_+',k_-')\in\{(k_+,k_-),(k_-,k_+)\}$.

\begin{table}[h]
	\renewcommand{\arraystretch}{1.5}
	\begin{tabular}{ccccccccc}
		\hline
		$m$ & $8p$ & $8p+1$ & $8p+2$ & $8p+3$ & $8p+4$ & $8p+5$ & $8p+6$ & $8p+7$
		\\ \hline
		$\g{d}_{(\pm)}$ & $\R^{2^{4p}}$ & $\C^{2^{4p}}$ & $\H^{2^{4p}}$ & $\H^{2^{4p}}$
		& $\H^{2^{4p+1}}$ & $\C^{2^{4p+2}}$ & $\R^{2^{4p+3}}$ & $\R^{2^{4p+3}}$
		\\
		\hline
		$n$ & $2^{4p}k$ & $2^{4p+1}k$ & $2^{4p+2}k$ & $2^{4p+2}k$ & $2^{4p+3}k$ & $2^{4p+3}k$ & $2^{4p+3}k$ & $2^{4p+3}k$ 
		\\
		\hline
	\end{tabular}
	\bigskip
	\caption{Clifford modules.}\label{table:delta}
	\vspace{-2ex}
\end{table}

Let $U,V\in\g{v}$ and $X,Y\in\g{z}$. Then the following relations hold:
\begin{equation}\label{eq:inner_gHg}
\langle J_X U,J_X V\rangle =\lvert X\rvert^2 \langle U,V\rangle,\qquad \langle J_X U,J_Y U\rangle =\langle X,Y\rangle \lvert U\rvert^2, \qquad [U,J_X U]=\vert U\vert^2 X.
\end{equation} 
In particular, the first and third equalities imply that $J_X$ is an orthogonal complex structure on $\g{v}$ for any unit $X\in\g{z}$, and $[\g{v}, \g{v}] = \g{z}$. Finally, we recall the formula of the Levi-Civita connection of a generalized Heisenberg group,
\begin{equation}\label{eq:LC_H}
\nabla_{V+Y}(U+X)=-\frac{1}{2}J_XV-\frac{1}{2}J_YU-\frac{1}{2}[U,V],
\end{equation}
which will be fundamental in our work, as well as 
the Ricci operator $\Ric^N$ of a generalized Heisenberg group $N$,
\begin{equation}\label{eq:ricci_gHg}
\Ric^N\vert_\g{v}=-\frac{m}{2}\id, \qquad \Ric^N\vert_\g{z}=\frac{n}{4}\id.
\end{equation}
We refer to~\cite[Chapter~3]{BTV} for more details on these facts. We conclude this subsection by observing that any generalized Heisenberg group is an algebraic Ricci soliton. In fact, it is straightforward that the endomorphism $D$ of $\g{n}=\g{v}\oplus\g{z}$ determined by $D\rvert_\g{v}=\id$ and $D\rvert_\g{z}=2\id$ is a derivation of $\g{n}$, and $\Ric^N=\wt{D}+c\id$ with $\wt{D}=(n/4+m/2)D$ and $c=-m-n/4$.

\subsection{Proof of the classification result}\label{subsec:Heisenberg_proof}\hfill

Let $S$ be a connected Lie subgroup of codimension one of a generalized Heisenberg group~$N$. Let $\g{s}=\g{n}\ominus\R \xi$ be the corresponding Lie subalgebra of  $\g{n}=\g{v}\oplus\g{z}$, where $\ominus$ denotes (here and henceforth) orthogonal complement, and $\xi=W+Z\in\g{n}$ is a unit vector with $W\in \g{v}$, $Z\in\g{z}$. Then $J_Z W$, $|Z|^2W-|W|^2Z\in\g{s}$, and hence by~\eqref{eq:inner_gHg}
\begin{equation}\label{eq:xi_H_DR}
[J_Z W,|Z|^2W-|W|^2Z]=-|Z|^2 |W|^2 Z\in\g{s}.
\end{equation}
This, together with $[\g{v},\g{v}]=\g{z}$, implies that $\xi\in\g{v}$. Conversely, if $\xi \in \g{v}$, then one can easily see that 
$\g{s}=\g{n} \ominus \R \xi = (\g{v} \ominus \R \xi) \oplus \g{z}$ is a Lie subalgebra of $\g{n}$.

In what follows, we will use the following notation:
\[
\g{J}=\{J_Z:Z\in\g{z}\},\qquad \g{J}\xi=\{J_Z\xi:Z\in\g{z}\},\qquad \text{and}\qquad (\g{J}\xi)^\perp=\g{v}\ominus(\R\xi\oplus\g{J}\xi).
\]
Note that we have the orthogonal direct sum decomposition $\g{s}=\g{J}\xi\oplus(\g{J}\xi)^\perp\oplus\g{z}$, where $\dim\g{J}\xi=\dim\g{z}=m$ and $\dim(\g{J}\xi)^\perp=n-m-1$.

We start by calculating the shape operator $\Ss_\xi$ of  the hypersurface $S$ of $N$ by using \eqref{eq:LC_H} along with the relations~\eqref{eq:inner_gHg}:
\begin{align*}
\Ss_\xi U&=-\nabla_U\xi=\frac{1}{2}[\xi,U]=0, &\qquad &\text{for any } U\in(\g{J}\xi)^\perp,
\\
\Ss_\xi J_Z\xi &= -\nabla_{J_Z\xi}\xi=\frac{1}{2}[\xi,J_Z\xi]=\frac{1}{2}Z,& \qquad &\text{for any }Z\in\g{z},\, J_Z\xi\in \g{J}\xi, 
\\
\Ss_\xi Z&=-\nabla_Z \xi=\frac{1}{2}J_Z\xi, &\qquad &\text{for any } Z\in\g{z}.
\end{align*}
Hence, both $(\g{J}\xi)^\perp$ and $\g{J}\xi\oplus\g{z}$ are invariant under $\Ss_\xi$ and, moreover,
\begin{equation}\label{eq:shape_gHg}
\tr (\Ss_\xi)=0, \qquad \Ss_\xi^2\vert_{(\g{J}\xi)^\perp}=0, \qquad  \Ss_\xi^2\vert_{\g{J}\xi\oplus\g{z}}=\frac{1}{4}\id.
\end{equation}

Now, we calculate the normal Jacobi operator $R_\xi=R^N(\cdot,\xi)\xi$ of $S$ by combining the definition of the curvature tensor of the ambient space, $R^N(X,Y)=[\nabla_X,\nabla_Y]-\nabla_{[X,Y]}$, with the formula~\eqref{eq:LC_H} for the Levi-Civita connection, or alternatively by using the formula for the Jacobi operator of a generalized Heisenberg group~\cite[\S3.1.8]{BTV}. In any case, we get
\begin{equation}\label{eq:Jacobi_gHg}
R_\xi U=0, \qquad R_\xi J_Z\xi=-\frac{3}{4}J_Z\xi, \qquad R_\xi Z=\frac{1}{4}Z,
\end{equation}
for every $U\in (\g{J}\xi)^\perp$ and $Z\in \g{z}$.

Inserting \eqref{eq:ricci_gHg}, \eqref{eq:shape_gHg} and~\eqref{eq:Jacobi_gHg} into~\eqref{eq:Ric_Gauss}, we get that $(\g{J}\xi)^\perp$, $\g{J}\xi$ and $\g{z}$ are invariant subspaces for the Ricci operator $\Ric$ of the hypersurface $S$. Moreover $\Ric$ is given by
\begin{equation}\label{eq:ricci'}
\Ric\rvert_{(\g{J}\xi)^\perp}=-\frac{m}{2}\id, \qquad \Ric\rvert_{\g{J}\xi}=\frac{1-m}{2}\id,\qquad
\Ric\rvert_{\g{z}}=\frac{n-2}{4}\id.
\end{equation}

At this point, we will organize the arguments towards the proof of Theorem~\ref{th:Heisenberg} into several propositions that will allow us to analyze in which cases $S$ is an algebraic Ricci soliton. Thus, we start with the following result.

\begin{proposition}\label{prop:Heisenberg_charact}
	$S$ is an algebraic Ricci soliton if and only if at least two of the following three conditions hold:  $\g{J}\xi$ is abelian, $(\g{J}\xi)^\perp$ is abelian, and $[\g{J}\xi,(\g{J}\xi)^\perp]=0$.
\end{proposition}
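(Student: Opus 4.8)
The plan is to test directly when $D:=\Ric-c\,\id$ is a derivation of $\g{s}$, exploiting that, by~\eqref{eq:ricci'}, $\Ric$ (and hence $D$) acts as a scalar on each of the three blocks $\g{J}\xi$, $(\g{J}\xi)^\perp$ and $\g{z}$. Writing $\lambda_1=\frac{1-m}{2}$, $\lambda_2=-\frac{m}{2}$, $\lambda_3=\frac{n-2}{4}$ for the respective eigenvalues of $\Ric$, the map $D$ acts as $\mu_i:=\lambda_i-c$ on the $i$-th block. The task is then to decide for which $c\in\R$ the Leibniz rule $D[X,Y]=[DX,Y]+[X,DY]$ holds for all $X,Y\in\g{s}$.

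First I would reduce this to finitely many scalar conditions using the two-step nilpotent structure. Since every bracket of elements of $\g{s}$ lies in the central ideal $\g{z}$, and $\g{z}$ is central in $\g{n}$, the Leibniz rule is automatic whenever one of the arguments belongs to $\g{z}$. For the remaining cases $[X,Y]\in\g{z}$ forces $D[X,Y]=\mu_3[X,Y]$, so the rule reduces to $(\mu_3-2\mu_1)[X,Y]=0$ for $X,Y\in\g{J}\xi$, to $(\mu_3-2\mu_2)[X,Y]=0$ for $X,Y\in(\g{J}\xi)^\perp$, and to $(\mu_3-\mu_1-\mu_2)[X,Y]=0$ for $X\in\g{J}\xi$, $Y\in(\g{J}\xi)^\perp$. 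Consequently $D$ is a derivation precisely when, for each of these three block-pairs, either the corresponding bracket vanishes identically---that is, $\g{J}\xi$ is abelian, $(\g{J}\xi)^\perp$ is abelian, or $[\g{J}\xi,(\g{J}\xi)^\perp]=0$, respectively---or the accompanying scalar coefficient vanishes. Solving each coefficient equation for $c$ yields exactly one value: $c_A=2\lambda_1-\lambda_3$, $c_B=2\lambda_2-\lambda_3$, and $c_C=\lambda_1+\lambda_2-\lambda_3$.

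The heart of the argument, and the step I expect to be the main obstacle, is the elementary but decisive observation that $c_A$, $c_B$, $c_C$ are pairwise distinct. Indeed, from the values of $\lambda_1,\lambda_2,\lambda_3$ one computes $\lambda_1-\lambda_2=\frac12$, whence the pairwise differences among $c_A,c_B,c_C$ equal $\pm\frac12$ and $1$, all nonzero regardless of $m$ and $n$. Thus a single choice of $c$ can satisfy at most one of the three scalar conditions through its coefficient; the other two must then be met by the vanishing of the corresponding brackets.

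Finally I would assemble the classification by a short counting argument. If at least two of the three bracket-vanishing conditions hold, then the at most one unmet condition can be forced by setting $c$ equal to its critical constant, so $D$ is a derivation for a suitable $c$ and $S$ is an algebraic Ricci soliton; in particular any $c$ works when all three hold. Conversely, if two or more of the bracket-vanishing conditions fail, satisfying the Leibniz rule would require $c$ to equal two distinct critical constants simultaneously, which is impossible by the distinctness above, so no such $c$ exists. This establishes that $S$ is an algebraic Ricci soliton if and only if at least two of the three stated conditions hold. Beyond the distinctness computation, everything reduces to routine bookkeeping of the Leibniz rule across the three blocks.
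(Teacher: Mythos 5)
Your proposal is correct and follows essentially the same route as the paper: both reduce the derivation condition for $D=\Ric-c\id$ to the three block-pairs inside $\g{J}\xi\oplus(\g{J}\xi)^\perp$ (the cases involving $\g{z}$ being automatic by centrality), obtain for each pair a dichotomy ``bracket vanishes or $c$ equals a specific critical value,'' and conclude from the pairwise distinctness of the three critical values $(6-4m-n)/4$, $(2-4m-n)/4$, $(4-4m-n)/4$ that at most one condition can be absorbed by the choice of $c$. Your explicit verification that the differences are $\pm\tfrac12$ and $1$ merely makes precise what the paper leaves implicit.
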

\begin{proof}
	First note that $S$ is an algebraic Ricci soliton if and only if there exists $c\in\R$ such that the endomorphism $D:=\Ric -c \id$ of $\g{s}$, which in view of~\eqref{eq:ricci'} is given by 
	\begin{equation}
	D\rvert_{(\g{J}\xi)^\perp}=-\left(\frac{m}{2}+c\right)\id, \qquad D\rvert_{\g{J}\xi}=\left(\frac{1-m}{2}-c\right)\id,\qquad
	D\rvert_{\g{z}}=\left(\frac{n-2}{4}-c\right)\id,
	\end{equation}
	is a derivation of $\g{s}$, that is, for any $U,V\in\g{s}$ it satisfies
	\begin{equation}\label{eq:derivation}
	D[U,V]=[DU,V]+[U,DV].
	\end{equation}
	
	Given any $U$, $V\in\g{J}\xi$, we have
	\[D[U,V]=\left(\frac{n-2}{4}-c\right)[U,V],\qquad [DU,V]=[U,DV]=\left(\frac{1-m}{2}-c\right)[U,V].
	\]
	Then \eqref{eq:derivation} holds for every $U,V\in\g{J}\xi$ if and only if $c=(6-4m-n)/4$ or $\g{J}\xi$ is abelian. Arguing similarly, we deduce that \eqref{eq:derivation} holds for every $U,V\in(\g{J}\xi)^\perp$ if and only if $c=(2-4m-n)/4$ or $(\g{J}\xi)^\perp$ is abelian; and \eqref{eq:derivation} holds for every $U\in\g{J}\xi$ and $V\in(\g{J}\xi)^\perp$ if and only if $c=(4-4m-n)/4$ or $[\g{J}\xi,(\g{J}\xi)^\perp]= 0$. 
	
	Therefore, if $S$ is an algebraic Ricci soliton (i.e.\ $D\in\Der(\g{s})$), then at least~two~of the conditions in the statement hold (since otherwise we would get two different values for~$c$).
	
	Conversely, since $\g{z}$ is the center of $\g{n}$, and $D$ leaves both $\g{v}\ominus\R\xi$ and $\g{z}$ invariant, it turns out that, for a fixed $c\in\R$, $D\in\Der(\g{s})$ if and only if \eqref{eq:derivation} holds for all $U$, $V\in\g{J}\xi\oplus(\g{J}\xi)^\perp$. But, in view of the equivalences in the previous paragraph, if at least two of the conditions in the statement are satisfied, then there exists $c\in\R$ satisfying \eqref{eq:derivation} for all $U$, $V\in\g{J}\xi\oplus(\g{J}\xi)^\perp$.
\end{proof}

Note that, from the definition of $J\colon\g{z}\to\mathrm{End}(\g{v})$, we deduce that, given $U$, $V\in\g{v}$,~we~have $[U,V]=0$ if and only if $\langle J_Z U,V\rangle=0$ for all $Z\in\g{z}$. Hence, a subspace $\g{w}$ of $\g{v}$ is abelian if and only if $\g{J}\g{w}\perp \g{w}$. It will be helpful to take this remark into account in what follows.

\begin{proposition}\label{prop:perp_abelian}
	$(\g{J}\xi)^\perp$ is abelian if and only if one of the following conditions holds:
	\begin{enumerate}[\rm(i)]
		\item $\dim (\g{J}\xi)^\perp\in\{0,1\}$, 
		\item $N$ is isomorphic to $N(m,1)$, where $m\in\{4,8\}$, and $\xi$ belongs to one of the two $m$-dimensional irreducible half-spin submodules of the $\Spin(m)$-module $\g{v}$.
	\end{enumerate}
Moreover, in case \emph{(ii)}, $\g{J}\xi$ is abelian.
\end{proposition}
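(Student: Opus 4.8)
The plan is to first recast the abelian condition into a statement about the single subspace $\g{w}:=\g{v}\ominus\g{J}\xi=\R\xi\oplus(\g{J}\xi)^\perp$, then cut the problem down to finitely many Heisenberg groups by a dimension count, and finally dispatch those using the spin structure on $\g{v}$. Using the remark preceding the proposition ($\g{w}'\subseteq\g{v}$ is abelian iff $\g{J}\g{w}'\perp\g{w}'$) together with the skew-symmetry of each $J_Z$, I would first note that $\langle J_ZU,\xi\rangle=-\langle U,J_Z\xi\rangle=0$ for $U\in(\g{J}\xi)^\perp$, so that $(\g{J}\xi)^\perp$ is abelian if and only if $J_Z(\g{J}\xi)^\perp\subseteq\g{J}\xi$ for all $Z\in\g{z}$; equivalently, $\g{w}=\g{v}\ominus\g{J}\xi$ (which contains $\xi$) is abelian. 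In case (i), where $\dim(\g{J}\xi)^\perp\in\{0,1\}$, abelianity is automatic, so one implication is free and the content lies in the case $\dim(\g{J}\xi)^\perp\ge 2$.

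Next I would reduce to a finite list. Applying a single unit $J_Z$, the inclusion $J_Z(\g{J}\xi)^\perp\subseteq\g{J}\xi$ compares spaces of dimensions $n-m-1$ and $m$, forcing $n\le 2m$. A direct inspection of Table~\ref{table:delta} then shows that the only generalized Heisenberg groups with $m+3\le n\le 2m$ (that is, $\dim(\g{J}\xi)^\perp\ge 2$ and $n\le 2m$) are $N(4,1)$, $N(5,1)$ and $N(8,1)$, with $\dim(\g{J}\xi)^\perp$ equal to $3$, $2$ and $7$ respectively. This leaves exactly three cases.

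For the even cases $m\in\{4,8\}$ I would exploit the half-spin decomposition $\g{v}=\g{v}_+\oplus\g{v}_-$ into the $(\pm1)$-eigenspaces of the chirality element $\omega=J_{Z_1}\cdots J_{Z_m}$ (with $\omega^2=\id$ since $m\equiv 0\pmod 4$), using the standard facts that $\dim\g{v}_\pm=m$ here and that each $J_Z$ anticommutes with $\omega$, hence interchanges $\g{v}_+$ and $\g{v}_-$. If $\xi\in\g{v}_+$, then $Z\mapsto J_Z\xi$ is a linear isomorphism $\g{z}\to\g{v}_-$, so $\g{J}\xi=\g{v}_-$ and $(\g{J}\xi)^\perp=\g{v}_+\ominus\R\xi$, which $J_Z$ sends into $\g{v}_-=\g{J}\xi$; thus $(\g{J}\xi)^\perp$, and likewise $\g{J}\xi=\g{v}_-$, is abelian, giving the ``if'' part of (ii) and the moreover. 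Conversely, if $\g{w}=\g{v}\ominus\g{J}\xi$ is abelian then $\g{J}v\subseteq\g{w}^\perp=\g{J}\xi$ for every nonzero $v\in\g{w}$, and a dimension comparison ($m=m$) shows each $J_Z$ swaps $\g{w}$ and $\g{w}^\perp$; this makes $\g{w}$ invariant under $\omega$, and the surjectivity of $Z\mapsto J_Z v$ onto a half-spin module then forces $\g{w}=\g{v}_+$ or $\g{w}=\g{v}_-$, so $\xi\in\g{w}$ lies in a half-spin submodule.

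The remaining and, I expect, hardest point is to exclude $N(5,1)$, which is exactly where the absence of a real half-spin splitting for odd $m$ must be converted into a genuine impossibility. Here $\g{w}=\g{v}\ominus\g{J}\xi$ has dimension $3$, and were it abelian one would again get $\g{J}v=\g{w}^\perp$ for every nonzero $v\in\g{w}$; equivalently the linear map $\g{w}\to\mathrm{Hom}(\g{z},\g{w}^\perp)\cong\mathrm{Mat}(5,\R)$, $v\mapsto(Z\mapsto J_Zv)$, would be injective with $3$-dimensional image consisting (apart from $0$) of invertible matrices. The Hurwitz--Radon theorem bounds the dimension of such a subspace of $\mathrm{Mat}(5,\R)$ by $\rho(5)=1$, a contradiction. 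This is the main obstacle, and the Hurwitz--Radon bound (or, alternatively, a hands-on argument with the relations $J_ZJ_W+J_WJ_Z=-2\langle Z,W\rangle\id$) is the tool I would rely on to finish. Combining the three cases with case (i) yields the stated equivalence.
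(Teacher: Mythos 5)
Your proof is correct, and at the two genuinely hard points it takes a different (and more elementary) route than the paper. The reformulation via $\g{w}=\g{v}\ominus\g{J}\xi=\R\xi\oplus(\g{J}\xi)^\perp$ and the reduction to the three groups $N(4,1)$, $N(5,1)$, $N(8,1)$ match the paper's dimension count; the one small slip is that the inclusion $J_Z(\g{J}\xi)^\perp\subseteq\g{J}\xi$ by itself only gives $n\le 2m+1$, while the bound $n\le 2m$ you state really comes from $J_Z\g{w}\subseteq\g{J}\xi$ with $\dim\g{w}=n-m$ --- which your reformulation does supply, and either bound produces the same finite list. For $N(5,1)$ the paper restricts the $\Cl(5)$-module to a $\Cl(3)$-module $\H\oplus\H$, uses transitivity of $\Spin(5)\cong\mathsf{Sp}(2)$ on the unit sphere to place $\xi$ in one $\H$-factor, and exhibits a non-commuting pair $U$, $JU$ in $(\g{J}\xi)^\perp$; your argument that abelianity of the $3$-dimensional $\g{w}$ would yield a $3$-dimensional (already a $2$-dimensional suffices) subspace of $\mathrm{Hom}(\g{z},\g{w}^\perp)\cong\mathrm{Mat}(5,\R)$ consisting of invertible maps, impossible because an odd-degree determinant polynomial has a real root (equivalently $\rho(5)=1$), is shorter and avoids the quaternionic bookkeeping. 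For $m\in\{4,8\}$ the paper proves the ``only if'' direction geometrically, identifying $(\g{J}\xi)^\perp$ with the tangent space $\g{spin}(m)\cdot\xi$ of the $\Spin(m)$-orbit through $\xi$ and invoking the cohomogeneity-one structure of the sum of half-spin representations to force the orbit to be singular; you instead show each $J_Z$ swaps $\g{w}$ and $\g{w}^\perp$, deduce that the volume element $\omega$ preserves $\g{w}$, and pin $\g{w}$ to a half-spin eigenspace by applying $\g{J}$ to an $\omega$-eigenvector in $\g{w}$. This gives a self-contained linear-algebra proof at the cost of the singular-orbit picture that the paper makes explicit; the ``if'' direction and the ``moreover'' claim are handled essentially identically in both, via $\omega$ and the fact that each nonzero $J\in\g{J}$ interchanges $\Delta_+$ and $\Delta_-$.
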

\begin{proof}
	Assume that  $(\g{J}\xi)^\perp$ is abelian. Then, if $U\in (\g{J}\xi)^\perp$ we have $\langle J_Z U, V\rangle=\langle [U,V],Z\rangle=0$ for each $V\in (\g{J}\xi)^\perp$ and $Z\in\g{z}$. Moreover, $\langle J_Z U, \xi\rangle=-\langle J_Z \xi, U\rangle=0$. Hence, $\g{J}U\subset\g{J}\xi$, for each $U\in (\g{J}\xi)^\perp$. In particular, for any fixed non-zero $Z\in\g{z}$, we have that $J_Z$ maps $(\g{J}\xi)^\perp$ injectively into $\g{J}\xi$. Therefore, $n-m-1=\dim (\g{J}\xi)^\perp\leq \dim\g{J}\xi=m$, which implies that $n\leq 2m+1$. But according to the classification of generalized Heisenberg groups and their dimensions (or, equivalently, according to the classification of Clifford modules over $\Cl(m)$, see~Table~\ref{table:delta}), there is only a finite number of groups satisfying such inequality, namely the groups with $1\leq m\leq 8$ and $k=1$. For $m\in\{1,3,7\}$ we have $n=m+1$ and hence $\dim (\g{J}\xi)^\perp=0$, whereas for $m\in\{2,6\}$ we have $n=m+2$ and then $\dim (\g{J}\xi)^\perp=1$. We are left with the cases $m\in\{4,5,8\}$.
	
	For $N(5,1)$ we have $n=8$, whence $\dim (\g{J}\xi)^\perp=2$. If we restrict the Clifford representation of $\Cl(5)$ on $\g{v}$ to a $\Cl(3)$-module, then $\g{v}$ turns out to be isomorphic to $\H\oplus\H$. Since the action of $\Spin(5)\cong\mathsf{Sp}(2)$ on $\g{v}$ is transitive on the unit sphere of $\g{v}$, we may assume (by conjugating the Clifford subalgebra $\Cl(3)$ by an element of $\Spin(5)$ if necessary) that $\xi$ is contained in one of these $\H$-factors, and then $(\g{J}\xi)^\perp$ is contained in the other $\H$-factor. In other words, $\g{v}=\R\xi\oplus\widetilde{\g{J}}\xi\oplus \R U\oplus \widetilde{\g{J}}U$, where we take $U\in(\g{J}\xi)^\perp$ and $\widetilde{\g{J}}=\spann\{J_{Z_1},J_{Z_2},J_{Z_3}\}$ for some orthonormal subset $Z_1,Z_2,Z_3$ of $\g{z}$. Then there exists $J\in\widetilde{\g{J}}$ such that $(\g{J}\xi)^\perp=\R U\oplus\R JU$, from where we deduce that $(\g{J}\xi)^\perp$ is not abelian, which contradicts our initial assumption.
	
	In order to finish the proof of the necessity of the proposition we have to analyze~the~cases $N(m,1)$, $m\in\{4,8\}$. Note that in both cases $n=\dim\g{v}=2m$, whence $\dim (\g{J}\xi)^\perp=m-1$.
	
	Assume that $N=N(m,1)$, $m\in\{4,8\}$. Since $(\g{J}\xi)^\perp$ is abelian by assumption, then for each non-zero $J\in\g{J}$ we have $J(\g{J}\xi)^\perp\perp(\g{J}\xi)^\perp$, that is, $J(\g{J}\xi)^\perp \subset \R \xi \oplus \g{J} \xi$. 
	Here one can easily see that $\xi$ and $J \xi$ are perpendicular to $J(\g{J}\xi)^\perp$, 
	and hence $J(\g{J}\xi)^\perp\subset \g{J}\xi\ominus\R J\xi$; but since both subspaces have the same dimension $m-1$, we have $J(\g{J}\xi)^\perp= \g{J}\xi\ominus\R J\xi$. Then also $J(\g{J}\xi\ominus\R J\xi)=(\g{J}\xi)^\perp$, for any non-zero $J\in\g{J}$.
	Take an orthonormal basis $\{Z_1,\dots, Z_m\}$ of $\g{z}$ and put $J_i=J_{Z_i}$, $i\in\{1,\dots,m\}$. Then $\{J_iJ_j\xi:1\leq i<j\leq m\}$ spans $(\g{J}\xi)^\perp$. 
	Consider the $\Spin(m)$-action on $\g{v}$ induced by the $\Cl(m)$-module $\g{v}$. On the one hand, this action is known to be the sum of the two irreducible half-spin representations, $\g{v}=\Delta_+\oplus\Delta_-$ (see for example~\cite[Proposition~I.5.12]{LM}). This representation restricts to the unit sphere $\mathbb{S}^{2m-1}$ of $\g{v}$, and the resulting action is of cohomogeneity one (see~\cite[Table~1]{Bergmann} for $m=8$, whereas if $m=4$ then $\Spin(4)\cong\mathsf{Sp}(1)\times\mathsf{Sp}(1)$ acts factorwise on $\g{v}\cong\H\oplus\H$). In fact, 
	the orbit $\Spin(m) \cdot \xi$ is singular if and only if $\xi$ is in one of the irreducible components $\Delta^+$ and $\Delta^-$. 
	In this case the orbit is isometric to the sphere $\mathbb{S}^{m-1}$, 
	to be exact, 
	$(\mathsf{Sp}(1) \times \mathsf{Sp}(1))/\mathsf{Sp}(1) \cong \mathbb{S}^3$ or $\Spin(8)/\Spin(7) \cong \mathbb{S}^7$. 
	Note that the regular orbits are 
	$\mathsf{Sp}(1) \times \mathsf{Sp}(1) \cong \mathbb{S}^3 \times \mathbb{S}^3$ and $\Spin(8)/\mathsf{G}_2 \cong \mathbb{S}^7 \times \mathbb{S}^7$, 
	which are of codimension one in the unit sphere of $\g{v}$. On the other hand, we can compute the tangent space to the orbit through $\xi$ as
	\[
	T_\xi(\Spin(m)\cdot \xi)=\g{spin}(m)\cdot \xi=\spann\{J_iJ_j\xi:1\leq i<j\leq m\}=(\g{J}\xi)^\perp.
	\]
	Hence, $\Spin(m)\cdot \xi$ is $(m-1)$-dimensional and, therefore, it must be one of the singular orbits, which implies that $\xi\in\Delta_+$ or $\xi\in\Delta_-$. This completes one of the implications in the statement.	
	
	Let us now show the converse. If  $\dim (\g{J}\xi)^\perp\in\{0,1\}$, then $(\g{J}\xi)^\perp$ is trivially abelian. So let us assume that  $N$ is isomorphic to $N(m,1)$ with $m\in\{4,8\}$. The decomposition $\g{v}=\Delta_+\oplus\Delta_-$ coincides with the $(\pm 1)$-eigenspace decomposition of the volume element $\omega=J_1\cdots J_m$ (see~\cite[Proposition~I.5.10]{LM}), and then any non-zero $J\in\g{J}$ interchanges $\Delta_+$ and $\Delta_-$ (see~\cite[Proposition~I.3.6]{LM}). Hence, since $\g{v}=\Delta_+\oplus\Delta_-$ is an orthogonal direct sum, we get that $\Delta_+$ and $\Delta_-$ are abelian. Now, if $\xi\in\Delta_+$ then $\g{J}\xi=\Delta_-$. This proves the last claim of the statement, but it also implies that  $(\g{J}\xi)^\perp=\Delta_+\ominus\R\xi$, and hence $(\g{J}\xi)^\perp$ is abelian. We argue analogously if $\xi\in\Delta_-$, and this concludes the proof.
\end{proof}

\begin{remark}\label{rem:dimJxi01}
	Notice that $\dim (\g{J}\xi)^\perp=0$ if and only if $N$ is isomorphic to $N(1,1)$, $N(3,1,0)$ or $N(7,1,0)$. Moreover $\dim (\g{J}\xi)^\perp=1$ corresponds precisely to $N(2,1)$ and $N(6,1)$. If $N$ is isomorphic to $N(2,1)$ then $\g{J}\xi$ is abelian. Indeed, in this case, if $\{Z_1,Z_2\}$ is an orthonormal basis of $\g{z}$, and $J_i:=J_{Z_i}$, then
	$\langle [J_i \xi, J_{i+1} \xi], Z_i \rangle = \langle J_i^2 \xi, J_{i+1} \xi \rangle =  0$, for each $i\in\{1,2\}$. 
	However, if $N$ is isomorphic to $N(6,1)$ then $\g{J}\xi$ is not abelian, since $\dim \g{v}=8$, $\dim \g{J}\xi=6$, and hence $J(\g{J}\xi)$ and $\g{J}\xi$ must intersect non-trivially, for any non-zero $J\in\g{J}$. Moreover, in this case, $[\g{J}\xi,(\g{J}\xi)^\perp]\neq 0$, since $\dim\g{J}(\g{J}\xi)^\perp=6=\dim \g{J}\xi$ and therefore $\g{J}(\g{J}\xi)^\perp$ and $\g{J}\xi$ have non-trivial intersection.
\end{remark}

\begin{proposition}\label{prop:bracket_0}
If $[\g{J}\xi,(\g{J}\xi)^\perp]=0$ and $m\geq 2$, then $\g{J}\xi$ is not abelian.
\end{proposition}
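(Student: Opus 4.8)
The plan is to argue by contradiction: I will assume that $\g{J}\xi$ is abelian and, combining this with the hypothesis $[\g{J}\xi,(\g{J}\xi)^\perp]=0$, derive that a certain nonzero vector must vanish. The whole argument rests on translating both the hypothesis and abelianity into orthogonality statements, using the two remarks recorded just before Proposition~\ref{prop:perp_abelian}: a subspace $\g{w}\subset\g{v}$ is abelian if and only if $\g{J}\g{w}\perp\g{w}$, and $[U,V]=0$ if and only if $\langle J_ZU,V\rangle=0$ for all $Z\in\g{z}$.

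First, since $m\geq 2$, I can fix an orthonormal pair $Z_1,Z_2\in\g{z}$ and set $J_1=J_{Z_1}$, $J_2=J_{Z_2}$. The vector $J_1J_2\xi$ will be the key object, and I will locate it inside the orthogonal decomposition $\g{v}=\R\xi\oplus\g{J}\xi\oplus(\g{J}\xi)^\perp$. On the one hand, the hypothesis $[\g{J}\xi,(\g{J}\xi)^\perp]=0$ rewrites as $\langle J_ZU,V\rangle=0$ for all $U\in\g{J}\xi$, $V\in(\g{J}\xi)^\perp$ and $Z\in\g{z}$; taking $U=J_2\xi$ and $Z=Z_1$ gives $J_1J_2\xi\perp(\g{J}\xi)^\perp$. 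On the other hand, $\langle J_1J_2\xi,\xi\rangle=-\langle J_2\xi,J_1\xi\rangle=-\langle Z_2,Z_1\rangle\lvert\xi\rvert^2=0$ by~\eqref{eq:inner_gHg}, so $J_1J_2\xi\perp\R\xi$ as well. Together these force $J_1J_2\xi\in\g{J}\xi$.

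Next, assuming for contradiction that $\g{J}\xi$ is abelian, the characterization above gives $\g{J}(\g{J}\xi)\perp\g{J}\xi$. Since $J_2\xi\in\g{J}\xi$, we have $J_1J_2\xi\in\g{J}(\g{J}\xi)$, and hence $J_1J_2\xi\perp\g{J}\xi$. Combining this with the previous step, $J_1J_2\xi$ both lies in $\g{J}\xi$ and is orthogonal to $\g{J}\xi$, so $J_1J_2\xi=0$. But each $J_Z$ with $Z$ a unit vector is an orthogonal complex structure by~\eqref{eq:inner_gHg}, hence invertible; thus $J_1J_2$ is invertible and $J_1J_2\xi\neq0$ because $\xi\neq0$. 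This contradiction shows that $\g{J}\xi$ cannot be abelian.

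The argument is short, and the only genuinely delicate point is bookkeeping about where the assumption $m\geq 2$ enters: it is precisely what guarantees the existence of two orthonormal vectors $Z_1,Z_2\in\g{z}$, and therefore of the ``off-diagonal'' product $J_1J_2$ that produces the contradiction. For $m=1$ no such product exists (one only has $J_1^2\xi=-\xi\in\R\xi$), which is consistent with the conclusion failing in that case.
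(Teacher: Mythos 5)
Your proof is correct and follows essentially the same route as the paper: both arguments translate the hypothesis and abelianity into orthogonality statements that confine $\g{J}(\g{J}\xi)$ to $\R\xi\oplus\g{J}\xi$, and both ultimately hinge on the vector $J_1J_2\xi$ (the paper exhibits the non-commuting pair $W,JW$ inside $\g{J}\xi$ via a dimension count, which for $W=J_2\xi$ and $J=J_1$ is exactly your pair). Your version, locating $J_1J_2\xi$ explicitly in the decomposition $\g{v}=\R\xi\oplus\g{J}\xi\oplus(\g{J}\xi)^\perp$ and contradicting invertibility of $J_1J_2$, is a clean and slightly more direct rendering of the same idea.
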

\begin{proof}
	Let $V\in(\g{J}\xi)^\perp$, $W\in\g{J}\xi$ and $Z\in\g{z}$. Then, by assumption, $0=\langle [W,V],Z\rangle=\langle J_ZW,V\rangle$, which implies that $\g{J}(\g{J}\xi)\subset \R \xi\oplus\g{J}\xi$. Fix any non-zero $J\in\g{J}$. Hence, by dimension reasons, we have $J(\g{J}\xi)=(\g{J}\xi\ominus \R J\xi)\oplus\R \xi$. Therefore, $\g{J}\xi\oplus\R \xi$ is a $J$-complex subspace of $\g{v}$, where we endow $\g{v}$ with the complex structure $J$. Hence, if $m\geq 2$, $\g{J}\xi$ admits a $J$-complex subspace of the form $\R W\oplus\R JW$, with $W\neq 0$, and then $[W,JW]\neq 0$ by~\eqref{eq:inner_gHg}, which concludes the proof. 
\end{proof}

At this point, we can finish the proof of the main result of this section.

\begin{proof}[Proof of Theorem~\ref{th:Heisenberg}]
	First observe that, by~\cite{Lauret:mathann}, a simply connected nilpotent Lie group with a left-invariant metric is an algebraic Ricci soliton if and only if it is a Ricci soliton. Since $S$ is simply connected and nilpotent (as a codimension one subgroup of $N$), in order to prove the theorem we just have to analyze the algebraic Ricci soliton condition. 
	
	By Proposition~\ref{prop:Heisenberg_charact}, the four items in the statement of Theorem~\ref{th:Heisenberg} give rise to algebraic Ricci solitons. Indeed, in case (i) $\g{J}\xi$ is one-dimensional and, hence, abelian, and also $[\g{J}\xi,(\g{J}\xi)^\perp]=0$ since $\g{J}(\g{J}\xi)=\R\xi$ is orthogonal to $(\g{J}\xi)^\perp$. In case (ii), $(\g{J}\xi)^\perp$ is one-dimensional and $\g{J}\xi$ is abelian as well (see Remark~\ref{rem:dimJxi01}). In case (iii) we have $(\g{J}\xi)^\perp=0$ (again as noticed in Remark~\ref{rem:dimJxi01}), and then $[\g{J}\xi,(\g{J}\xi)^\perp]=0$. Finally, in case (iv), both  $\g{J}\xi$ and $(\g{J}\xi)^\perp$ are abelian by Proposition~\ref{prop:perp_abelian}. 
	
	In order to prove the converse, assume that $S$ is an algebraic Ricci soliton, and let us apply again Proposition~\ref{prop:Heisenberg_charact}. We distinguish two main cases depending on whether $(\g{J}\xi)^\perp$ is abelian or not.
	
	Assume first that $(\g{J}\xi)^\perp$ is abelian. Then, by Proposition~\ref{prop:perp_abelian}, either $\dim (\g{J}\xi)^\perp\in\{0,1\}$ or $N$ corresponds to item (iv) in the statement of Theorem~\ref{th:Heisenberg}. If $(\g{J}\xi)^\perp=0$ (and hence also $[\g{J}\xi,(\g{J}\xi)^\perp]=0$), then, as noted in Remark~\ref{rem:dimJxi01}, $N$ is isomorphic to $N(1,1)$, $N(3,1,0)$ or $N(7,1,0)$, which correspond to items (i) and (iii) in the statement. Again, as observed in Remark~\ref{rem:dimJxi01}, if $\dim(\g{J}\xi)^\perp=1$, then $N$ is isomorphic to $N(2,1)$ or $N(6,1)$. In the first case $\g{J}\xi$ is abelian (and this leads to item (ii) in the statement), whereas in the second case $\g{J}\xi$ is not abelian and $[\g{J}\xi,(\g{J}\xi)^\perp]\neq 0$. 
	
	Finally, assume that $(\g{J}\xi)^\perp$ is not abelian. Therefore, by Proposition~\ref{prop:Heisenberg_charact} we must have that $\g{J}\xi$ is abelian and $[\g{J}\xi,(\g{J}\xi)^\perp]= 0$. Then, Proposition~\ref{prop:bracket_0} implies that $N$ is isomorphic to $N(1,k)$, which leads to item (i) in the statement of Theorem~\ref{th:Heisenberg}.
\end{proof}

\section{Solvsoliton hypersurfaces of Damek-Ricci spaces}\label{sec:DR}
This section is devoted to derive the classification of codimension one Ricci soliton subgroups of Damek-Ricci spaces. In \S\ref{subsec:DR_prelim} we recall the definition and fundamental facts about Damek-Ricci spaces, and in \S\ref{subsec:DR_proof} we prove the classification result contained in Theorem~\ref{th:DR}.

\subsection{Damek-Ricci spaces}\label{subsec:DR_prelim}\hfill

Let $\g{a}$ be a one-dimensional real vector space, $B\in\g{a}$ a non-zero vector, and $\g{n}$ a generalized Heisenberg algebra. We will use the notations and facts introduced in \S\ref{subsec:Heisenberg_prelim} for generalized Heisenberg algebras and groups; in particular, recall that $\g{n}=\g{v}\oplus\g{z}$. We define the vector space direct sum $\g{a}\oplus\g{n}$, which we endow with the Lie algebra structure determined by the Lie algebra structure of the generalized Heisenberg algebra $\g{n}$ and the relations
\[
[B, U]=\frac{1}{2}U, \qquad [B,Z]=Z, \qquad\text{for every } U\in\g{v}, Z\in\g{z}.
\]
This converts $\g{a}\oplus\g{n}$ into a solvable Lie algebra. We equip $\g{a}\oplus\g{n}$ with the positive definite inner product $\langle\cdot,\cdot\rangle$ that extends the inner product of $\g{n}$, makes $B$ into a unit vector and $\g{a}\oplus\g{n}$ into an orthogonal direct sum.
The simply connected solvable Lie group $AN$ with Lie algebra $\g{a}\oplus\g{n}$, endowed with the left-invariant Riemannian metric determined by the inner product $\langle\cdot,\cdot\rangle$ on~$\g{a}\oplus\g{n}$, is called a \emph{Damek-Ricci space}. Analogously as in \S\ref{subsec:Heisenberg_prelim}, we~will use the notation $AN(m,k)$ or $AN(m,k_+,k_-)$ to refer to a Damek-Ricci space whose underlying generalized Heisenberg group with Lie algebra~$\g{n}$ is $N(m,k)$ or $N(m,k_+,k_-)$, respectively.

Damek-Ricci spaces constitute an important family of Einstein solvmanifolds. This~family contains the rank one symmetric spaces of non-compact type and non-constant curvature, that is, the hyperbolic spaces over the complex numbers, the quaternions and the octonions. Indeed, these are the only symmetric Damek-Ricci spaces: $AN(1,k)$, $k\geq 1$,~which is homothetic to a complex hyperbolic space $\C \mathsf{H}^{k+1}$, $AN(3,k,0)\cong AN(3,0,k)$, $k\geq 1$, which is homothetic to a quaternionic hyperbolic space $\H \mathsf{H}^{k+1}$, and $AN(7,1,0)\cong AN(7,0,1)$, which is homothetic to the Cayley hyperbolic plane $\mathbb{O} \mathsf{H}^2$. The real hyperbolic spaces $\R\mathsf{H}^{k+1}$, $k\geq 1$, could be considered as members of this family if one had allowed $\g{z}=0$ (in this~case $\g{n}=\g{v}$ would be abelian). However, in this section we assume that $\dim\g{z}>0$,~as~our problem in real hyperbolic spaces is straightforward (see the proof of Theorem~\ref{th:symmetric} at the~end~of~\S\ref{subsec:symmetric_proof}).

The Levi-Civita connection of a Damek-Ricci space $AN$ is given by the formula
\begin{equation}\label{eq:LC_DR}
	\nabla_{sB+V+Y}\bigl(rB+U+X\bigr)=\left(\frac{1}{2}\langle U,V\rangle +\langle X,Y\rangle\right)\! B-\frac{1}{2}J_XV-\frac{1}{2}J_YU-\frac{1}{2}rV-\frac{1}{2}[U,V]-rY,
\end{equation}
for any $s,r\in\R$, $U,V\in\g{v}$ and $X,Y\in\g{z}$. Using this, one can derive formulas for the curvature tensor and, thus, one can deduce that the Ricci operator is given by
\begin{equation}\label{eq:Ric_DR}
\Ric^{AN}=-\left(m+\frac{n}{4}\right)\id,
\end{equation}
where we recall that $m=\dim\g{z}$ and $n=\dim\g{v}$. For more information on Damek-Ricci spaces, we refer to~\cite{BTV}.

\subsection{Proof of the classification result}\label{subsec:DR_proof}\hfill

Let $S$ be a connected Lie subgroup of codimension one of a Damek-Ricci space $AN$. Let $\g{s}=(\g{a}\oplus\g{n})\ominus\R\xi$ be the corresponding Lie subalgebra of $\g{a}\oplus\g{n}$, where $\xi=aB+U+Z$ is a unit vector, with $a\in\R$, $U\in\g{v}$ and $Z\in\g{z}$. Then $J_Z U$, $\lvert Z\rvert^2 U-\lvert U\rvert^2Z\in\g{s}\cap\g{n}$ and, by~\eqref{eq:xi_H_DR} (with $W=U$) and $[\g{v},\g{v}]=\g{z}$, we deduce that $Z=0$. Therefore
\[
\xi=aB+U, \qquad \text{with}\quad  a\in\R,\quad U\in\g{v},\quad a^2+|U|^2=1.
\]
Note that, in this case, we have the orthogonal decompositions
\[
\g{s}=\R(|U|^2B-aU)\oplus(\g{v}\ominus\R U)\oplus\g{z}=\R(|U|^2B-aU)\oplus\g{J}U\oplus(\g{J}U)^\perp\oplus\g{z},
\]
where, as in~\S\ref{subsec:Heisenberg_proof}, $\g{J}=\{J_Z:Z\in\g{z}\}$, $\g{J}U=\{J_Z U:Z\in\g{z}\}$ and $(\g{J}U)^\perp=\g{v}\ominus(\R U\oplus\g{J}U)$.

Let us calculate the shape operator of $S$ by making use of~\eqref{eq:LC_DR}: 
\begin{align}\label{eq:shape_DR}
	\nonumber
\Ss_\xi (|U|^2B-aU)&=-\nabla_{|U|^2B-aU}(aB+U)=\frac{a}{2}(|U|^2B-aU), \\ \nonumber
\Ss_\xi V&=-\nabla_V(aB+U)=\frac{a}{2}V, & & \text{for any } V\in(\g{J}U)^\perp, \\
\Ss_\xi J_Z U &= -\nabla_{J_Z U}(aB+U)=\frac{1}{2}(aJ_ZU+|U|^2Z), & & \text{for any }Z\in\g{z},\, J_ZU\in \g{J}U, \\ \nonumber
\Ss_\xi Z&=-\nabla_Z (aB+U)=\frac{1}{2}J_ZU+aZ, & & \text{for any } Z\in\g{z}.
\end{align}
In particular
\begin{equation}\label{eq:trS_DR}
\tr\Ss_\xi=\frac{a}{2}+\frac{a}{2}(n-m-1)+\frac{a}{2}m+am=a\biggl(m+\frac{n}{2}\biggr).
\end{equation}

In order to calculate the normal Jacobi operator $R_\xi=R^{AN}(\cdot,\xi)\xi$ of $S$, we can combine the definition of the curvature tensor of the ambient space, $R^{AN}(X,Y)=[\nabla_X,\nabla_Y]-\nabla_{[X,Y]}$, with the formula~\eqref{eq:LC_DR} for the Levi-Civita connection, or alternatively use the formula for the Jacobi operator of a Damek-Ricci space~\cite[\S4.1.8]{BTV}. In any case, we get
\begin{equation}\label{eq:jacobi_DR}
\begin{aligned}
R_\xi (|U|^2B-aU)&=-\frac{1}{4}(|U|^2B-aU), 
\\
R_\xi V&=-\frac{1}{4}V, &\quad &\text{for any } V\in(\g{J}U)^\perp,
\\
R_\xi J_Z U &= -\frac{1}{4}\bigl(3|U|^2+1\bigr)J_ZU-\frac{3}{4}a|U|^2Z, &\quad &\text{for any }Z\in\g{z},\, J_ZU\in \g{J}U, 
\\
R_\xi Z&=-\frac{3}{4}aJ_ZU+\biggl(\frac{3}{4}|U|^2-1\biggr)Z, &\quad &\text{for any } Z\in\g{z}.
\end{aligned}
\end{equation}
Therefore, by inserting \eqref{eq:Ric_DR}, \eqref{eq:shape_DR}, \eqref{eq:trS_DR} and~\eqref{eq:jacobi_DR} into~\eqref{eq:Ric_Gauss}, recalling that $a^2+|U|^2=1$, and setting for convenience
\begin{equation}\label{eq:mn_tilde}
\widetilde{m}:=-m-\frac{n}{4}\qquad \text{and}\qquad \widetilde{n}:=m+\frac{n}{2},
\end{equation}
we obtain that the Ricci operator of the hypersurface $S$ is given by:
\begin{align}\label{eq:Ric_DR_hyp}
\begin{split} 
	\Ric(V)&=\frac{1}{4}\left(\lvert U\rvert^2+4\wt{m}+2a^2\wt{n}\right) V, 
	\\ 
	\Ric (J_ZU)&=\frac{1}{4}\left(3\lvert U\rvert^2+4\wt{m}+2a^2\wt{n}\right)J_ZU+\frac{\wt{n}}{2}a\lvert U\rvert^2 Z,
	\\
	\Ric(Z)&=\frac{\wt{n}}{2} a J_ZU +(\wt{m}+a^2\wt{n})Z,
\end{split} 
\end{align}
for any $V\in\R(|U|^2B-aU)\oplus(\g{J}U)^\perp$ and $Z\in\g{z}$. 

In what follows we will study when $S$ is an algebraic Ricci soliton, which amounts to determining under which circumstances and for which values of $c\in\R$ the endomorphism $D:=\Ric-c\id$ of $\g{s}$ is a derivation of $\g{s}$.

\begin{proposition}\label{prop:DR_2cases}
	If $S$ is an algebraic Ricci soliton, then $\xi\in\g{a}$ or $\xi\in\g{v}$. Moreover, if $\xi\in\g{v}$ then $c=(1+4\wt{m})/4$.
\end{proposition}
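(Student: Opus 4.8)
The plan is to exploit the derivation condition $D := \Ric - c\,\id \in \Der(\g{s})$, which holds for some $c \in \R$ precisely because $S$ is an algebraic Ricci soliton, and to test it against a carefully chosen pair of tangent vectors so as to isolate the quantity $a\lvert U\rvert^2$. First I would record the relevant Lie brackets among the summands of $\g{s} = \R(\lvert U\rvert^2 B - aU) \oplus \g{J}U \oplus (\g{J}U)^\perp \oplus \g{z}$. Writing $\zeta := \lvert U\rvert^2 B - aU$ and using the bracket relations of $\g{a}\oplus\g{n}$ together with $[U,J_ZU]=\lvert U\rvert^2 Z$ from~\eqref{eq:inner_gHg}, one gets $[\zeta, Z] = \lvert U\rvert^2 Z$ and $[\zeta, J_Z U] = \tfrac{1}{2}\lvert U\rvert^2 J_Z U - a\lvert U\rvert^2 Z$ for $Z \in \g{z}$. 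I would also read off from~\eqref{eq:Ric_DR_hyp} that $\zeta$ is an eigenvector of $\Ric$ with eigenvalue $\lambda := \tfrac14\bigl(\lvert U\rvert^2 + 4\wt m + 2a^2\wt n\bigr)$, while $\Ric$ couples $\g{J}U$ and $\g{z}$ through the off-diagonal coefficients $\tfrac{\wt n}{2}a\lvert U\rvert^2$ (on $J_ZU$) and $\tfrac{\wt n}{2}a$ (on $Z$).

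The central step is to apply the derivation identity $D[\zeta,Z] = [D\zeta,Z] + [\zeta,DZ]$ to a nonzero $Z \in \g{z}$, which exists because $\dim\g{z} = m \geq 1$ in a Damek--Ricci space. Expanding the left-hand side via $[\zeta,Z]=\lvert U\rvert^2 Z$ and $DZ = \tfrac{\wt n}{2}a\,J_ZU + (\wt m + a^2\wt n - c)Z$, and the right-hand side via $D\zeta = (\lambda - c)\zeta$ and the two brackets above, I would compare only the $\g{J}U$-components of both sides. The sole $J_ZU$ contributions come from $\lvert U\rvert^2 DZ$ on the left and from $[\zeta, DZ]$ on the right (through the $J_ZU$-coefficient $\tfrac12\lvert U\rvert^2$ of $[\zeta,J_ZU]$), and matching them gives $\tfrac{\wt n}{2}a\lvert U\rvert^2 = \tfrac{\wt n}{4}a\lvert U\rvert^2$, i.e.\ $\wt n\, a\lvert U\rvert^2 = 0$. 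Since $\wt n = m + \tfrac{n}{2} > 0$, this forces $a\lvert U\rvert^2 = 0$, so $a = 0$ or $U = 0$; equivalently $\xi \in \g{v}$ or $\xi \in \g{a}$, which is the first assertion.

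For the second assertion I would specialize to $\xi \in \g{v}$, so $a = 0$ and $\lvert U\rvert = 1$; then $\zeta = B$, the off-diagonal terms in~\eqref{eq:Ric_DR_hyp} vanish, and $\Ric$ becomes diagonal with $\Ric B = \lambda B$, where $\lambda = \tfrac14(1 + 4\wt m)$, and $\Ric Z = \wt m\, Z$ for $Z \in \g{z}$. Applying the derivation identity to $B$ and a nonzero $Z$, using $[B,Z] = Z$, the relation $D[B,Z] = [DB,Z] + [B,DZ]$ collapses to $\wt m - c = (\lambda - c) + (\wt m - c)$, whence $c = \lambda = (1+4\wt m)/4$, as claimed.

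The main obstacle, and the point requiring care, is the off-diagonal structure of the Ricci operator on $\g{J}U \oplus \g{z}$ when $a \neq 0$: a naive attempt to determine $c$ from diagonal eigenvalues alone would miss the coupling, and different test brackets produce seemingly incompatible candidate values of $c$. The key idea that keeps the argument clean is to choose the test bracket $[\zeta, Z]$, which lands entirely in $\g{z}$, so that comparing only the transverse $\g{J}U$-components isolates the single monomial $a\lvert U\rvert^2$ and avoids having to solve a coupled linear system. Everything else reduces to routine bookkeeping with the brackets already computed.
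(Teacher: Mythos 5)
Your argument is correct and is essentially the paper's own proof: both test the derivation identity on the bracket $[\,|U|^2B-aU,\,Z\,]=|U|^2Z$ with $Z\in\g{z}$, extract $a|U|^2=0$ from the $J_ZU$-component (using $\wt n>0$), and then read off $c=(1+4\wt m)/4$ from the $Z$-component when $a=0$. No gaps; the bracket computations and the coefficient matching agree with equations \eqref{eq:Ric_DR_hyp} and the proof in the text.
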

\begin{proof}
	If $S$ is an algebraic Ricci soliton, then $D$ must, in particular, satisfy
	\[
	D[|U|^2B-aU,Z]=[D(|U|^2B-aU),Z]+[|U|^2B-aU,DZ],
	\]
	for any $Z\in\g{z}$. By the definition of $D$ and~\eqref{eq:Ric_DR_hyp}, the term on the left-hand side turns out to be
	\[
	D[|U|^2B-aU,Z]=|U|^2DZ=|U|^2\frac{\wt{n}}{2} a J_ZU + 
 |U|^2 \left( \wt{m}+a^2\wt{n}-c \right) Z.
	\]
	Similarly, for the addends of the right-hand side we have:
	\begin{align*}
		\left[D(|U|^2B-aU),Z\right] &= \frac{1}{4}\left(\lvert U\rvert^2+4\wt{m}+2a^2\wt{n}-4c\right)|U|^2 Z,
		\\
		\left[|U|^2B-aU,DZ\right]&=
		\frac{\wt{n}}{4} a |U|^2 J_ZU +|U|^2 \left(\wt{m}-c+\frac{\wt{n}}{2} a^2\right)Z.
	\end{align*}
	Altogether, considering the component in the direction of $J_Z U$, we get from the first equation of the proof that either $U=0$ or $a=0$, which proves the first claim in the statement. Finally, if $\xi\in\g{v}$, then $a=0$ and $|U|=1$, and taking the $Z$-component in the equation considered above we get the expression for $c$ in the statement.
\end{proof}

Note that, if $\xi=U\in\g{v}$, then \eqref{eq:Ric_DR_hyp} translates into
\begin{equation}\label{eq:Ric_DR_2}
\Ric\rvert_{\g{a}\oplus(\g{J}\xi)^\perp}=\biggl(\wt{m}+\frac{1}{4}\biggr)\id, \qquad \Ric\rvert_{\g{J}\xi}=\biggl(\wt{m}+\frac{3}{4}\biggr)\id,\qquad
\Ric\rvert_{\g{z}}=\wt{m}\id.
\end{equation}
We can now conclude the proof of the main result of this section.

\begin{proof}[Proof of Theorem~\ref{th:DR}]
	First note that, since $\g{a}\oplus\g{n}$ is completely solvable (i.e.\ $\ad(X)$ has only real eigenvalues, for each $X\in\g{a}\oplus\g{n}$), the same is true for the Lie subalgebra $\g{s}=(\g{a}\oplus\g{n})\ominus\R\xi$. 
Hence, by~\cite{Lauret:crelle}, the connected, simply connected Lie subgroup $S$ of a Damek-Ricci space $AN$ with Lie algebra $\g{s}$ is a Ricci soliton if and only if it is an algebraic Ricci soliton.
	
	Now, observe that the examples mentioned in the statement are indeed algebraic Ricci solitons. On the one hand, any generalized Heisenberg group $N$ is an algebraic Ricci soliton, as remarked in~\S\ref{subsec:Heisenberg_prelim}. On the other hand, if $S$ is a Lohnherr hypersurface in $\C \mathsf{H}^2$, then it follows from~\eqref{eq:Ric_DR_hyp} that $D=\Ric-c\id$, with $c=(1+4\wt{m})/4$, has eigenvalues $0$, $1/2$ and $-1/4$ with respective $1$-dimensional eigenspaces $\g{a}$, $\g{J}\xi$ and $\g{z}$. One can easily check that $D$ is a derivation of $\g{s}=\g{a}\oplus(\g{v}\ominus\R\xi)$, which shows that $S$ is an algebraic Ricci soliton.
	
	Finally, we prove the converse implication. Thus, assume that $S$ is an algebraic Ricci soliton. By Proposition~\ref{prop:DR_2cases} we have that $\xi\in\g{a}$ or $\xi\in\g{v}$. 
	
	If $\xi\in\g{a}$, then $\g{s}=\g{n}$ 
	is a generalized Heisenberg algebra. Hence, the corresponding connected Lie group $S$ coincides with the generalized Heisenberg group $N$.

Let us assume that $\xi\in\g{v}$. By Proposition~\ref{prop:DR_2cases} we have $c=(1+4\wt{m})/4$. 
Then by~\eqref{eq:Ric_DR_2} we have that $D = \Ric - c \id$ satisfies 
\[ 
D \rvert_{\g{a}\oplus(\g{J}\xi)^\perp} = 0 , \qquad 
D \rvert_{\g{J}\xi} = \frac{1}{2} \id , \qquad 
D \rvert_{\g{z}} = - \frac{1}{4} \id . 
\] 
Since $D$ is a derivation, 
one can see that $(\g{J}\xi)^\perp$ and $\g{J}\xi$ are abelian subspaces of $\g{v}$, and $[(\g{J}\xi)^\perp,\g{J}\xi]=0$.
But then, by Proposition~\ref{prop:bracket_0} we must have $m=\dim\g{z}=1$, which implies that $AN$ is isometric to a complex hyperbolic space. Moreover, by Proposition~\ref{prop:perp_abelian} we also have $\dim(\g{J}\xi)^\perp\in\{0,1\}$, which forces $AN$ to be in fact isometric to a complex hyperbolic plane $\C \mathsf{H}^2$. Since $\xi\in\g{v}$, $S$ is a Lohnherr hypersurface in $\C \mathsf{H}^2$. This concludes the proof.
\end{proof}

\section{Solvsoliton hypersurfaces of symmetric spaces of non-compact type}\label{sec:symmetric}
	
This section is devoted to studying which codimension one subgroups of the solvable part of the Iwasawa decomposition of the isometry group of a symmetric space of non-compact type are Ricci solitons. In \S\ref{subsec:symmetric_prelim} we gather some preliminaries and useful results regarding symmetric spaces of non-compact type, whereas \S\ref{subsec:symmetric_proof} contains the proof of Theorem~\ref{th:symmetric}.

\subsection{Symmetric spaces of non-compact type}\label{subsec:symmetric_prelim}\hfill

Let $M$ be a symmetric space of non-compact type. Then $M$ can be identified with a quotient space $G/K$, where $G$ is the connected component of the identity of the isometry group of $M$ (up to a finite quotient), and $K$ is the isotropy subgroup of $G$ corresponding to some point $o\in M$. Let $\theta$ be the corresponding Cartan involution of the Lie algebra $\g{g}$ of $G$, and $\g{g}=\g{k}\oplus\g{p}$ the associated Cartan decomposition, where $\g{k}$ and $\g{p}$ are the $(+1)$ and $(-1)$-eigenspaces of $\theta$, respectively. The Lie algebra $\g{g}$ is real semisimple, which implies that its Killing form $B$ is non-degenerate. Indeed, the Cartan decomposition $\g{g}=\g{k}\oplus\g{p}$ is $B$-orthogonal, $B\rvert_{\g{k}\times\g{k}}$ is negative definite, and $B\rvert_{\g{p}\times\g{p}}$ is positive definite (since $M$ is of non-compact type). Hence, defining 
\[
\langle X, Y \rangle_{B_{\theta}} :=-B(\theta X, Y), \qquad \text{for all }X,Y\in\g{g},
\]
we have that $\langle \cdot, \cdot \rangle_{B_{\theta}}$ is a positive definite inner product on $\g{g}$. It is easy to check that this inner product satisfies 
\begin{equation}\label{eq:cartan:inner}
\langle \ad (X)Y, Z \rangle_{B_{\theta}}=- \langle Y, \ad(\theta X)Z \rangle_{B_{\theta}},\qquad \text{for all }X,Y,Z\in\g{g}.
\end{equation}

Let $\g{a}$ be a maximal abelian subspace of $\g{p}$. The rank of $M$ is precisely the dimension~of~$\g{a}$. Then $\{\ad(H):H\in\g{a}\}$ constitutes a commuting family of self-adjoint endomorphisms of $\g{g}$, and hence they diagonalize simultaneously. Their common eigenspaces are called the restricted root spaces of $\g{g}$, whereas their non-zero eigenvalues (which depend linearly on $H\in\g{a}$) are the restricted roots of $\g{g}$. In other words, if for each covector $\lambda\in\g{a}^*$ we define 
\[
\g{g}_\lambda:=\{X\in\g{g}: [H,X]=\lambda(H)X \text{ for all } H\in\g{a}\},
\]
then any $\g{g}_\lambda\neq 0$ is a restricted root space, and any $\lambda\neq 0$ such that $\g{g}_\lambda\neq 0$ is a restricted root. Notice that $\g{g}_0$ is always non-zero, since $\g{a}\subset \g{g}_0$. If  $\Sigma=\{\lambda\in\g{a}^*:\lambda\neq 0,\,\g{g}_\lambda\neq 0\}$ denotes the set of restricted roots, then the  $\langle \cdot, \cdot \rangle_{B_{\theta}}$-orthogonal decomposition 
\begin{equation*}\label{eq:root_space_decomposition}
\g{g}=\g{g}_0\oplus\biggl(\bigoplus_{\lambda\in\Sigma}\g{g}_\lambda\biggr)
\end{equation*}
is called the restricted root space decomposition of $\g{g}$. Moreover, we have the bracket relations $[\g{g}_{\lambda},\g{g}_{\mu}] \subset \g{g}_{\lambda+\mu}$
and $\theta \g{g}_\lambda=\g{g}_{-\lambda}$, for any $\lambda$, $\mu\in\g{a}^*$. 
We also have the $\langle \cdot, \cdot \rangle_{B_{\theta}}$-orthogonal decomposition $\g{g}_0=\g{k}_0\oplus\g{a}$, where $\g{k}_0=\g{g}_0\cap\g{k}$ is the normalizer of $\g{a}$ in $\g{k}$.

For each $\lambda\in \Sigma$, we define $H_\lambda\in\g{a}$ by the relation $B(H_\lambda,H)=\lambda(H)$, for all $H\in\g{a}$. We can introduce an inner product on $\g{a}^*$ given by $\langle \lambda,\mu\rangle:=B(H_\lambda,H_\mu)$. We will write $| \lambda | = \langle \lambda,\lambda\rangle^{1/2}$ for the induced norm on $\g{a}^*$. Then, we have that $\Sigma$ is an abstract root system in $\g{a}^*$. Thus, we can define a positivity criterion on $\g{a}^*$, which allows us to decompose $\Sigma=\Sigma^+\cup(-\Sigma^+)$, where $\Sigma^+$ is the set of positive roots. We will denote by $\Pi\subset \Sigma^+$ the corresponding set of simple roots, that is, those positive roots $\lambda\in \Sigma^+$ which are not sum of two positive roots. Then, any root $\lambda\in\Sigma$ is a linear combination of elements of $\Pi$ with integer coefficients, where all of them are either non-negative (if $\lambda\in\Sigma^+$) or non-positive (if $\lambda\in-\Sigma^+$). We will also make use of the so-called Cartan integers, that is, the integers
\[
A_{\alpha, \lambda} := \frac{2 \langle \alpha, \lambda \rangle}{|\alpha|^2},
\]
for each $\alpha$, $\lambda \in \Sigma$. We refer to ~\cite{K} for more details on root systems.


A fundamental result for our work is the Iwasawa decomposition theorem. At the Lie algebra level, it states that $\g{g}=\g{k}\oplus\g{a}\oplus\g{n}$ is a vector space direct sum, where $\g{n}=\bigoplus_{\lambda\in\Sigma^+}\g{g}_\lambda$. Note that $\g{n}$ and $\g{a}\oplus\g{n}$ are nilpotent and solvable Lie subalgebras of $\g{g}$, respectively; indeed $[\g{a} \oplus \g{n}, \g{a} \oplus \g{n}] \subset \g{n}$. 
Let $A$, $N$ and $AN$ denote the (closed) connected Lie subgroups of $G$ with Lie algebras $\g{a}$, $\g{n}$ and $\g{a}\oplus\g{n}$, respectively. The Iwasawa decomposition at the Lie group level states that $G$ is diffeomorphic to the Cartesian product $K\times A\times N$. It follows that $AN$ acts simply transitively on $M\cong G/K$, and hence $M$ is diffeomorphic to the solvable Lie group $AN$. If we pull back the symmetric Riemannian metric on $M$ to $AN$ via this diffeomorphism, we obtain a left-invariant metric on $AN$. Thus, any symmetric space of non-compact type $M\cong G/K$ is isometric to the solvable part $AN$ of an Iwasawa decomposition for $G$, equipped with a certain left-invariant metric. We will denote by $\langle\cdot,\cdot\rangle$ this left-invariant metric on $AN$, and also the corresponding inner product~on~$\g{a}\oplus\g{n}$. 

In what follows, the symmetric space $M$ will be assumed to be irreducible.
If $X,Y\in\g{a}\oplus\g{n}$, and denoting orthogonal projections (with respect to $\langle\cdot,\cdot\rangle_{B_\theta}$) with subscripts, the following relation between  $\langle\cdot,\cdot\rangle$ and  $\langle\cdot,\cdot\rangle_{B_\theta}$ holds (up to homothety of the metric on $M$):
\begin{equation}\label{eq:relation:inner}
\langle X,Y\rangle =\langle X_\mathfrak{a},Y_\mathfrak{a}\rangle_{B_\theta}+\frac{1}{2}\langle X_\mathfrak{n},Y_\mathfrak{n}\rangle_{B_\theta}.
\end{equation}
Using Koszul's formula and relations~\eqref{eq:relation:inner}-\eqref{eq:cartan:inner}, one can obtain a formula for the Levi-Civita connection $\nabla$ of the Lie group $AN$ with its left-invariant metric. Indeed, if $X,Y,Z\in\g{a}\oplus\g{n}$:
\begin{equation}\label{eq:inner:an:bphi}
\langle \nabla_X Y,Z\rangle =\frac{1}{4}\langle [X,Y]+[\theta X,Y]-[X,\theta Y],Z\rangle_{B_\theta}.
\end{equation}
We warn of the use of two different inner products on the previous formula.

We end this subsection with two lemmas that will be useful in what follows. We also note that, from now on, whenever we consider a unit vector $X\in\g{a}\oplus\g{n}$ we will understand that it has length one with respect to the inner product $\langle\cdot,\cdot\rangle$ defined on $\g{a}\oplus\g{n}$.
\begin{lemma}\label{lemma:berndt:sanmartin} \emph{\cite[Lemma 2.3]{BS18}}
	Let $\lambda \in \Sigma^{+}$ and $X,Y \in \g{g}_{\lambda}$ be orthogonal. Then:
	\begin{enumerate}[{\rm (i)}]
		\item $[\theta X, X] = 2\langle X, X \rangle H_{\lambda}=\langle X, X \rangle_{B_{\theta}} H_{\lambda}$. \label{lemma:berndt:sanmartin:i}
		\item $[\theta X, Y] \in \g{k}_0 = \g{g}_0 \ominus \g{a}$. \label{lemma:berndt:sanmartin:ii}
	\end{enumerate}
\end{lemma}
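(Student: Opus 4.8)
The plan is to exploit the restricted root space structure together with the $\theta$-equivariance and the $\ad$-invariance of the Killing form $B$. The starting observation is that, since $X,Y\in\g{g}_\lambda$, we have $\theta X\in\g{g}_{-\lambda}$, so $[\theta X,X]$ and $[\theta X,Y]$ lie in $[\g{g}_{-\lambda},\g{g}_\lambda]\subset\g{g}_0=\g{k}_0\oplus\g{a}$. Both statements then reduce to identifying the $\g{a}$-component of these brackets, and the whole argument rests on pairing with an arbitrary $H\in\g{a}$ and invoking that $B\rvert_{\g{a}\times\g{a}}$ is non-degenerate (indeed positive definite, since $\g{a}\subset\g{p}$).

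For part~\eqref{lemma:berndt:sanmartin:i}, I would first show that $[\theta X,X]$ actually lies in $\g{a}$, not merely in $\g{g}_0$. This follows from $\theta$-equivariance: applying $\theta$ and using $\theta^2=\id$ gives $\theta[\theta X,X]=[X,\theta X]=-[\theta X,X]$, so $[\theta X,X]$ is a $(-1)$-eigenvector of $\theta$, hence lies in $\g{g}_0\cap\g{p}=\g{a}$ (the $\g{p}$-part of $\g{g}_0$ being exactly $\g{a}$). To pin it down, I would pair with an arbitrary $H\in\g{a}$, using the $\ad$-invariance of $B$ and the fact that $\theta X\in\g{g}_{-\lambda}$ satisfies $[H,\theta X]=-\lambda(H)\theta X$:
\[
B([\theta X,X],H)=-B(X,[\theta X,H])=-\lambda(H)B(\theta X,X)=\lambda(H)\langle X,X\rangle_{B_\theta},
\]
where the last equality uses $\langle X,X\rangle_{B_\theta}=-B(\theta X,X)$. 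Since $B(H_\lambda,H)=\lambda(H)$, the right-hand side equals $B(\langle X,X\rangle_{B_\theta}H_\lambda,H)$ for all $H\in\g{a}$, and non-degeneracy of $B$ on $\g{a}$ forces $[\theta X,X]=\langle X,X\rangle_{B_\theta}H_\lambda$. The remaining identity $\langle X,X\rangle_{B_\theta}=2\langle X,X\rangle$ is immediate from~\eqref{eq:relation:inner}, since $X\in\g{g}_\lambda\subset\g{n}$.

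For part~\eqref{lemma:berndt:sanmartin:ii}, since $[\theta X,Y]\in\g{g}_0=\g{k}_0\oplus\g{a}$ and this decomposition is $B$-orthogonal (as $\g{k}_0\subset\g{k}$ and $\g{a}\subset\g{p}$), it suffices to show the $\g{a}$-component vanishes. Repeating the pairing computation with $H\in\g{a}$ gives $B([\theta X,Y],H)=\lambda(H)\langle X,Y\rangle_{B_\theta}$; since $X$ and $Y$ are orthogonal and both lie in $\g{n}$, relation~\eqref{eq:relation:inner} yields $\langle X,Y\rangle_{B_\theta}=0$. Thus $[\theta X,Y]$ is $B$-orthogonal to $\g{a}$, and as its $\g{k}_0$-part is automatically $B$-orthogonal to $\g{a}\subset\g{p}$, non-degeneracy of $B$ on $\g{a}$ forces the $\g{a}$-component to vanish, whence $[\theta X,Y]\in\g{k}_0$. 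The computations are short; the only mildly delicate point is the $\theta$-equivariance step in~\eqref{lemma:berndt:sanmartin:i} that places the bracket in $\g{a}$, with everything else being routine bookkeeping with the Killing form and the eigenvalue action of $\g{a}$ on root spaces.
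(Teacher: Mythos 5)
Your proof is correct: placing the brackets in $\g{g}_0$ via $\theta\g{g}_\lambda=\g{g}_{-\lambda}$, using $\theta$-equivariance to force $[\theta X,X]$ into $\g{g}_0\cap\g{p}=\g{a}$, and then identifying the $\g{a}$-components by pairing with $H\in\g{a}$ through the $\ad$-invariance of $B$ and the defining relation $B(H_\lambda,H)=\lambda(H)$ is exactly the standard argument. The paper itself offers no proof of this lemma --- it simply cites \cite[Lemma~2.3]{BS18} --- and your argument is the one that reference (and the general theory of restricted root space decompositions) uses, so there is nothing to flag.
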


\begin{lemma} \label{lemma:xi:calculations}
	Let $\xi = a H_{\alpha} + b X_{\alpha}$ be a unit vector, where $\alpha \in \Pi$ is a simple root, $X_{\alpha} \in \g{g}_{\alpha}$ is a unit vector, and $a$, $b \in \R$. Then:
	\begin{enumerate}[\rm(i)]
		\item $[\theta \xi, \xi] = -ab |\alpha|^2 X_{\alpha} + ab |\alpha|^2 \theta X_{\alpha} +2 b^2 H_{\alpha}$. \label{lemma:xi:calculations:i}
		\item $\nabla_{\xi} \xi = b^2 H_{\alpha} - ab |\alpha|^2 X_{\alpha} $. \label{lemma:xi:calculations:ii}
		\item $\nabla_H X = 0$ for all $H \in \g{a}$ and $X \in \g{a} \oplus \g{n}$.\label{lemma:xi:calculations:iii}
	\end{enumerate}
\end{lemma}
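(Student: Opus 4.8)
The three identities are all direct computations inside the restricted root space decomposition, driven by the behaviour of the Cartan involution on root spaces together with formula~\eqref{eq:inner:an:bphi} for the Levi-Civita connection and the bracket identities of Lemma~\ref{lemma:berndt:sanmartin}. The facts I would invoke repeatedly are that $\theta H=-H$ for $H\in\g{a}\subset\g{p}$, that $\theta\g{g}_\lambda=\g{g}_{-\lambda}$, that $[H_\alpha,X_\alpha]=\alpha(H_\alpha)X_\alpha=|\alpha|^2X_\alpha$ (using $\alpha(H_\alpha)=B(H_\alpha,H_\alpha)=|\alpha|^2$), and that distinct restricted root spaces are mutually orthogonal with respect to $\langle\cdot,\cdot\rangle_{B_\theta}$. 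This last point is what will let me discard, in parts~(ii) and~(iii), the components lying in negative root spaces when pairing against vectors of $\g{a}\oplus\g{n}$.

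For part~(i) the plan is to write $\theta\xi=-aH_\alpha+b\,\theta X_\alpha$ and expand $[\theta\xi,\xi]$ by bilinearity into four terms. The term $[H_\alpha,H_\alpha]$ vanishes; the two mixed terms are handled by $[H_\alpha,X_\alpha]=|\alpha|^2X_\alpha$ and $[H_\alpha,\theta X_\alpha]=-|\alpha|^2\theta X_\alpha$, producing the $X_\alpha$ and $\theta X_\alpha$ contributions; and the remaining term $[\theta X_\alpha,X_\alpha]$ is evaluated by the first identity of Lemma~\ref{lemma:berndt:sanmartin}, which for the unit vector $X_\alpha$ equals $2H_\alpha$. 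Collecting the four contributions yields exactly the stated expression.

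For part~(ii) I would insert $X=Y=\xi$ into~\eqref{eq:inner:an:bphi}. Since $[\xi,\xi]=0$ and $[\xi,\theta\xi]=-[\theta\xi,\xi]$, the bracket combination collapses to $2[\theta\xi,\xi]$, giving $\langle\nabla_\xi\xi,Z\rangle=\tfrac12\langle[\theta\xi,\xi],Z\rangle_{B_\theta}$ for every $Z\in\g{a}\oplus\g{n}$. Substituting the formula from~(i), the $\theta X_\alpha$-summand lies in $\g{g}_{-\alpha}$ and is thus $\langle\cdot,\cdot\rangle_{B_\theta}$-orthogonal to all of $\g{a}\oplus\g{n}$, so it drops out. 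It then remains to convert the surviving $\g{a}$- and $\g{n}$-components from $\langle\cdot,\cdot\rangle_{B_\theta}$ to $\langle\cdot,\cdot\rangle$ via~\eqref{eq:relation:inner}: the $\g{a}$-part $2b^2H_\alpha$ is halved to $b^2H_\alpha$, whereas for the $\g{n}$-part the factor $\tfrac12$ in~\eqref{eq:relation:inner} cancels the overall $\tfrac12$, leaving $-ab|\alpha|^2X_\alpha$ unchanged; together these give the claimed $\nabla_\xi\xi$. For part~(iii) I would apply~\eqref{eq:inner:an:bphi} with $X=H\in\g{a}$: because $\theta H=-H$, the first two brackets cancel, so $\langle\nabla_H X,Z\rangle=-\tfrac14\langle[H,\theta X],Z\rangle_{B_\theta}$; decomposing $X$ along root spaces shows $[H,\theta X]\in\bigoplus_{\mu\in\Sigma^+}\g{g}_{-\mu}=\theta\g{n}$, which is $\langle\cdot,\cdot\rangle_{B_\theta}$-orthogonal to $\g{a}\oplus\g{n}$, so the pairing vanishes for all $Z$ and hence $\nabla_H X=0$.

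The computations are routine, and the single genuine obstacle is the bookkeeping with the two inner products in part~(ii): one must apply the asymmetric conversion~\eqref{eq:relation:inner}, which rescales the $\g{n}$-part but not the $\g{a}$-part, and simultaneously use orthogonality of distinct root spaces to justify discarding the $\g{g}_{-\alpha}$-component. Getting both the scaling factor and this projection right is exactly what produces the precise coefficients in the statement.
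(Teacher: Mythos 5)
Your proposal is correct and follows essentially the same route as the paper: part (i) by bilinear expansion using $[H_\alpha,X_\alpha]=|\alpha|^2X_\alpha$ and Lemma~\ref{lemma:berndt:sanmartin}~(i), and parts (ii)--(iii) by pairing via~\eqref{eq:inner:an:bphi}, discarding negative root space components by $\langle\cdot,\cdot\rangle_{B_\theta}$-orthogonality, and converting with~\eqref{eq:relation:inner}. The coefficient bookkeeping between the two inner products is handled exactly as in the paper's proof.
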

\begin{proof}
	First, using the facts that $\g{a}$ is abelian, $\theta \rvert_{\g{a}} = - \id$, $\theta \g{g}_{\lambda} = \g{g}_{-\lambda}$ for all $\lambda \in \Sigma$, the definition of restricted root space, and Lemma~\ref{lemma:berndt:sanmartin}~(\ref{lemma:berndt:sanmartin:i}), we deduce
	\begin{align*}
	[\theta \xi, \xi] & = [\theta (a H_{\alpha} + b X_{\alpha}), a H_{\alpha} + b X_{\alpha}] = -ab[H_{\alpha}, X_{\alpha}] + ab [\theta X_{\alpha}, H_{\alpha}] + b^2 [\theta X_{\alpha}, X_{\alpha}]\\
	& = -ab |\alpha|^2 X_{\alpha} + ab |\alpha|^2 \theta X_{\alpha} +2 b^2 H_{\alpha},
	\end{align*}
	which proves assertion~(\ref{lemma:xi:calculations:i}). Now, if $Z \in \g{a} \oplus \g{n}$, using~\eqref{eq:inner:an:bphi}, assertion~(\ref{lemma:xi:calculations:i}), and~\eqref{eq:relation:inner}, we get 
	\begin{align*}
	\langle \nabla_{\xi} \xi, Z \rangle & = \frac{1}{4} \langle [\xi, \xi] + [\theta \xi, \xi] - [\xi,\theta \xi]  , Z \rangle_{B_{\theta}} 
	\\
	& = \frac{1}{2} \langle [\theta \xi, \xi], Z \rangle_{B_\theta}
	= \frac{1}{2} \langle -ab |\alpha|^2 X_{\alpha} + ab |\alpha|^2 \theta X_{\alpha} +2 b^2 H_{\alpha} , Z \rangle_{B_{\theta}} \\
	& = -\frac{1}{2}ab |\alpha|^2 \langle  X_{\alpha}, Z \rangle_{B_\theta}+  b^2\langle   H_{\alpha} , Z \rangle_{B_{\theta}} = -ab |\alpha|^2  \langle  X_{\alpha}, Z \rangle + b^2  \langle   H_{\alpha} , Z \rangle.
	\end{align*}
	This proves assertion~(\ref{lemma:xi:calculations:ii}). Finally, using again~\eqref{eq:inner:an:bphi}, we have
	\begin{align*}
	\langle \nabla_{H} X, Z \rangle & = \frac{1}{4} \langle [H, X] + [\theta H, X] - [H, \theta X]  , Z \rangle_{B_{\theta}} = 0, 
	\end{align*}
	for all $X$, $Z \in \g{a} \oplus \g{n}$ and all $H \in \g{a}$. This finishes the proof.
\end{proof}

\subsection{Proof of the classification result}\label{subsec:symmetric_proof}\hfill

Let $M\cong G/K$ be an irreducible symmetric space of non-compact type, and let $AN$ be the solvable part of an Iwasawa decomposition of $G$. 
Let $S$ be a codimension one subgroup of $AN$ with Lie algebra $\g{s}$. Then there exists a unit vector $\xi \in \g{a} \oplus \g{n}$ such that $\g{s} =  (\g{a} \oplus \g{n}) \ominus \R \xi$. We have (see~\cite[Lemma~5.3]{BT:jdg}):
\begin{lemma}\label{lemma:JDG}
If  $\g{s} =  (\g{a} \oplus \g{n}) \ominus \R \xi$ is a Lie subalgebra of $\g{a}\oplus\g{n}$, then  $\xi\in\g{a}$ or $\xi\in\R H_\alpha\oplus\g{g}_\alpha$ for some simple root $\alpha\in \Pi$.
\end{lemma}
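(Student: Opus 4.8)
The plan is to translate the subalgebra condition into an orthogonality condition and then exploit the restricted root space decomposition. Write $\xi = H_0 + E$ with $H_0\in\g{a}$ and $E=\sum_{\lambda\in\Sigma^+}\xi_\lambda\in\g{n}$, where $\xi_\lambda\in\g{g}_\lambda$, and set $\Lambda=\{\lambda\in\Sigma^+:\xi_\lambda\neq0\}$. Since $\g{s}=(\g{a}\oplus\g{n})\ominus\R\xi$, the requirement that $\g{s}$ be a subalgebra is equivalent to $\langle[X,Y],\xi\rangle=0$ for all $X,Y\in\g{s}$. Because $[\g{a}\oplus\g{n},\g{a}\oplus\g{n}]\subseteq\g{n}$ and $\g{n}\perp\g{a}$, only the nilpotent part $E$ of $\xi$ enters this pairing, so the condition reads $\langle[X,Y],E\rangle=0$ for all $X,Y\in\g{s}$. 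If $E=0$ then $\xi\in\g{a}$ and we are in the first alternative; from now on I assume $E\neq0$ and aim to show that $\Lambda=\{\alpha\}$ for a single simple root $\alpha$ and that $H_0\in\R H_\alpha$.

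First I would locate $E$ inside the simple root spaces. The intersection $\g{s}\cap\g{n}=\g{n}\ominus\R E$ is a codimension-one subalgebra of the nilpotent algebra $\g{n}$, hence an ideal of $\g{n}$ (in a nilpotent Lie algebra every subalgebra is properly contained in its normalizer, so a codimension-one subalgebra is automatically an ideal). Consequently the quotient $\g{n}/(\g{s}\cap\g{n})$ is one-dimensional and therefore abelian, which forces $[\g{n},\g{n}]\subseteq\g{s}\cap\g{n}$, i.e. $E\perp[\g{n},\g{n}]$. Since $\g{n}$ is generated by its simple root spaces, one has $[\g{n},\g{n}]=\bigoplus_{\mathrm{ht}(\lambda)\geq2}\g{g}_\lambda$, whose orthogonal complement in $\g{n}$ is exactly $\bigoplus_{\alpha\in\Pi}\g{g}_\alpha$. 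Hence $E\in\bigoplus_{\alpha\in\Pi}\g{g}_\alpha$; that is, $\Lambda\subseteq\Pi$ and $E=\sum_{\alpha\in\Lambda}\xi_\alpha$ is supported on simple root spaces.

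With $E$ supported on simple root spaces, the relevant brackets simplify. Writing $X=X_{\g{a}}+\sum_\mu X_\mu$ and $Y=Y_{\g{a}}+\sum_\nu Y_\nu$, the $\g{g}_\alpha$-component of $[X,Y]$ for a simple $\alpha$ can only come from $[X_{\g{a}},Y_\alpha]$ and $[X_\alpha,Y_{\g{a}}]$, since no pair of positive roots adds up to a simple root; this component equals $\alpha(X_{\g{a}})Y_\alpha-\alpha(Y_{\g{a}})X_\alpha$. Thus the subalgebra condition becomes
\[
\sum_{\alpha\in\Lambda}\bigl(\alpha(X_{\g{a}})\langle Y_\alpha,\xi_\alpha\rangle-\alpha(Y_{\g{a}})\langle X_\alpha,\xi_\alpha\rangle\bigr)=0\qquad\text{for all }X,Y\in\g{s},
\]
where membership $X\in\g{s}$ amounts to the single linear constraint $\langle X_{\g{a}},H_0\rangle+\sum_{\alpha\in\Lambda}\langle X_\alpha,\xi_\alpha\rangle=0$. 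Now I would exploit the available freedom: taking $X$ with $X_{\g{a}}=0$ and arbitrary projections $\langle X_\alpha,\xi_\alpha\rangle$ subject only to summing to zero forces, for every $Y$, the numbers $\alpha(Y_{\g{a}})$ to be independent of $\alpha\in\Lambda$; letting $Y_{\g{a}}$ range over $\g{a}$ and using that distinct simple roots are linearly independent functionals, this is impossible unless $|\Lambda|=1$. Once $\Lambda=\{\alpha\}$, the constraint gives $\langle X_\alpha,\xi_\alpha\rangle=-\langle X_{\g{a}},H_0\rangle$, and substituting reduces the displayed identity to $\alpha(X_{\g{a}})\langle H_0,Y_{\g{a}}\rangle=\alpha(Y_{\g{a}})\langle H_0,X_{\g{a}}\rangle$ for all $X_{\g{a}},Y_{\g{a}}\in\g{a}$; since $\alpha=\langle H_\alpha,\cdot\rangle$ on $\g{a}$, this holds iff $H_0\in\R H_\alpha$. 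Therefore $\xi\in\R H_\alpha\oplus\g{g}_\alpha$, as desired.

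The main obstacle is this final forcing step rather than any single computation: one must extract both $|\Lambda|=1$ and $H_0\parallel H_\alpha$ purely from the bilinear identity, which requires keeping careful track of exactly how much freedom the constraint $\langle X,\xi\rangle=0$ leaves — in particular that the $\g{a}$-direction and the various simple-root directions are coupled only through one scalar equation, and that $\g{g}_\alpha$ may be higher-dimensional, so that $\xi_\alpha$ selects only one line inside it. The two structural inputs used along the way (that a codimension-one subalgebra of a nilpotent Lie algebra is an ideal, and that $\g{n}$ is generated by its simple root spaces, giving $[\g{n},\g{n}]=\bigoplus_{\mathrm{ht}\geq2}\g{g}_\lambda$) are standard, but they deserve to be isolated clearly, since the entire reduction to simple root spaces rests upon them.
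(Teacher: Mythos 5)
Your proof is correct. Note that the paper itself does not prove this lemma: it simply cites \cite[Lemma~5.3]{BT:jdg}, so there is no in-paper argument to compare against; your derivation is, in essence, the standard one behind that reference. The skeleton is sound: the subalgebra condition reduces to $\langle [X,Y],E\rangle=0$ for the nilpotent part $E$ of $\xi$; the intersection $\g{s}\cap\g{n}=\g{n}\ominus\R E$ is a codimension-one subalgebra of the nilpotent algebra $\g{n}$, hence an ideal with abelian quotient, giving $E\perp[\g{n},\g{n}]$; and the two forcing steps ($|\Lambda|=1$ and $H_0\in\R H_\alpha$) are carried out correctly, the key point being that $Y_{\g{a}}$ really does range over all of $\g{a}$ because membership in $\g{s}$ is a single scalar constraint that the $\g{n}$-component can always absorb (this uses $E\neq 0$, which you have in the relevant case). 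The one input you should pin down with a precise reference is the equality $[\g{n},\g{n}]=\bigoplus_{\mathrm{ht}(\lambda)\geq 2}\g{g}_\lambda$: the inclusion $\subseteq$ is immediate from $[\g{g}_\lambda,\g{g}_\mu]\subseteq\g{g}_{\lambda+\mu}$, but the reverse inclusion amounts to $[\g{g}_\mu,\g{g}_\alpha]=\g{g}_{\mu+\alpha}$ whenever $\alpha$ is simple and $\mu,\mu+\alpha\in\Sigma^+$, and for \emph{restricted} root systems (where root spaces can have dimension greater than one and the system may be non-reduced, with Cartan integers as negative as $-2$ or $-3$) this is a genuine theorem requiring the full $\alpha$-string analysis rather than a one-line $\g{sl}(2)$ observation. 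It is true and standard --- it is exactly the statement that $\g{n}$ is generated by the simple restricted root spaces, used throughout the Berndt--Tamaru theory --- but since your entire reduction to $\Lambda\subseteq\Pi$ rests on it, it deserves an explicit citation. With that supplied, your argument is complete.
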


The subgroup $S$ of $AN$, with the induced metric, is by definition an algebraic Ricci soliton if the Ricci tensor $\Ric$ of $S$ can be written as $\Ric = c\id + D$, where $c \in \R$ and $D$ is a derivation of $\g{s}$. 
Since an irreducible symmetric space of non-compact type is Einstein with negative scalar curvature, the Ricci operator of $M$ satisfies $\Ric^M = k \id$ for some $k<0$. Therefore, by~\eqref{eq:Ric_Gauss}, studying when $S$ is an algebraic Ricci soliton is equivalent to determining when the endomorphism 
\begin{equation}\label{eq:operator:d}
D := \tr (\Ss_{\xi}) \Ss_{\xi} - (R_{\xi} + \Ss_{\xi}^2) + c  \id
\end{equation}
of $\g{s}$ is a derivation, for some $c\in\R$. 

Lemma~\ref{lemma:JDG} indicates that we have to consider two cases. The first one (corresponding to $\xi\in\g{a}$) has been addressed in~\cite[Proposition~5.2]{CHKTT18}, but we include a direct proof here for the sake of completeness.

\begin{remark}\label{rem:horospheres}
	The orbits of the connected Lie subgroup $S$ of $AN$ with Lie algebra $\g{s}=(\g{a}\ominus \R H)\oplus\g{n}$, for some unit vector $H$ in the  Weyl chamber $\{H\in\g{a}:\lambda(H)\geq 0 \text{ for all }\lambda\in\Sigma^+\}$, are horospheres of the symmetric space $M$, meaning that they are level sets of a~Busemann function~\cite[\S1.10]{eberlein}. Indeed, from Lemma~\ref{lemma:xi:calculations}~(\ref{lemma:xi:calculations:ii}) we get that $H$ is a geodesic vector field. Let $\gamma$ be one of the geodesic integral curves of $H$. As $H$ is left-invariant, any~other integral curve of $H$ is of the form $g\circ\gamma$, for some $g\in AN$. 
	The group $AN$ is contained in the parabolic subgroup of $G$ given by the stabilizer of the point at infinity $\lim_{t\to\infty}\gamma(t)$ in the ideal boundary of $M$ (see \cite[\S2.17]{eberlein}).
	Hence, all integral curves~of~$H$ are geodesics asymptotic to such common point at infinity. Consider the Busemann function $f_H(p)=\lim_{t\to+\infty}(d(p,\gamma(t))-t)$ on $M$ associated with~$\gamma$. By \cite[\S1.10.2(2)]{eberlein}, we have $\mathrm{grad}\, f_H=-H$ everywhere. Thus, the level sets of $f_H$ are everywhere orthogonal to $H$, which implies that they are precisely the $S$-orbits. Finally, note that the $S$-orbits are mutually congruent~(cf.~\cite{BT:jdg}).
\end{remark}

\begin{proposition}\label{prop:horosphere}
	Let $M$ be an irreducible symmetric space of non-compact type with solvable Iwasawa group $AN$. Let $S$ be the connected Lie subgroup of $AN$ with Lie algebra $\g{s} = (\g{a} \ominus \R H) \oplus \g{n}$, for some unit vector $H \in \g{a}$. Then $S$ is a solvsoliton.
\end{proposition}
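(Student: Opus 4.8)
The plan is to verify directly that the endomorphism $D$ in~\eqref{eq:operator:d} is a derivation of $\g{s}$ for a suitable $c$, exploiting that here $\xi = H \in \g{a}$ and that $M$ is Einstein, say $\Ric^M = k\id$. I expect both the shape operator and the normal Jacobi operator of $S$ to be diagonal with respect to the restricted root space decomposition, and the decisive point to be an exact cancellation $R_\xi + \Ss_\xi^2 = 0$.

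\emph{Shape operator.} I would first compute $\nabla_X H$ for $X \in \g{s}$ from~\eqref{eq:inner:an:bphi}. Using $\theta H = -H$ and $\theta\g{g}_\lambda = \g{g}_{-\lambda}$, the bracket $[X,H] + [\theta X, H] - [X,\theta H]$ reduces to $2[X,H] + [\theta X, H]$; since $\theta X$ lies in a negative root space, it is $\langle\cdot,\cdot\rangle_{B_\theta}$-orthogonal to all of $\g{a}\oplus\g{n}$, so the $\theta X$-term drops out when paired against any $Z \in \g{a}\oplus\g{n}$. Converting between $\langle\cdot,\cdot\rangle_{B_\theta}$ and $\langle\cdot,\cdot\rangle$ via~\eqref{eq:relation:inner}, I expect to obtain $\nabla_X H = -\lambda(H)X$ for $X \in \g{g}_\lambda$ and $\nabla_X H = 0$ for $X \in \g{a}\ominus\R H$. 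Consequently $\Ss_\xi X = -\nabla_X H$ coincides with the restriction $\ad(H)\rvert_\g{s}$: it vanishes on $\g{a}\ominus\R H$ and equals $\lambda(H)\id$ on each $\g{g}_\lambda$.

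\emph{Jacobi operator and cancellation.} Here the key tool is Lemma~\ref{lemma:xi:calculations}~(\ref{lemma:xi:calculations:iii}), which gives $\nabla_H = 0$ on all of $\g{a}\oplus\g{n}$. Expanding $R_\xi Z = R^M(Z,H)H = \nabla_Z\nabla_H H - \nabla_H\nabla_Z H - \nabla_{[Z,H]}H$, the first two summands vanish, leaving $R_\xi Z = -\nabla_{[Z,H]}H$. For $Z \in \g{g}_\lambda$ one has $[Z,H] = -\lambda(H)Z$, so $R_\xi Z = -\lambda(H)^2 Z$, while $R_\xi$ annihilates $\g{a}\ominus\R H$. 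Comparing with $\Ss_\xi^2$ yields $R_\xi + \Ss_\xi^2 = 0$, whence $D = \tr(\Ss_\xi)\Ss_\xi + c\id$; equivalently, by~\eqref{eq:Ric_Gauss}, $\Ric = k\id + \tr(\Ss_\xi)\Ss_\xi$.

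\emph{Conclusion.} Taking $c = k$, it remains to see that $\tr(\Ss_\xi)\Ss_\xi$ is a derivation of $\g{s}$. Since $\Ss_\xi = \ad(H)\rvert_\g{s}$ and $\ad(H)$ is a derivation of $\g{g}$ that stabilizes $\g{s}$ (it preserves each root space and kills $\g{a}\ominus\R H$), its restriction is a derivation of $\g{s}$, and any real multiple of a derivation is again a derivation. Thus $S$ is an algebraic Ricci soliton, and being solvable it is a solvsoliton. The only genuinely delicate step is the shape-operator computation, where one must carefully keep track of the two inner products and confirm that the terms involving $\theta X$ contribute nothing; once this is in place, the cancellation $R_\xi + \Ss_\xi^2 = 0$ and the derivation property follow formally.
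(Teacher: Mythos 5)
Your proposal is correct and follows essentially the same route as the paper's proof: compute $\Ss_H=\ad(H)\rvert_{\g{s}}$ from~\eqref{eq:inner:an:bphi}, use Lemma~\ref{lemma:xi:calculations}~(iii) to get $R_H=-\ad(H)^2$ so that $R_H+\Ss_H^2=0$, and conclude that $\Ric=k\id+\tr(\ad(H))\ad(H)$ with $\tr(\ad(H))\ad(H)$ a derivation of $\g{s}$. All the individual computations (the vanishing of the $\theta X$-term, the factor bookkeeping between the two inner products, and the invariance of $\g{s}$ under $\ad(H)$) check out.
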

\begin{proof}
Let $X \in \g{n}$ and $Y\in\g{a}\oplus\g{n}$. Then, using~\eqref{eq:inner:an:bphi} and~\eqref{eq:relation:inner} we have
\begin{align*}
\langle \nabla_X H, Y \rangle & = \frac{1}{4} \langle [X, H] + [\theta X, H] - [X, \theta H], Y \rangle_{B_\theta}  = - \frac{1}{2} \langle [H, X] , Y \rangle_{B_\theta} = -\langle [H, X] , Y \rangle.
\end{align*}
This means that $\Ss_{H} X =  \ad(H) X$ for all $X \in \g{n}$. Now, using this and Lemma~\ref{lemma:xi:calculations}~(\ref{lemma:xi:calculations:iii}) twice, we have
\begin{align*}
R_H (X) = \nabla_X \nabla_H H - \nabla_H \nabla_X H - \nabla_{[X, H]} H = - \Ss_H [H, X] = -\ad(H)^2 X,
\end{align*} 
for all $X \in \g{n}$. Moreover, using Lemma~\ref{lemma:xi:calculations}~(\ref{lemma:xi:calculations:iii}) we easily get  $\Ss_{H} \rvert_{\g{a} \ominus \R H } = R_{H} \rvert_{\g{a} \ominus \R H} = 0$. All in all, we deduce  $\Ss_H=  \ad(H)$ and $R_H=-\ad(H)^2$. Therefore, we get 
\[
\Ric = \Ric^M +  \tr (\Ss_{H}) \Ss_{H} - R_{H} - \Ss_{H}^2 = k\id +  \tr (\ad(H)) \ad(H) ,
\]
which is precisely the sum of a derivation of $\g{s}$ and a multiple of the identity.
\end{proof}

\medskip
From now one we will focus on dealing with the second case in Lemma~\ref{lemma:JDG}. More precisely, the rest of this section will be devoted to prove the following:

\begin{theorem}\label{th:xi_diagonal}
	Let $\xi = a H_{\alpha} + b X_{\alpha}$ be a unit vector, where $X_{\alpha} \in \g{g}_{\alpha}$ is a unit vector, $\alpha \in \Pi$ a simple root, and $a$, $b \in \R$, $b\neq 0$. Let $S$ be the connected Lie subgroup of $AN$ with Lie algebra $\g{s} = (\g{a} \oplus \g{n}) \ominus \R \xi$. If the hypersurface $S$, with the induced metric, is an algebraic Ricci soliton, then $\dim \g{a} = 1$, that is, $M$ is a rank one symmetric space.
\end{theorem}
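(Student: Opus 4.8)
The plan is to argue by contradiction: assume that $S$ is an algebraic Ricci soliton and that $\dim\g{a}\geq 2$, and produce incompatible constraints. By \eqref{eq:operator:d} the soliton hypothesis means precisely that
$D=\tr(\Ss_\xi)\Ss_\xi-(R_\xi+\Ss_\xi^2)+c\,\id$
is a derivation of $\g{s}$ for some $c\in\R$, so the whole proof is an analysis of the identity $D[X,Y]=[DX,Y]+[X,DY]$ read off from the restricted root space decomposition. The rank-one case is excluded from the start only because, as we shall see, the key leverage comes from the directions of $\g{a}$ transverse to $\alpha$, which exist exactly when $\dim\g{a}\geq 2$.

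The decisive first step is to exploit $\g{a}\ominus\R H_\alpha$. Using \eqref{eq:inner:an:bphi}, Lemma~\ref{lemma:xi:calculations}~(\ref{lemma:xi:calculations:iii}) and $\theta|_\g{a}=-\id$, I would show that $\Ss_\xi H=0$ and $R_\xi H=0$ for every $H\in\g{a}\ominus\R H_\alpha$, since $\alpha(H)=0$ makes all the relevant brackets of $H$ with $\xi$ vanish; hence $DH=c\,H$ there. Choosing a root $\mu$ with $\mu|_{\g{a}\ominus\R H_\alpha}\neq 0$ (one exists because the restricted roots span $\g{a}^*$), an $H$ with $\mu(H)\neq0$ and $0\neq Y\in\g{g}_\mu$, the derivation identity on $[H,Y]=\mu(H)Y$ reads $\mu(H)DY=c\,\mu(H)Y+[H,DY]$; projecting onto $\g{g}_\mu$ the terms $\mu(H)(DY)_\mu$ cancel and one is left with $0=c\,\mu(H)\,Y$, forcing $c=0$. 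Reading the remaining components of the same identity shows that $D$ preserves each $\alpha$-string $\bigoplus_j\g{g}_{\mu+j\alpha}$; in particular $D$ preserves the $\alpha$-local subspace $\R H_\alpha\oplus\g{g}_\alpha\oplus\g{g}_{2\alpha}$.

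With $c=0$ forced, the contradiction must come from the directions attached to $\alpha$. Writing $\zeta$ for the unit vector of $\R H_\alpha\oplus\R X_\alpha$ orthogonal to $\xi$, I would compute from \eqref{eq:inner:an:bphi} and Lemma~\ref{lemma:berndt:sanmartin} that $\Ss_\xi\zeta=a|\alpha|^2\zeta$ and $R_\xi\zeta=-|\alpha|^2\zeta$ (up to the normalization in \eqref{eq:relation:inner}), with the same eigenvalues on $\g{g}_\alpha\ominus\R X_\alpha$ when $2\alpha\notin\Sigma$, together with the mean curvature $\tr\Ss_\xi=\tfrac a2\sum_{\lambda\in\Sigma^+}\langle\lambda,\alpha\rangle\dim\g{g}_\lambda$. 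The derivation identity on $[\zeta,\g{g}_\alpha\ominus\R X_\alpha]$ then forces the common eigenvalue of $D$ on $\zeta$ and $\g{g}_\alpha\ominus\R X_\alpha$ to vanish, yielding one algebraic relation between $a$, $|\alpha|^2$ and the root data. When $2\alpha\in\Sigma$ the $\alpha$-local subalgebra is, up to homothety, the solvable part of a rank-one Damek--Ricci space, and since $b\neq0$ the normal $\xi$ has nonzero $\g{g}_\alpha$-component; restricting the derivation $D$ to this subalgebra and invoking Proposition~\ref{prop:DR_2cases} forces $a=0$ and a definite nonzero value for the associated constant, which together with $c=0$ already closes this case.

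The genuinely hard part is the reduced case $2\alpha\notin\Sigma$ in higher rank, where the single relation above only constrains $a$ and does not yet contradict $b\neq0$. Here one must bring in a simple root $\beta$ adjacent to $\alpha$ and study the brackets coupling $\zeta$ and $\g{g}_\alpha$ with the root spaces of the $\alpha$-string through $\beta$. The main obstacle is exactly that $\Ss_\xi$ and $R_\xi$ are no longer diagonal there: they shift the root by $\pm\alpha$ and couple $\g{g}_\beta,\g{g}_{\beta+\alpha},\dots$, so $D$ acts on each string by a genuine string-preserving matrix whose entries have to be extracted from the connection \eqref{eq:inner:an:bphi} and the curvature of $M$. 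The plan is to evaluate the derivation identity on $[\zeta,\g{g}_\beta]$ and $[\g{g}_\alpha\ominus\R X_\alpha,\g{g}_\beta]$, isolate the diagonal components of the resulting matrix equations, and show that, combined with $c=0$ and the mean-curvature value, they admit no solution with $b\neq0$ for any admissible restricted root system. Carrying out this root-string bookkeeping uniformly — the point where the difficulty of the higher-rank extrinsic geometry concentrates — is the crux that completes the proof.
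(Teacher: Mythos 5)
Your first step (showing $DH=cH$ on $\g{a}\ominus\R H_\alpha$ and then forcing $c=0$ by evaluating the derivation identity on $[H,Y_\mu]$ with $\alpha(H)=0$, $\mu(H)\neq0$) is essentially the paper's argument and is sound. The problem is everything after that: the conclusion $b=0$ is never actually derived. You yourself flag the case $2\alpha\notin\Sigma$ in higher rank as ``the crux that completes the proof'' and leave it as a plan (``isolate the diagonal components \dots and show that \dots they admit no solution''), so the proposal does not contain a proof of the theorem. Moreover, two of the intermediate claims are doubtful as stated. First, it is not clear that the derivation identity on $[U,\g{g}_\alpha\ominus\R X_\alpha]$ ``forces the common eigenvalue of $D$ to vanish'': that bracket lands in $\g{g}_\alpha\oplus\g{g}_{2\alpha}$, where $\Ss_\xi$ and $R_\xi$ are genuinely non-diagonal, and you never compute $D$ there. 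Second, the appeal to Proposition~\ref{prop:DR_2cases} when $2\alpha\in\Sigma$ is not legitimate: that proposition computes the Ricci operator of a hypersurface of a Damek--Ricci space from the Damek--Ricci \emph{ambient} geometry, whereas the restriction of your $D$ to $\R H_\alpha\oplus\g{g}_\alpha\oplus\g{g}_{2\alpha}$ involves $\tr\Ss_\xi$ taken over all of $\g{s}$ and the Jacobi operator of the full higher-rank symmetric space, so it is not the operator of Proposition~\ref{prop:DR_2cases}.

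For comparison, the paper avoids the $\alpha$-string through $\alpha$ entirely and gets the contradiction from a single, short computation on $\g{g}_\beta\oplus\g{g}_{\beta+\alpha}$ for a simple root $\beta$ adjacent to $\alpha$. The enabling fact, which your sketch does not anticipate, is that $(R_\xi+\Ss_\xi^2)Y_\lambda=0$ for every $Y_\lambda\in\g{g}_\lambda$ with $\lambda\in\Sigma^+\setminus\{\alpha\}$ (Proposition~\ref{proposition:r:s:n}); hence on $\g{g}_\beta\oplus\g{g}_{\beta+\alpha}$ one simply has $D=\tr(\Ss_\xi)\,\Ss_\xi+c\,\id$, and $\Ss_\xi$ there is an explicit $2\times2$-type matrix in the string basis (Proposition~\ref{proposition:shape:operator:n}). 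Pairing the derivation identity $D[U,Y_\beta]=[DU,Y_\beta]+[U,DY_\beta]$ with $Y_\beta$, and using $DU=(a\tr(\Ss_\xi)+b^2)|\alpha|^2U$ together with $c=0$, everything cancels except $b^3|\alpha|^3A_{\alpha,\beta}=0$, and $A_{\alpha,\beta}\neq0$ gives $b=0$. If you want to salvage your route, you would need to actually carry out the string bookkeeping you defer; the paper's Proposition~\ref{proposition:r:s:n} is the observation that makes this unnecessary.
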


Thus, we will assume from now on that $M$ is an irreducible symmetric space of non-compact type and rank greater than one, $\g{s} =  (\g{a} \oplus \g{n}) \ominus \R \xi$ is a Lie subalgebra  of $\g{a}\oplus\g{n}$ with $\xi = a H_{\alpha} + b X_{\alpha}$ for some simple root $\alpha \in \Pi$, some unit vector $X_{\alpha} \in \g{g}_{\alpha}$, and $a$, $b \in \R$. Note that, since $\xi$ and $X_{\alpha}$ are unit vectors, we also have that $a^2 |\alpha|^2 + b^2 =1$. Our goal will be to prove that, if $S$ is an algebraic Ricci soliton (that is, if the endomorphism $D$ given in~\eqref{eq:operator:d} is a derivation), then $b = 0$. 

Let us consider the vector 
\[
U := b |\alpha|^{-1} H_{\alpha} -a |\alpha| X_{\alpha}.
\] Note that $\langle U, U \rangle = 1$ and $\langle U, \xi \rangle = 0$. Thus, we can consider the $\langle\cdot,\cdot\rangle$-orthogonal decomposition $\g{s} = (\g{a} \ominus \R H_{\alpha}) \oplus (\g{n} \ominus \R X_{\alpha}) \oplus \R U$ of the tangent space to $S$ at the identity.

Our efforts will be focused on calculating the endomorphism $D$ of $\g{s}$. Although one could determine it completely, it will be enough for our purposes to just calculate $D$ when restricted to a particular subspace.
Actually, we will select a root $\beta \in \Sigma^{+} \backslash \{ \alpha \}$ such that $\beta + \alpha\in\Sigma$ and $\beta - \alpha\notin\Sigma$. Such a root exists since $M$ is irreducible and has rank greater than one by assumption: it suffices to choose a simple root $\beta$ connected to $\alpha$ in the Dynkin diagram of $M$. Then, the main part of what follows will be devoted to determining the restriction of the endomorphism $D$ to the subspace
\begin{equation}\label{eq:decomposition:tangent}
(\g{a} \ominus \R H_{\alpha}) \oplus (\g{g}_{\beta} \oplus \g{g}_{\beta + \alpha}) \oplus \R U\subset\g{s}.
\end{equation} 

We start by analyzing the restriction of $D$ to $\g{a} \ominus \R H_{\alpha}$.
	
\begin{proposition}\label{proposition:d:a}
Let $H \in \g{a} \ominus \R H_{\alpha}$. Then $D H = c H$.
\end{proposition}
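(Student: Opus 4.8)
The plan is to show that all three ``curvature pieces'' of the endomorphism $D$ in \eqref{eq:operator:d} annihilate $H$, so that only the $c\,\id$ term survives. The starting observation is that orthogonality to $H_\alpha$ inside $\g{a}$ is the same as $\alpha$-orthogonality: by~\eqref{eq:relation:inner} the metric $\langle\cdot,\cdot\rangle$ restricts on $\g{a}$ to $\langle\cdot,\cdot\rangle_{B_\theta}$, which in turn equals the Killing form $B$ there (since $\theta\rvert_\g{a}=-\id$). As $B(H_\alpha,\cdot)=\alpha(\cdot)$, the hypothesis $H\in\g{a}\ominus\R H_\alpha$ is equivalent to $\alpha(H)=0$. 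I will use this repeatedly.

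First I would compute the shape operator on $H$. Since $\xi\in\g{a}\oplus\g{n}$ and $H\in\g{a}$, Lemma~\ref{lemma:xi:calculations}~(\ref{lemma:xi:calculations:iii}) gives $\nabla_H\xi=0$ directly, so $\Ss_\xi H=-\nabla_H\xi=0$; there is no normal component to worry about, as $\xi$ is a unit vector. Consequently $\Ss_\xi^2 H=0$ and $\tr(\Ss_\xi)\Ss_\xi H=0$, which disposes of two of the three terms.

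Next I would compute the normal Jacobi operator $R_\xi H=R^M(H,\xi)\xi=\nabla_H\nabla_\xi\xi-\nabla_\xi\nabla_H\xi-\nabla_{[H,\xi]}\xi$ summand by summand. The vector $\nabla_\xi\xi$ lies in $\g{a}\oplus\g{n}$, so the first term vanishes again by Lemma~\ref{lemma:xi:calculations}~(\ref{lemma:xi:calculations:iii}); the second vanishes because $\nabla_H\xi=0$; and for the third I note $[H,\xi]=b\,[H,X_\alpha]=b\,\alpha(H)X_\alpha=0$, since $\g{a}$ is abelian and $\alpha(H)=0$. Hence $R_\xi H=0$.

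Substituting $\Ss_\xi H=\Ss_\xi^2 H=R_\xi H=0$ into \eqref{eq:operator:d} yields $DH=cH$, as claimed. I do not expect any genuine obstacle here: the computation is short and rests entirely on Lemma~\ref{lemma:xi:calculations}~(\ref{lemma:xi:calculations:iii}) together with the vanishing of $\alpha(H)$. The only point that requires care is precisely the identification $\alpha(H)=0$ and the attendant bookkeeping of which inner product is in play when passing between $\langle\cdot,\cdot\rangle$, $\langle\cdot,\cdot\rangle_{B_\theta}$ and $B$ on $\g{a}$.
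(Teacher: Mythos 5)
Your proof is correct and follows essentially the same route as the paper's: $\Ss_\xi H=0$ via Lemma~\ref{lemma:xi:calculations}~(\ref{lemma:xi:calculations:iii}), $R_\xi H=0$ term by term using $[H,\xi]=0$, and then substitution into~\eqref{eq:operator:d}. Your explicit justification that $H\perp H_\alpha$ is equivalent to $\alpha(H)=0$ (hence $[H,X_\alpha]=0$) is a detail the paper leaves implicit, but the argument is the same.
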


\begin{proof}
Let $H\in\g{a}\ominus\R H_\alpha$ be arbitrary. From Lemma~\ref{lemma:xi:calculations}~(\ref{lemma:xi:calculations:iii}) we obtain $\Ss_{\xi} H = - \nabla_{H} \xi = 0$, and hence $\Ss_{\xi}\vert_{\g{a}\ominus \R H_{\alpha}}=\Ss_{\xi}^2\vert_{\g{a}\ominus \R H_{\alpha}}=0$. Note now that $[H, \xi] = [H, a H_\alpha + b X_{\alpha}] = 0$, since $H \in \g{a} \ominus \R H_{\alpha}$. Using this and Lemma~\ref{lemma:xi:calculations}~(\ref{lemma:xi:calculations:iii}) twice, we get
\begin{align*}
R_{\xi} (H) = R^{AN} (H, \xi) \xi  = \nabla_{H} \nabla_{\xi} \xi - \nabla_{\xi} \nabla_{H} \xi - \nabla_{[H, \xi]} \xi = 0
\end{align*}
for the Jacobi operator. All in all, from~\eqref{eq:operator:d} we conclude
\[
D H = (\tr (\Ss_{\xi}) \Ss_{\xi}  - R_{\xi} - \Ss_{\xi}^2  + c \id ) H = c H.\qedhere
\]
\end{proof}

We have just determined $D\vert_{\g{a}\ominus \R H_{\alpha}}$. Before continuing with the calculation of $D$, we will introduce two results that will allow us to use the Levi-Civita connection more efficiently.

\begin{lemma}\label{lemma:levi:civita:connection:n}
Let $X = X_\g{a} + \sum_{\gamma \in \Sigma^{+}} X_{\gamma}$ and $Y = Y_\g{a} + \sum_{\gamma \in \Sigma^{+}} Y_{\gamma}$ be vectors in $\g{a} \oplus \g{n}$, with $X_\g{a}$, $Y_\g{a} \in \g{a}$ and $X_{\gamma}$, $Y_{\gamma} \in \g{g}_{\gamma}$ for all $\gamma \in \Sigma^{+}$. If $\langle X_{\gamma}, Y_{\gamma} \rangle = 0$ for all $\gamma \in \Sigma^{+}$, then
\begin{samepage}
\[
\nabla_X Y = \frac{1}{2} ([X, Y] + [\theta X, Y] - [X, \theta Y])_{\g{n}},
\]
where $(\cdot)_{\g{n}}$ denotes the orthogonal projection onto the Lie subalgebra $\g{n}$.
\end{samepage}
\end{lemma}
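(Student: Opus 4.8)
The plan is to evaluate both sides of the claimed identity against an arbitrary test vector $Z\in\g{a}\oplus\g{n}$ and to conclude by non-degeneracy of the metric. Writing $W:=[X,Y]+[\theta X,Y]-[X,\theta Y]$, formula~\eqref{eq:inner:an:bphi} immediately gives $\langle \nabla_X Y, Z\rangle=\tfrac14\langle W, Z\rangle_{B_\theta}$ for every $Z\in\g{a}\oplus\g{n}$. Since both $\nabla_X Y$ and $\tfrac12 W_{\g{n}}$ lie in $\g{a}\oplus\g{n}$, on which $\langle\cdot,\cdot\rangle$ is non-degenerate, it suffices to show that $\tfrac14\langle W, Z\rangle_{B_\theta}=\langle \tfrac12 W_{\g{n}}, Z\rangle$ for all such $Z$. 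Thus the entire argument reduces to understanding the $B_\theta$-pairing of $W$ against vectors of $\g{a}\oplus\g{n}$.

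The key step, and the only place where the hypothesis enters, is to prove that $W$ has no component in $\g{a}$. First, $[X,Y]\in[\g{a}\oplus\g{n},\g{a}\oplus\g{n}]\subset\g{n}$ contributes nothing to $\g{a}$. For the other two brackets I would expand $X,Y$ and $\theta X,\theta Y$ into root components, using $\theta\g{g}_\gamma=\g{g}_{-\gamma}$ and $\theta\vert_{\g{a}}=-\id$, and observe from $[\g{g}_\lambda,\g{g}_\mu]\subset\g{g}_{\lambda+\mu}$ that the $\g{g}_0$-component of $[\theta X,Y]$ (resp.\ of $[X,\theta Y]$) arises only from the diagonal pairings $[\theta X_\gamma, Y_\gamma]$ (resp.\ $[X_\gamma,\theta Y_\gamma]$) with $\gamma\in\Sigma^+$. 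Polarizing Lemma~\ref{lemma:berndt:sanmartin} then shows that the $\g{a}$-part of $[\theta X_\gamma, Y_\gamma]$ equals $\langle X_\gamma, Y_\gamma\rangle_{B_\theta}H_\gamma$: part~(\ref{lemma:berndt:sanmartin:i}) handles the component of $Y_\gamma$ parallel to $X_\gamma$, while part~(\ref{lemma:berndt:sanmartin:ii}) ensures that the orthogonal complement stays inside $\g{k}_0$. By~\eqref{eq:relation:inner} the assumption $\langle X_\gamma, Y_\gamma\rangle=0$ is equivalent to $\langle X_\gamma, Y_\gamma\rangle_{B_\theta}=0$, so each such $\g{a}$-contribution vanishes and hence $W_{\g{a}}=0$. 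I expect this identification of the $\g{a}$-component via polarization to be the main obstacle, as it is where the root-system structure and the orthogonality hypothesis must be combined with care.

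It then remains to translate $B_\theta$-pairings into the metric $\langle\cdot,\cdot\rangle$. Since the restricted root space decomposition is $\langle\cdot,\cdot\rangle_{B_\theta}$-orthogonal, $\g{a}\perp_{B_\theta}\g{k}_0$, and $W_{\g{a}}=0$, for any $Z\in\g{a}\oplus\g{n}$ only the $\g{n}$-parts survive the pairing, giving $\langle W, Z\rangle_{B_\theta}=\langle W_{\g{n}}, Z_{\g{n}}\rangle_{B_\theta}$. On the other hand, applying~\eqref{eq:relation:inner} to $W_{\g{n}}\in\g{n}$ yields $\langle \tfrac12 W_{\g{n}}, Z\rangle=\tfrac14\langle W_{\g{n}}, Z_{\g{n}}\rangle_{B_\theta}$. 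Comparing with $\langle \nabla_X Y, Z\rangle=\tfrac14\langle W, Z\rangle_{B_\theta}$ gives $\langle \nabla_X Y, Z\rangle=\langle \tfrac12 W_{\g{n}}, Z\rangle$ for all $Z\in\g{a}\oplus\g{n}$, and non-degeneracy of $\langle\cdot,\cdot\rangle$ forces $\nabla_X Y=\tfrac12 W_{\g{n}}$, which is the asserted formula.
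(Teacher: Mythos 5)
Your proof is correct and follows essentially the same route as the paper: both reduce the identity to showing that $[X,Y]+[\theta X,Y]-[X,\theta Y]$ is $\langle\cdot,\cdot\rangle_{B_\theta}$-orthogonal to $\g{a}$, and then convert between the two inner products via \eqref{eq:relation:inner} and the orthogonality of the root space decomposition. The only divergence is in that orthogonality step: the paper pairs $[X,\theta Y]$ against $H\in\g{a}$ and moves the bracket across using \eqref{eq:cartan:inner}, whereas you identify the $\g{a}$-component of each diagonal term $[\theta X_\gamma,Y_\gamma]$ directly by polarizing Lemma~\ref{lemma:berndt:sanmartin}; both mechanisms are valid and lead to the same conclusion.
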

\begin{proof}
First of all, using~\eqref{eq:cartan:inner} and the assumption that $\langle X_{\gamma}$, $Y_{\gamma} \rangle = 0$ for all $\gamma \in \Sigma^{+}$, we~have
\begin{align*}
\langle [X, \theta Y], H \rangle_{B_\theta} & = \langle X, [Y, H] \rangle_{B_\theta} = - \langle X, [H, Y_\g{a} + \sum_{\gamma \in \Sigma^{+}} Y_{\gamma}] \rangle_{B_\theta} = -\sum_{\gamma \in \Sigma^{+}} \gamma(H) \langle X,   Y_{\gamma}\rangle_{B_\theta}  \\
&= -\sum_{\gamma \in \Sigma^{+}} \gamma(H) \langle X_\g{a}, Y_{\gamma} \rangle_{B_\theta} -\sum_{\gamma \in \Sigma^{+}} \sum_{\lambda \in \Sigma^{+}} \gamma(H) \langle X_\lambda,   Y_{\gamma}\rangle_{B_\theta} = 0,
\end{align*}
for all $H \in \g{a}$. This means that $[X, \theta Y]\in\g{g}$ is $\langle\cdot,\cdot\rangle_{B_\theta}$-orthogonal to $\g{a}$. Analogously, $[\theta X, Y]\in\g{g}$ is also $\langle\cdot,\cdot\rangle_{B_\theta}$-orthogonal to $\g{a}$. Since $[X, Y] \in[\g{a}\oplus\g{n},\g{a}\oplus\g{n}]=\g{n}$, then $[X, Y] + [\theta X, Y]-[X,\theta Y]$ is $\langle\cdot,\cdot\rangle_{B_\theta}$-orthogonal to $\g{a}$. Now, using this, together with~\eqref{eq:inner:an:bphi} and \eqref{eq:relation:inner}, we deduce 
\begin{align*}
\langle \nabla_X Y, Z \rangle &= \frac{1}{4} \langle [X, Y] + [\theta X, Y] - [X, \theta Y], Z   \rangle_{B_\theta} = \frac{1}{4} \langle ([X, Y] + [\theta X, Y] - [X, \theta Y])_\g{n}, Z_\g{n}  \rangle_{B_\theta}\\
&= \frac{1}{2} \langle ([X, Y] + [\theta X, Y] - [X, \theta Y])_\g{n}, Z  \rangle
\end{align*}
for all $Z \in \g{a} \oplus \g{n}$. This concludes the proof.
\end{proof}

\begin{lemma}\label{lemma:projection:xi:y}
Let $\xi = a H_{\alpha} + b X_{\alpha}$, where $\alpha \in \Pi$, $X_{\alpha} \in \g{g}_{\alpha}$, and $a$, $b \in \R$. Let $Y_{\lambda} \in \g{g}_{\lambda}$ be a vector orthogonal to $\xi$, for some $\lambda \in \Sigma^{+}$. Then:
\begin{enumerate}[\rm(i)]
	\item $\langle [\theta Y_{\lambda}, \xi], Z \rangle_{B_\theta} =\langle [\theta Y_{\lambda}, X_{\alpha}], Z \rangle_{B_\theta} =  0$ for all $Z \in \g{a} \oplus \g{n}$. \label{lemma:projection:xi:y:i}
	\item $\langle[ \theta [Y_{\lambda}, \xi], \xi], Z \rangle_{B_\theta} = 0$ for all $Z \in \g{a} \oplus \g{n}$. \label{lemma:projection:xi:y:ii}
	\item If $\lambda\neq \alpha$, then $[Y_{\lambda}, \theta \xi]$, $[[Y_{\lambda}, \xi], \theta \xi]$, $[Y_{\lambda}, \theta X_{\alpha}] \in \g{n}$.\label{lemma:projection:xi:y:iii} 
\end{enumerate}
\end{lemma}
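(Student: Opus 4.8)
The plan is to prove all three assertions by repeatedly exploiting three structural facts: the bracket relations $[\g{g}_\mu,\g{g}_\nu]\subseteq\g{g}_{\mu+\nu}$ together with $\theta\g{g}_\mu=\g{g}_{-\mu}$ and $\theta\rvert_{\g{a}}=-\id$; the adjoint identity~\eqref{eq:cartan:inner}; and, crucially, the simplicity of $\alpha$, i.e.\ the fact that $\alpha$ cannot be written as a sum of two positive roots. Assertions~(i) and~(ii) both state that a certain vector is $\langle\cdot,\cdot\rangle_{B_\theta}$-orthogonal to $\g{a}\oplus\g{n}$, so I would first record the common reduction: for any $V\in\g{n}$ and any $Z\in\g{a}\oplus\g{n}$, identity~\eqref{eq:cartan:inner} together with $\theta^2=\id$ gives
\[
\langle[\theta V,\xi],Z\rangle_{B_\theta}=-\langle\xi,[V,Z]\rangle_{B_\theta}.
\]
Since $V\in\g{n}$ and $Z\in\g{a}\oplus\g{n}$, the bracket $[V,Z]$ again lies in $\g{n}$; as $H_\alpha\in\g{a}$ is $\langle\cdot,\cdot\rangle_{B_\theta}$-orthogonal to $\g{n}$, the right-hand side reduces to $-b\langle X_\alpha,[V,Z]\rangle_{B_\theta}$, which only sees the $\g{g}_\alpha$-component of $[V,Z]$. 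Thus both parts reduce to showing that $[V,Z]$ has no $\g{g}_\alpha$-component along $X_\alpha$.

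For assertion~(i) I take $V=Y_\lambda\in\g{g}_\lambda$. Writing $Z=Z_\g{a}+\sum_{\mu\in\Sigma^+}Z_\mu$, the bracket $[Y_\lambda,Z]$ splits into a term $-\lambda(Z_\g{a})Y_\lambda\in\g{g}_\lambda$ and terms in $\g{g}_{\lambda+\mu}$. A $\g{g}_\alpha$-component can arise only if $\lambda=\alpha$ (from the first term) or $\lambda+\mu=\alpha$ for some $\mu\in\Sigma^+$, and the latter is impossible precisely because $\alpha$ is simple. Hence, when $\lambda\neq\alpha$ both the first claim of~(i) and the claim on $[\theta Y_\lambda,X_\alpha]$ (obtained from the same reduction with $\xi$ replaced by $X_\alpha$) vanish at once; when $\lambda=\alpha$ the sole surviving term is proportional to $\langle X_\alpha,Y_\alpha\rangle_{B_\theta}$, which is zero because $Y_\lambda$ is orthogonal to $\xi$ (for $\lambda=\alpha$ and $b\neq0$ this amounts exactly to $Y_\alpha\perp X_\alpha$). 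Assertion~(ii) is handled identically with $V=[Y_\lambda,\xi]$: a short computation gives $[Y_\lambda,\xi]\in\g{g}_\lambda\oplus\g{g}_{\lambda+\alpha}\subseteq\g{n}$, and the same root-space bookkeeping shows that its bracket with $\g{a}\oplus\g{n}$ acquires a $\g{g}_\alpha$-component only when $\lambda=\alpha$, again proportional to $\langle X_\alpha,Y_\alpha\rangle_{B_\theta}=0$.

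For assertion~(iii) I would locate each bracket in a sum of root spaces and check membership in $\g{n}$. Using $\theta\xi=-aH_\alpha+b\,\theta X_\alpha$ with $\theta X_\alpha\in\g{g}_{-\alpha}$, one finds $[Y_\lambda,\theta\xi]\in\g{g}_\lambda\oplus\g{g}_{\lambda-\alpha}$ and $[Y_\lambda,\theta X_\alpha]\in\g{g}_{\lambda-\alpha}$, while $[[Y_\lambda,\xi],\theta\xi]\in\g{g}_{\lambda-\alpha}\oplus\g{g}_\lambda\oplus\g{g}_{\lambda+\alpha}$ after using $[Y_\lambda,\xi]\in\g{g}_\lambda\oplus\g{g}_{\lambda+\alpha}$. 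Since $\lambda$ and $\lambda+\alpha$ lie in $\Sigma^+$ (when they are roots), the corresponding spaces are inside $\g{n}$, so the only point to settle is that $\g{g}_{\lambda-\alpha}\subseteq\g{n}$. Here the hypothesis $\lambda\neq\alpha$ gives $\lambda-\alpha\neq0$, and the simplicity of $\alpha$ rules out $\lambda-\alpha$ being a negative root: if $\lambda-\alpha=-\mu$ with $\mu\in\Sigma^+$, then $\alpha=\lambda+\mu$ would express the simple root $\alpha$ as a sum of two positive roots. Therefore $\lambda-\alpha$ is either a positive root or not a root at all, so $\g{g}_{\lambda-\alpha}\subseteq\g{n}$ (being $0$ in the latter case), and all three brackets lie in $\g{n}$.

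The computations themselves are routine; the only genuine difficulty, and what I expect to be the main obstacle, is the disciplined control of exactly which root spaces appear, and in particular the systematic use of the simplicity of $\alpha$ to discard the dangerous configurations ($\lambda+\mu=\alpha$ in~(i)--(ii) and $\lambda-\alpha\in-\Sigma^+$ in~(iii)). The orthogonality hypothesis $Y_\lambda\perp\xi$ enters only to annihilate the single diagonal term that survives when $\lambda=\alpha$, so it should be invoked exactly there.
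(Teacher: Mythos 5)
Your proof is correct, and for parts (i) and (ii) it takes a dual route to the paper's. The paper works on the left slot of the pairing: it locates $[\theta Y_{\lambda},\xi]$ inside $\g{g}_{-\lambda}\oplus\g{g}_{\alpha-\lambda}$ and kills each summand directly --- $\g{g}_{-\lambda}$ because negative root spaces are $\langle\cdot,\cdot\rangle_{B_\theta}$-orthogonal to $\g{a}\oplus\g{n}$, and $\g{g}_{\alpha-\lambda}$ because the simplicity of $\alpha$ forces it to vanish unless $\lambda=\alpha$, in which case Lemma~\ref{lemma:berndt:sanmartin}~(ii) places $[\theta Y_{\alpha},X_{\alpha}]$ in $\g{k}_0$; assertion~(ii) is then reduced to assertion~(i) by expanding $\theta[Y_{\lambda},\xi]$. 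You instead move $\theta$ across the pairing via~\eqref{eq:cartan:inner} and track the $\g{g}_{\alpha}$-component of $[Y_{\lambda},Z]$ (resp.\ of $[[Y_{\lambda},\xi],Z]$); the same simplicity argument discards every contribution except the diagonal one at $\lambda=\alpha$, which you annihilate with the explicit orthogonality $\langle X_{\alpha},Y_{\alpha}\rangle_{B_\theta}=0$ rather than with the $\g{k}_0$ lemma. The two arguments are equivalent in substance --- both pivot on the fact that a simple $\alpha$ cannot equal $\lambda+\mu$ with $\lambda,\mu\in\Sigma^{+}$ --- but yours is slightly more self-contained (no appeal to Lemma~\ref{lemma:berndt:sanmartin}), while the paper's makes the root-space location of the brackets explicit, which is information it reuses later. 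Part~(iii) is identical in both. One shared caveat: when $\lambda=\alpha$ and $b=0$, orthogonality of $Y_{\alpha}$ to $\xi=aH_{\alpha}$ gives no control on $\langle X_{\alpha},Y_{\alpha}\rangle_{B_\theta}$, so the claim on $[\theta Y_{\alpha},X_{\alpha}]$ implicitly needs $b\neq 0$; you flag this, the paper does not, and in every application of the lemma one has $\lambda\neq\alpha$, so nothing is lost.
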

\begin{proof}
First, using the properties of the root space decomposition, we have $[\theta Y_{\lambda}, \xi] = a [ \theta Y_{\lambda}, H_{\alpha}] + b[ \theta Y_{\lambda}, X_{\alpha}] \in \g{g}_{-\lambda} \oplus \g{g}_{\alpha-\lambda}$. Since $\lambda$ is a positive root by assumption, then $-\lambda$ is negative, and hence $\langle \g{g}_{-\lambda},\g{a}\oplus\g{n}\rangle_{B_\theta}=0$. Moreover, since $\alpha\in \Pi$ is a simple root and $\lambda\in\Sigma^+$, then $\alpha-\lambda\notin\Sigma^+$, and hence either $\lambda=\alpha$ or $\g{g}_{\alpha-\lambda}=0$. If $\lambda=\alpha$, then $[\theta Y_{\lambda}, X_{\alpha}] \in \g{k}_0$ by  Lemma~\ref{lemma:berndt:sanmartin}~(\ref{lemma:berndt:sanmartin:ii}). In any case, $ [\theta Y_{\lambda}, X_{\alpha}]$ is $\langle \cdot,\cdot\rangle_{B_\theta}$-orthogonal to $\g{a}\oplus\g{n}$. This proves claim~(\ref{lemma:projection:xi:y:i}).

Now, we have
\[
[\theta [Y_{\lambda}, \xi], \xi] = [\theta [Y_{\lambda}, aH_\alpha+b X_\alpha], \xi]= -a \langle \alpha, \lambda \rangle [\theta Y_{\lambda}, \xi] + b [\theta[Y_{\lambda}, X_{\alpha}], \xi].
\]
Note first that $ [\theta Y_{\lambda}, \xi]$ is $\langle \cdot,\cdot\rangle_{B_\theta}$-orthogonal to $\g{a} \oplus \g{n}$ by virtue of assertion~(\ref{lemma:projection:xi:y:i}). Moreover, by the properties of the root spaces, we have $[\theta[Y_{\lambda}, X_{\alpha}], \xi] \in \g{g}_{-\lambda-\alpha} \oplus \g{g}_{-\lambda}$, where $-\lambda-\alpha$, $-\lambda\notin\Sigma^+\cup\{0\}$. This implies assertion~(\ref{lemma:projection:xi:y:ii}).

Finally, we prove claim~(\ref{lemma:projection:xi:y:iii}). Again by the properties of the root space decomposition, we have $[Y_{\lambda}, \theta \xi] \in \g{g}_{\lambda} \oplus \g{g}_{\lambda - \alpha}$ and $[[Y_{\lambda}, \xi], \theta \xi] \in \g{g}_{\lambda} \oplus \g{g}_{\lambda-\alpha} \oplus \g{g}_{\lambda + \alpha}$. Since by assumption $\lambda$ is a positive root different from $\alpha$, we have  $\lambda + \alpha\in \Sigma^+$ or $\lambda + \alpha\notin \Sigma\cup\{0\}$, and $\lambda - \alpha\in\Sigma^+$ or $\lambda - \alpha\notin\Sigma\cup\{0\}$. In both cases, we get $[Y_{\lambda}, \theta \xi]$, $[[Y_{\lambda}, \xi], \theta \xi] \in \g{n}$, and taking $a=0$ and $b=1$ in the first bracket, we also have $[Y_{\lambda}, \theta X_\alpha]\in\g{n}$, from where the result follows.
\end{proof}

Our purpose hereafter will be to calculate the restriction of the endomorphism $D$ defined in~\eqref{eq:operator:d} to the subspace $\g{g}_\beta\oplus\g{g}_{\alpha+\beta}$ of $\g{s}$ (see~\eqref{eq:decomposition:tangent}). This will be done in several steps. In the following proposition we restrict our attention to the operator $R_{\xi} + \Ss_{\xi}^2$. Note that, since $\beta\in\Sigma^+\setminus\{\alpha\}$ and $\alpha+\beta\in\Sigma^+$ by definition of $\beta$, for both $\lambda=\beta$ and $\lambda=\alpha+\beta$, we have that $\lambda\neq\alpha$, and hence $\g{g}_\lambda\subset\g{s}$. Therefore, the following proposition implies that $R_{\xi} + \Ss_{\xi}^2\vert_{\g{g}_\beta\oplus\g{g}_{\alpha+\beta}}=0$.

\begin{proposition}\label{proposition:r:s:n}
Let $\xi = a H_{\alpha} + b X_{\alpha}$ be a unit vector, where $\alpha \in \Pi$, $X_{\alpha} \in \g{g}_{\alpha}$ is a unit vector, and $a$, $b \in \R$. Let $Y_{\lambda} \in \g{g}_{\lambda}\subset \g{s}$, where $\lambda\in\Sigma^{+}\setminus\{\alpha\}$. Then $(R_{\xi} + \Ss_{\xi}^2) Y_{\lambda} = 0$.
\end{proposition}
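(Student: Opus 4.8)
The plan is to work entirely at the Lie algebra level and to exploit two simplifications coming from Lemma~\ref{lemma:xi:calculations}. Since $\nabla_{H_\alpha}$ annihilates everything by part~(iii), one has $\nabla_\xi = b\,\nabla_{X_\alpha}$ as operators on $\g{a}\oplus\g{n}$, and $\nabla_\xi\xi = b^2 H_\alpha - ab|\alpha|^2 X_\alpha$ by part~(ii). Because $S$ is a Lie subgroup, $\Ss_\xi Z = -\nabla_Z\xi$ with $\nabla_Z\xi$ already tangent, so $\Ss_\xi^2 Z = \nabla_{\nabla_Z\xi}\xi$; combining this with $R_\xi Z = \nabla_Z\nabla_\xi\xi - \nabla_\xi\nabla_Z\xi - \nabla_{[Z,\xi]}\xi$ and the torsion-free identity $[Z,\xi]=\nabla_Z\xi-\nabla_\xi Z$, the two terms $\nabla_{\nabla_Z\xi}\xi$ cancel and one is left with the convenient identity
\[
R_\xi Z + \Ss_\xi^2 Z = \nabla_Z\nabla_\xi\xi - \nabla_\xi\nabla_Z\xi + \nabla_{\nabla_\xi Z}\xi,
\]
which I would apply with $Z=Y_\lambda$. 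This avoids iterating the shape operator and reduces the whole proposition to evaluating three covariant derivatives of root vectors.

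The engine for those evaluations is Lemma~\ref{lemma:levi:civita:connection:n} together with the vanishing statements of Lemma~\ref{lemma:projection:xi:y}. For any $\mu\in\Sigma^{+}\setminus\{\alpha\}$ and $Y_\mu\in\g{g}_\mu$, the $\g{a}$-components and negative-root components produced by the bracket formula are $\langle\cdot,\cdot\rangle_{B_\theta}$-orthogonal to $\g{a}\oplus\g{n}$, so the connection collapses to $\nabla_{Y_\mu}\xi=\tfrac12([Y_\mu,\xi]-[Y_\mu,\theta\xi])$ and $\nabla_{X_\alpha}Y_\mu=-\tfrac12([Y_\mu,X_\alpha]+[Y_\mu,\theta X_\alpha])$. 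Writing $W:=[Y_\lambda,X_\alpha]\in\g{g}_{\lambda+\alpha}$ and $V:=[Y_\lambda,\theta X_\alpha]\in\g{g}_{\lambda-\alpha}$, I would expand each of the three summands above and sort all contributions into the five possible root spaces $\g{g}_{\lambda\pm2\alpha}$, $\g{g}_{\lambda\pm\alpha}$, $\g{g}_\lambda$.

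A term-by-term comparison then shows the three summands cancel in every root space. The $\g{g}_{\lambda\pm2\alpha}$ parts (carrying $[W,X_\alpha]$ and $[V,\theta X_\alpha]$) cancel between $-\nabla_\xi\nabla_{Y_\lambda}\xi$ and $\nabla_{\nabla_\xi Y_\lambda}\xi$; the $\g{g}_{\lambda\pm\alpha}$ parts cancel after inserting $\langle\alpha,\lambda\pm\alpha\rangle=\langle\alpha,\lambda\rangle\pm|\alpha|^2$. The only non-formal input is the $\g{g}_\lambda$-component, which comes from $[W,\theta X_\alpha]$ and $[V,X_\alpha]$: the Jacobi identity together with Lemma~\ref{lemma:berndt:sanmartin}~(i) (which gives $[\theta X_\alpha,X_\alpha]=2H_\alpha$, since $X_\alpha$ has $\langle\cdot,\cdot\rangle_{B_\theta}$-norm $\sqrt2$ by~\eqref{eq:relation:inner}) yields $[W,\theta X_\alpha]-[V,X_\alpha]=2\langle\alpha,\lambda\rangle Y_\lambda$; feeding this relation in makes the $\g{g}_\lambda$-parts of the three terms cancel as well. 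Hence $(R_\xi+\Ss_\xi^2)Y_\lambda=0$.

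I expect the main obstacle to be bookkeeping rather than anything conceptual. One must treat with care the degenerate possibilities that $\lambda\pm\alpha$ or $\lambda\pm2\alpha$ fail to be roots (in which case the corresponding brackets simply vanish), and above all the non-reduced case $\lambda=2\alpha$: there $W\in\g{g}_{3\alpha}=0$, while $V\in\g{g}_\alpha$ need not be orthogonal to $\xi$, so the simplified formula $\nabla_{Y_\mu}\xi=\tfrac12([Y_\mu,\xi]-[Y_\mu,\theta\xi])$ (valid only for $\mu\neq\alpha$) cannot be used for $\nabla_V\xi$ and this term must be computed from the full connection formula. In each such situation one checks directly that the relevant projection onto $\g{n}$ lands in $\g{g}_0=\g{k}_0\oplus\g{a}$ and therefore vanishes, so that the cancellation pattern persists.
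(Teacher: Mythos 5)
Your argument is correct and its cancellations do check out: I verified the root-space ledger, including the key identity $[W,\theta X_\alpha]-[V,X_\alpha]=2\langle\alpha,\lambda\rangle Y_\lambda$ obtained from the Jacobi identity and $[\theta X_\alpha,X_\alpha]=2H_\alpha$. However, the organization differs from the paper's proof in a way worth noting. You first use torsion-freeness to cancel the two $\nabla_{\nabla_{Y_\lambda}\xi}\xi$ contributions and reduce everything to $\nabla_{Y_\lambda}\nabla_\xi\xi-\nabla_\xi\nabla_{Y_\lambda}\xi+\nabla_{\nabla_\xi Y_\lambda}\xi$, and then check the vanishing root space by root space in $\g{g}_{\lambda+j\alpha}$, $-2\le j\le 2$; this forces you to track which of $\lambda\pm\alpha$, $\lambda\pm2\alpha$ are roots and, above all, to treat the non-reduced configuration $\lambda=2\alpha$ separately. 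The paper instead keeps $\Ss_\xi^2Y_\lambda=\nabla_{\nabla_{Y_\lambda}\xi}\xi$ and $\nabla_{[Y_\lambda,\xi]}\xi$ as separate summands, rewrites each of the four terms of $R_\xi+\Ss_\xi^2$ using Lemma~\ref{lemma:levi:civita:connection:n}, Lemma~\ref{lemma:projection:xi:y}, torsion-freeness and the Jacobi identity, and obtains a purely symbolic cancellation: the bracket $[Y_\lambda,\theta\xi]$ --- the only object that could misbehave when $\lambda=2\alpha$ --- occurs only inside $\nabla_\xi[Y_\lambda,\theta\xi]$, which cancels between \eqref{eq:r:s:2} and \eqref{eq:r:s:4} without ever being evaluated, so no root-space decomposition and no case analysis are needed. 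Your flagged worry about $\lambda=2\alpha$ does resolve as you suspect, though the cleanest reason is that $\langle[Y_{2\alpha},\theta X_\alpha],X_\alpha\rangle_{B_\theta}=-B([Y_{2\alpha},\theta X_\alpha],\theta X_\alpha)=0$ by $\ad$-invariance of the Killing form, so $V$ is orthogonal to $X_\alpha$ even in this case and the residual $\g{g}_0$-valued brackets lie in $\g{k}_0$ by Lemma~\ref{lemma:berndt:sanmartin}~(\ref{lemma:berndt:sanmartin:ii}), hence drop out of the connection formula. What your route buys is an explicit, checkable account of where each cancellation happens; what the paper's buys is uniformity and brevity.
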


\begin{proof}
We will use several times that $\alpha-\lambda\notin\Sigma^+$, as follows from $\alpha\in \Pi$ and $\lambda\in\Sigma^{+}$.
First, using Lemma~\ref{lemma:xi:calculations}~(\ref{lemma:xi:calculations:i})-(\ref{lemma:xi:calculations:ii}), one has 
\[ 
\nabla_{\xi} \xi \in \g{a} \oplus \g{g}_\alpha , \qquad \nabla_{\xi} \xi - \theta \nabla_{\xi} \xi = [ \theta \xi , \xi ] . 
\] 
Hence, by 
Lemma~\ref{lemma:levi:civita:connection:n} 
(taking into account that $\langle Y_{\lambda}, \g{g}_{\alpha} \rangle = 0$ since $\lambda\neq\alpha$),  we have 
\begin{align}\label{eq:r:s:1}
\begin{split} 
\nabla_{Y_{\lambda}} \nabla_{\xi} \xi & = 
\frac{1}{2} ([Y_{\lambda}, \nabla_{\xi} \xi] + [\theta Y_{\lambda} , \nabla_{\xi} \xi] - [Y_{\lambda}, \theta \nabla_{\xi} \xi])_{\g{n}} 
= \frac{1}{2} [Y_{\lambda}, [\theta \xi, \xi]] . 
\end{split} 
\end{align} 
Now, using again Lemma~\ref{lemma:levi:civita:connection:n}, Lemma~\ref{lemma:projection:xi:y}~(\ref{lemma:projection:xi:y:i})-(\ref{lemma:projection:xi:y:iii}), and the symmetry of the Levi-Civita connection, we get
\begin{equation}\label{eq:r:s:2}
\begin{aligned}
 \nabla_{\xi} \nabla_{Y_{\lambda}} \xi & =  \frac{1}{2} \nabla_\xi ([Y_{\lambda}, \xi] + [\theta Y_{\lambda}, \xi] - [Y_{\lambda}, \theta \xi])_\g{n} = \frac{1}{2} \nabla_\xi [Y_{\lambda}, \xi] - \frac{1}{2} \nabla_\xi [Y_{\lambda}, \theta \xi]  \\
 & = \frac{1}{2} [\xi, [Y_{\lambda}, \xi]] + \frac{1}{2} \nabla_{[Y_{\lambda}, \xi]} \xi - \frac{1}{2} \nabla_\xi [Y_{\lambda}, \theta \xi].
\end{aligned}
\end{equation}
As $\lambda\in\Sigma^+\setminus\{\alpha\}$, we have that $[Y_{\lambda}, \xi]\in\g{g}_\lambda\oplus\g{g}_{\alpha+\lambda}$ and $\xi\in\g{a}\oplus\g{g}_\alpha$ are under the hypotheses of Lemma~\ref{lemma:levi:civita:connection:n}. Hence, using Lemma~\ref{lemma:levi:civita:connection:n} together with Lemma~\ref{lemma:projection:xi:y}~(\ref{lemma:projection:xi:y:ii})-(\ref{lemma:projection:xi:y:iii}), we have
\begin{align}\label{eq:r:s:3}
\nabla_{[Y_{\lambda}, \xi]} \xi & = \frac{1}{2} ([[Y_{\lambda}, \xi], \xi] + [\theta [Y_{\lambda}, \xi], \xi] - [[Y_{\lambda}, \xi],\theta \xi])_{\g{n}} = \frac{1}{2} [[Y_{\lambda}, \xi], \xi] - \frac{1}{2}[[Y_{\lambda}, \xi],\theta \xi].
\end{align}
Again, using Lemma~\ref{lemma:levi:civita:connection:n}, Lemma~\ref{lemma:projection:xi:y}~(\ref{lemma:projection:xi:y:i})-(\ref{lemma:projection:xi:y:iii}), the symmetry of the Levi-Civita connection, and the Jacobi identity, we obtain
\begin{equation}\label{eq:r:s:4}
\begin{aligned}
\Ss_{\xi}^2 Y_{\lambda} & = \nabla_{\nabla_{Y_{\lambda}} \xi} \xi = \frac{1}{2} \nabla_{([Y_{\lambda}, \xi]+[\theta Y_{\lambda}, \xi]-[Y_{\lambda}, \theta\xi])_{\g{n}}} \xi = \frac{1}{2} \nabla_{[Y_{\lambda}, \xi]} \xi - \frac{1}{2} \nabla_{[Y_{\lambda}, \theta \xi]} \xi\\
& = \frac{1}{2} \nabla_{[Y_{\lambda}, \xi]} \xi + \frac{1}{2}[\xi,[Y_{\lambda}, \theta \xi]] -\frac{1}{2} \nabla_\xi [Y_{\lambda}, \theta \xi]\\
& = \frac{1}{2} \nabla_{[Y_{\lambda}, \xi]} \xi - \frac{1}{2}[Y_{\lambda},[\theta \xi, \xi]]  - \frac{1}{2}[\theta \xi,[\xi, Y_{\lambda}]]  -\frac{1}{2} \nabla_\xi [Y_{\lambda}, \theta \xi].
\end{aligned}
\end{equation}
Now, using~\eqref{eq:r:s:1}, \eqref{eq:r:s:2}, \eqref{eq:r:s:3} and \eqref{eq:r:s:4} we get
\begin{align*}
(R_{\xi} + \Ss_{\xi}^2) Y_{\lambda} & =  \nabla_{Y_{\lambda}} \nabla_{\xi} \xi - \nabla_{\xi} \nabla_{Y_{\lambda}} \xi - \nabla_{[Y_{\lambda}, \xi]} \xi + \Ss_{\xi}^2 Y_{\lambda} \\
& =   \frac{1}{2}[Y_{\lambda}, [\theta \xi, \xi]] - \frac{1}{2} [\xi, [Y_{\lambda}, \xi]] - \frac{1}{2} \nabla_{[Y_{\lambda}, \xi]} \xi 
+ \frac{1}{2} \nabla_\xi [Y_{\lambda}, \theta \xi] -\frac{1}{2} [[Y_{\lambda}, \xi], \xi]\\
& \phantom{=} + \frac{1}{2}[[Y_{\lambda}, \xi],\theta \xi]+ \frac{1}{2} \nabla_{[Y_{\lambda}, \xi]} \xi - \frac{1}{2}[Y_{\lambda},[\theta \xi, \xi]] 
- \frac{1}{2}[\theta \xi,[\xi, Y_{\lambda}]]  -\frac{1}{2} \nabla_\xi [Y_{\lambda}, \theta \xi] \\
& =  0.   \qedhere
\end{align*}
\end{proof}

In view of Proposition~\ref{proposition:r:s:n}, in order to determine the restriction $D\vert_{\g{g}_{\beta} \oplus \g{g}_{\alpha + \beta}}$ of the endomorphism $D$ defined in~\eqref{eq:operator:d} to the subspace $\g{g}_{\beta} \oplus \g{g}_{\alpha + \beta}$, we just have to calculate the restriction of the shape operator $\Ss_\xi$ to $\g{g}_{\beta} \oplus \g{g}_{\alpha + \beta}$. 
Before doing that, we introduce a result which will make easier the calculations concerning the shape operator.

\begin{lemma}\label{lemma:unitary:brackets}
Let $Y_{\lambda} \in \g{g}_{\lambda}$ and $X_{\alpha} \in \g{g}_{\alpha}$ be unit vectors, where $\lambda \in \Sigma^{+}$ and $\alpha \in \Pi$. Assume $\lambda - \alpha\notin\Sigma$ and $\lambda + \alpha\in\Sigma$. Then
\[
[Y_{\lambda + \alpha}, \theta X_{\alpha}]  = - |\alpha| \sqrt{- A_{\alpha, \lambda} } Y_{\lambda}, 
\]
where $Y_{\lambda + \alpha} :=[Y_{\lambda},  X_{\alpha}]/(|\alpha| \sqrt{- A_{\alpha, \lambda} })$
 is a unit vector in $\g{g}_{\lambda + \alpha}$.
\end{lemma}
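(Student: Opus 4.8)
The plan is to deduce both claims---that $Y_{\lambda+\alpha}$ is a unit vector of $\g{g}_{\lambda+\alpha}$ and the bracket formula---from two applications of the Jacobi identity, each of which collapses to a single term thanks to Lemma~\ref{lemma:berndt:sanmartin}~(\ref{lemma:berndt:sanmartin:i}) and the vanishing of certain root spaces. Before computing, I would fix two preliminary facts. First, since $X_\alpha\in\g{g}_\alpha$ and $Y_\lambda\in\g{g}_\lambda$ are $\langle\cdot,\cdot\rangle$-unit vectors lying in $\g{n}$, relation~\eqref{eq:relation:inner} yields $\langle X_\alpha,X_\alpha\rangle_{B_\theta}=\langle Y_\lambda,Y_\lambda\rangle_{B_\theta}=2$. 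Second, by the standard theory of ($\alpha$-)root strings, the hypotheses $\lambda-\alpha\notin\Sigma$ and $\lambda+\alpha\in\Sigma$ force the $\alpha$-string through $\lambda$ to begin at $\lambda$ and to have length at least two, so that $A_{\alpha,\lambda}=-q\le -1<0$; in particular $\lambda\ne\alpha$, whence $\g{g}_{\lambda-\alpha}=\g{g}_{\alpha-\lambda}=0$ and $\sqrt{-A_{\alpha,\lambda}}$ is a well-defined positive real number. This last point is the only genuinely conceptual step: it is what makes the statement well-posed and simultaneously supplies the positivity needed for the normalization.

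For the norm, I would move one factor across the bracket using the invariance property~\eqref{eq:cartan:inner}, writing $\langle[Y_\lambda,X_\alpha],[Y_\lambda,X_\alpha]\rangle_{B_\theta}=-\langle X_\alpha,[\theta Y_\lambda,[Y_\lambda,X_\alpha]]\rangle_{B_\theta}$, and then expand the inner double bracket by the Jacobi identity. One summand involves $[\theta Y_\lambda,Y_\lambda]=2H_\lambda$ by Lemma~\ref{lemma:berndt:sanmartin}~(\ref{lemma:berndt:sanmartin:i}), producing $2[H_\lambda,X_\alpha]=2\langle\alpha,\lambda\rangle X_\alpha$; the other summand contains $[\theta Y_\lambda,X_\alpha]\in\g{g}_{\alpha-\lambda}=0$ and thus drops out. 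This gives $\langle[Y_\lambda,X_\alpha],[Y_\lambda,X_\alpha]\rangle_{B_\theta}=-4\langle\alpha,\lambda\rangle=-2|\alpha|^2A_{\alpha,\lambda}$. Dividing by $(|\alpha|\sqrt{-A_{\alpha,\lambda}})^2=|\alpha|^2(-A_{\alpha,\lambda})$ shows $\langle Y_{\lambda+\alpha},Y_{\lambda+\alpha}\rangle_{B_\theta}=2$, i.e.\ $Y_{\lambda+\alpha}$ is a $\langle\cdot,\cdot\rangle$-unit vector; that it lies in $\g{g}_{\lambda+\alpha}$ is immediate from $[\g{g}_\lambda,\g{g}_\alpha]\subset\g{g}_{\lambda+\alpha}$.

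For the bracket identity I would proceed in complete parallel. Expanding $[[Y_\lambda,X_\alpha],\theta X_\alpha]$ by the Jacobi identity gives $[Y_\lambda,[X_\alpha,\theta X_\alpha]]-[X_\alpha,[Y_\lambda,\theta X_\alpha]]$. Here $[X_\alpha,\theta X_\alpha]=-2H_\alpha$ by Lemma~\ref{lemma:berndt:sanmartin}~(\ref{lemma:berndt:sanmartin:i}), so the first term equals $2[H_\alpha,Y_\lambda]=2\langle\alpha,\lambda\rangle Y_\lambda$, while the second vanishes because $[Y_\lambda,\theta X_\alpha]\in\g{g}_{\lambda-\alpha}=0$. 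Hence $[[Y_\lambda,X_\alpha],\theta X_\alpha]=|\alpha|^2A_{\alpha,\lambda}Y_\lambda$, and dividing by $|\alpha|\sqrt{-A_{\alpha,\lambda}}$ and using $A_{\alpha,\lambda}/\sqrt{-A_{\alpha,\lambda}}=-\sqrt{-A_{\alpha,\lambda}}$ (valid since $A_{\alpha,\lambda}<0$) yields $[Y_{\lambda+\alpha},\theta X_\alpha]=-|\alpha|\sqrt{-A_{\alpha,\lambda}}\,Y_\lambda$, as claimed. Beyond the conceptual point about the sign of $A_{\alpha,\lambda}$, the only things to watch are the factor $2$ relating $\langle\cdot,\cdot\rangle$ and $\langle\cdot,\cdot\rangle_{B_\theta}$ on $\g{n}$ and the final square-root simplification; everything else is mechanical.
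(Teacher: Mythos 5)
Your proposal is correct and follows essentially the same route as the paper: both claims are obtained from the Jacobi identity combined with Lemma~\ref{lemma:berndt:sanmartin}~(\ref{lemma:berndt:sanmartin:i}), the vanishing of $\g{g}_{\lambda-\alpha}$, the root-string argument giving $A_{\alpha,\lambda}<0$, and the factor of $2$ from~\eqref{eq:relation:inner}. The only (immaterial) difference is that in the norm computation you move $\theta Y_\lambda$ across the $B_\theta$-inner product where the paper moves $\theta X_\alpha$.
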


\begin{proof}
First, using \eqref{eq:relation:inner} taking into account $[Y_{\lambda}, X_{\alpha}]$, $Y_{\lambda} \in \g{n}$, \eqref{eq:cartan:inner}, the Jacobi identity, the assumption $\lambda - \alpha\notin\Sigma$, Lemma~\ref{lemma:berndt:sanmartin}~(\ref{lemma:berndt:sanmartin:i}), and the definition of the Cartan integers $A_{\alpha,\lambda}$, we obtain
\begin{align*}
\langle [Y_{\lambda}, X_{\alpha}], [Y_{\lambda}, X_{\alpha}] \rangle & = \frac{1}{2} \langle [Y_{\lambda}, X_{\alpha}], [Y_{\lambda}, X_{\alpha}]\rangle_{B_\theta}  = \frac{1}{2} \langle Y_{\lambda}, [\theta X_{\alpha}, [Y_{\lambda}, X_{\alpha}]] \rangle_{B_\theta} \\
& = -\frac{1}{2} \langle Y_{\lambda}, [Y_{\lambda}, [ X_{\alpha}, \theta X_{\alpha}]] + [X_{\alpha}, [\theta X_{\alpha}, Y_{\lambda}]] \rangle_{B_\theta}\\
&  = - \langle \alpha, \lambda \rangle  \langle Y_{\lambda}, Y_{\lambda} \rangle_{B_\theta} = -|\alpha|^2 A_{\alpha, \lambda} \langle Y_{\lambda}, Y_{\lambda} \rangle = -|\alpha|^2 A_{\alpha, \lambda}.
\end{align*}
By the standard theory of abstract root systems (see for example \cite[Proposition~2.48~(g)]{K}) we have $A_{\alpha,\lambda}< 0$, as $\lambda - \alpha\notin\Sigma$ and $\lambda + \alpha\in\Sigma$. Hence $Y_{\lambda + \alpha} =[Y_{\lambda},  X_{\alpha}]/(|\alpha| \sqrt{- A_{\alpha, \lambda} })\in\g{g}_{\lambda+\alpha}$ is a unit vector.  Now, using the Jacobi identity, the fact that $\lambda - \alpha\notin \Sigma$, and Lemma~\ref{lemma:berndt:sanmartin}~(\ref{lemma:berndt:sanmartin:i}), we get 
\[
[[Y_{\lambda}, X_{\alpha}], \theta X_{\alpha}] = - [[\theta X_{\alpha}, Y_{\lambda}], X_{\alpha}] - [[X_{\alpha}, \theta X_{\alpha}], Y_{\lambda}] = 2 \langle \alpha, \lambda \rangle  Y_{\lambda} = |\alpha|^2 A_{\alpha, \lambda} Y_\lambda.
\]
Thus, recalling the definition of the unit vector $Y_{\lambda + \alpha}$, we have $[Y_{\lambda + \alpha}, \theta X_{\alpha}] = -|\alpha| \sqrt{-A_{\alpha, \lambda}} Y_{\lambda}$. This finishes the proof.
\end{proof}

From now on, for each $\lambda \in \Sigma^{+}$ such that $\lambda+\alpha\in\Sigma$ and $\lambda - \alpha\notin\Sigma$ (e.g.\ for $\lambda=\beta$), and for a unit vector $Y_{\lambda} \in \g{g}_{\lambda}$, we define
\begin{equation}\label{eq:definition:unit}
Y_{\lambda + \alpha} := \frac{[Y_{\lambda}, X_{\alpha}]}{|\alpha| \sqrt{- A_{\alpha, \lambda} }}\in\g{g}_{\lambda+\alpha},
\end{equation}
which is a unit vector in $\g{g}_{\lambda + \alpha}$ by means of Lemma~\ref{lemma:unitary:brackets}. Note that $Y_{\lambda + \alpha}$ depends on the choice of $Y_\lambda$, but we remove this dependence from the notation for simplicity. We can now proceed with the calculation of the shape operator.

\begin{proposition}\label{proposition:shape:operator:n}
Let $Y_{\lambda} \in \g{g}_{\lambda}$ be a unit vector, for some $\lambda \in \Sigma^{+}\setminus\{\alpha\}$. Then, 
\begin{equation}\label{eq:general:shape:operator}
\Ss_{\xi} Y_{\lambda} = \frac{a}{2} |\alpha|^2 A_{\alpha, \lambda} Y_{\lambda} - \frac{b}{2} [Y_{\lambda}, X_{\alpha}] + \frac{b}{2}[Y_{\lambda}, \theta X_{\alpha}]. 
\end{equation}
In particular, if $\lambda \in \Sigma^{+}\setminus\{\alpha\}$ is such that $\lambda + \alpha\in\Sigma$  but $\lambda - \alpha\notin\Sigma$, then:
\begin{align}
	\Ss_{\xi} Y_{\lambda} &= \frac{a}{2} |\alpha|^2 A_{\alpha, \lambda} Y_{\lambda} - \frac{b}{2} |\alpha|\sqrt{-A_{\alpha, \lambda}}  Y_{\lambda + \alpha}.  \label{proposition:shape:operator:n:1}
	\\
	\Ss_{\xi} Y_{\lambda+\alpha} &= \frac{a }{2} |\alpha|^2 A_{\alpha, \lambda + \alpha} Y_{\lambda + \alpha} + Y_{\lambda + 2 \alpha}  -\frac{b}{2}|\alpha|\sqrt{-A_{\alpha, \lambda}}  Y_{\lambda},\label{proposition:shape:operator:n:2}
\end{align}
for some $Y_{\lambda + 2 \alpha}\in\g{g}_{\lambda + 2 \alpha}$. (If $\lambda + 2\alpha\notin\Sigma$ then $Y_{\lambda + 2 \alpha} = 0$.)  
\end{proposition}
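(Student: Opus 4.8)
The plan is to derive the general formula \eqref{eq:general:shape:operator} by a direct computation of $\Ss_{\xi}Y_{\lambda}=-\nabla_{Y_{\lambda}}\xi$, and then to read off the two refined identities \eqref{proposition:shape:operator:n:1} and \eqref{proposition:shape:operator:n:2} as special cases. First I would invoke Lemma~\ref{lemma:levi:civita:connection:n} to compute $\nabla_{Y_{\lambda}}\xi$. Its hypothesis is satisfied: the only nonzero root-space component of $\xi=aH_{\alpha}+bX_{\alpha}$ lies in $\g{g}_{\alpha}$, while $Y_{\lambda}\in\g{g}_{\lambda}$ with $\lambda\neq\alpha$, so the components along each root are orthogonal. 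Thus $\nabla_{Y_{\lambda}}\xi=\tfrac12([Y_{\lambda},\xi]+[\theta Y_{\lambda},\xi]-[Y_{\lambda},\theta\xi])_{\g{n}}$.

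Next I would evaluate the three brackets using $[H_{\alpha},Y_{\lambda}]=\langle\alpha,\lambda\rangle Y_{\lambda}=\tfrac12|\alpha|^2A_{\alpha,\lambda}Y_{\lambda}$ together with $\theta H_{\alpha}=-H_{\alpha}$. This gives $[Y_{\lambda},\xi]=-\tfrac{a}{2}|\alpha|^2A_{\alpha,\lambda}Y_{\lambda}+b[Y_{\lambda},X_{\alpha}]$ and $[Y_{\lambda},\theta\xi]=\tfrac{a}{2}|\alpha|^2A_{\alpha,\lambda}Y_{\lambda}+b[Y_{\lambda},\theta X_{\alpha}]$. The middle bracket disappears after projection, since Lemma~\ref{lemma:projection:xi:y}~(\ref{lemma:projection:xi:y:i}) shows $[\theta Y_{\lambda},\xi]$ is $\langle\cdot,\cdot\rangle_{B_{\theta}}$-orthogonal to $\g{a}\oplus\g{n}$, whence $([\theta Y_{\lambda},\xi])_{\g{n}}=0$. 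Moreover, Lemma~\ref{lemma:projection:xi:y}~(\ref{lemma:projection:xi:y:iii}) ensures $[Y_{\lambda},\theta X_{\alpha}]\in\g{n}$, while $[Y_{\lambda},X_{\alpha}]\in\g{g}_{\lambda+\alpha}\subset\g{n}$, so the two surviving brackets are unaffected by the projection onto $\g{n}$. Assembling the pieces and negating produces exactly \eqref{eq:general:shape:operator}.

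To obtain \eqref{proposition:shape:operator:n:1}, I would then impose $\lambda+\alpha\in\Sigma$ and $\lambda-\alpha\notin\Sigma$. In this case $[Y_{\lambda},\theta X_{\alpha}]\in\g{g}_{\lambda-\alpha}$ vanishes, because $\lambda-\alpha$ is neither a root nor zero, and $[Y_{\lambda},X_{\alpha}]=|\alpha|\sqrt{-A_{\alpha,\lambda}}\,Y_{\lambda+\alpha}$ by the definition \eqref{eq:definition:unit}; substituting into \eqref{eq:general:shape:operator} yields the stated expression. For \eqref{proposition:shape:operator:n:2} I would apply \eqref{eq:general:shape:operator} with $\lambda$ replaced by $\lambda+\alpha$, which is legitimate since $\lambda+\alpha\in\Sigma^{+}\setminus\{\alpha\}$. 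Here the term $-\tfrac{b}{2}[Y_{\lambda+\alpha},X_{\alpha}]$ lies in $\g{g}_{\lambda+2\alpha}$ and is absorbed into the vector $Y_{\lambda+2\alpha}$ (which equals $0$ when $\lambda+2\alpha\notin\Sigma$), whereas Lemma~\ref{lemma:unitary:brackets}, whose hypotheses $\lambda-\alpha\notin\Sigma$ and $\lambda+\alpha\in\Sigma$ are exactly the standing assumptions, evaluates $[Y_{\lambda+\alpha},\theta X_{\alpha}]=-|\alpha|\sqrt{-A_{\alpha,\lambda}}\,Y_{\lambda}$, supplying the final term.

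The argument is essentially careful bookkeeping rather than a conceptual difficulty, so I do not anticipate a genuine obstacle. The points demanding attention are verifying the orthogonality hypothesis of Lemma~\ref{lemma:levi:civita:connection:n}, correctly singling out the bracket that projects to zero via Lemma~\ref{lemma:projection:xi:y}, and tracking the root-space grading so that the $\g{g}_{\lambda-\alpha}$ contribution is recognized as vanishing in the special case while the $\g{g}_{\lambda+2\alpha}$ contribution is packaged as $Y_{\lambda+2\alpha}$.
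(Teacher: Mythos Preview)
Your proposal is correct and follows essentially the same approach as the paper: apply Lemma~\ref{lemma:levi:civita:connection:n} to compute $-\nabla_{Y_\lambda}\xi$, use Lemma~\ref{lemma:projection:xi:y}~(\ref{lemma:projection:xi:y:i}) to discard the $[\theta Y_\lambda,\xi]$ term and Lemma~\ref{lemma:projection:xi:y}~(\ref{lemma:projection:xi:y:iii}) to remove the projection on the remaining brackets, and then specialize via \eqref{eq:definition:unit} and Lemma~\ref{lemma:unitary:brackets}. The paper's proof is the same computation, only slightly more compressed.
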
 
\begin{proof}
First, using Lemma~\ref{lemma:levi:civita:connection:n} and Lemma~\ref{lemma:projection:xi:y}~(\ref{lemma:projection:xi:y:i})-(\ref{lemma:projection:xi:y:iii}), we get
\begin{align*}
\Ss_{\xi} Y_{\lambda} & = -\nabla_{Y_{\lambda}} \xi = -\frac{1}{2}([Y_{\lambda}, \xi] + [\theta Y_{\lambda}, \xi] - [Y_{\lambda}, \theta \xi])_\g{n} = -\frac{1}{2}[Y_{\lambda}, \xi]+\frac{1}{2}[Y_{\lambda}, \theta \xi]\\
& = -\frac{1}{2}[Y_{\lambda}, a H_{\alpha} + bX_{\alpha}]+\frac{1}{2}[Y_{\lambda}, -a H_{\alpha} + b\theta X_{\alpha}]
 =  a \langle \alpha, \lambda \rangle Y_{\lambda} - \frac{b}{2} [Y_{\lambda}, X_{\alpha}] + \frac{b}{2}[Y_{\lambda}, \theta X_{\alpha}]\\
& = \frac{a}{2} |\alpha|^2 A_{\alpha, \lambda} Y_{\lambda} - \frac{b}{2} [Y_{\lambda}, X_{\alpha}] + \frac{b}{2}[Y_{\lambda}, \theta X_{\alpha}].
\end{align*}
This proves~\eqref{eq:general:shape:operator}. Note that \eqref{proposition:shape:operator:n:1} follows directly from~\eqref{eq:general:shape:operator}, taking into account \eqref{eq:definition:unit} and $\lambda - \alpha\notin\Sigma$. Finally, using~\eqref{eq:general:shape:operator} and Lemma~\ref{lemma:unitary:brackets} we get
\begin{align*}
\Ss_{\xi} Y_{\lambda + \alpha} & =\frac{a}{2}|\alpha|^2 A_{\alpha, \lambda + \alpha} Y_{\lambda + \alpha} - \frac{b}{2} [Y_{\lambda+ \alpha}, X_{\alpha}] + \frac{b}{2}[Y_{\lambda + \alpha}, \theta X_{\alpha}]\\
& = \frac{a}{2} |\alpha|^2 A_{\alpha, \lambda + \alpha} Y_{\lambda + \alpha} + Y_{\lambda + 2 \alpha} - \frac{b}{2} |\alpha|\sqrt{-A_{\alpha, \lambda}} Y_{\lambda},
\end{align*}
where we put  $Y_{\lambda + 2 \alpha} := - b[Y_{\lambda+ \alpha}, X_{\alpha}]/2\in \g{g}_{\lambda + 2 \alpha}$. This proves~\eqref{proposition:shape:operator:n:2}.
\end{proof}
Therefore, taking into account the definition of the endomorphism $D$ in~\eqref{eq:operator:d}, Proposition~\ref{proposition:r:s:n} and Proposition~\ref{proposition:shape:operator:n}, we can state the following:

\begin{corollary}\label{corollary:d:n}
Let $\xi = a H_{\alpha} + b X_{\alpha}$ be a unit vector, where $X_{\alpha} \in \g{g}_{\alpha}$ is a unit vector, $\alpha \in \Pi$, and $a$, $b \in \R$. Let $\lambda \in \Sigma^{+}\backslash \{ \alpha \}$ be such that $\lambda + \alpha\in\Sigma$ and $\lambda - \alpha\notin\Sigma$. Let $Y_{\lambda}\in \g{g}_{\lambda}\subset\g{s}$ be a unit vector. Then there exists $Y_{\lambda + 2 \alpha}\in\g{g}_{\lambda + 2 \alpha}$ such that:
\begin{align*}
D Y_{\lambda} &= \tr (\Ss_{\xi}) \biggl(\frac{a}{2}|\alpha|^2 A_{\alpha, \lambda} Y_{\lambda} - \frac{b}{2}|\alpha| \sqrt{-A_{\alpha, \lambda}} Y_{\lambda + \alpha}\biggr) + c Y_{\lambda}, 
\\
D Y_{\lambda + \alpha}  &= \tr (\Ss_{\xi}) \biggl(\frac{a}{2} |\alpha|^2 A_{\alpha, \lambda+ \alpha} Y_{\lambda + \alpha} + Y_{\lambda + 2 \alpha} - \frac{b}{2} |\alpha|\sqrt{-A_{\alpha, \lambda}} Y_{\lambda}\biggr) + c Y_{\lambda+\alpha}. 
\end{align*}
\end{corollary}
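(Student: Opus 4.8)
The plan is to derive this corollary by direct substitution, since all of the substantive work has already been done in the two preceding propositions. Recalling the definition of $D$ from~\eqref{eq:operator:d}, namely $D = \tr(\Ss_\xi)\Ss_\xi - (R_\xi + \Ss_\xi^2) + c\,\id$, I would evaluate $D$ separately on the two vectors $Y_\lambda$ and $Y_{\lambda+\alpha}$, exploiting the fact that both of them lie in root spaces $\g{g}_\mu$ with $\mu \in \Sigma^{+}\setminus\{\alpha\}$, which is exactly the hypothesis under which the term $R_\xi + \Ss_\xi^2$ has been shown to vanish.

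For $Y_\lambda$, the hypotheses of Proposition~\ref{proposition:r:s:n} hold immediately because $\lambda \in \Sigma^{+}\setminus\{\alpha\}$, so that $(R_\xi + \Ss_\xi^2)Y_\lambda = 0$; the remaining term $\tr(\Ss_\xi)\Ss_\xi Y_\lambda$ is then read off from~\eqref{proposition:shape:operator:n:1} in Proposition~\ref{proposition:shape:operator:n}, and adding $c\,Y_\lambda$ produces the first displayed identity. For $Y_{\lambda+\alpha}$, the single point that I would verify explicitly is that $\lambda+\alpha$ again belongs to $\Sigma^{+}\setminus\{\alpha\}$: this holds since $\lambda+\alpha \in \Sigma$ by assumption, it is positive as the sum of the positive root $\lambda$ and the simple (hence positive) root $\alpha$, and it is distinct from $\alpha$ because $\lambda \neq 0$. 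Consequently Proposition~\ref{proposition:r:s:n} applies once more to give $(R_\xi + \Ss_\xi^2)Y_{\lambda+\alpha} = 0$, while the shape-operator contribution is furnished by~\eqref{proposition:shape:operator:n:2}, with the same vector $Y_{\lambda+2\alpha}\in\g{g}_{\lambda+2\alpha}$ that is produced there; collecting terms and adding $c\,Y_{\lambda+\alpha}$ yields the second displayed identity.

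Since the statement is only an assembly of already-established formulas, I do not expect any genuine obstacle here; the entire content is the algebraic bookkeeping of substituting~\eqref{proposition:shape:operator:n:1} and~\eqref{proposition:shape:operator:n:2} into the definition of $D$. The only step requiring a moment's attention is the verification that $\lambda+\alpha \in \Sigma^{+}\setminus\{\alpha\}$, which is what licenses the use of Proposition~\ref{proposition:r:s:n} for the second vector as well.
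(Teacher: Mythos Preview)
Your proposal is correct and follows exactly the paper's own approach: the corollary is presented there without a separate proof, simply as the combination of the definition of $D$ in~\eqref{eq:operator:d}, Proposition~\ref{proposition:r:s:n}, and Proposition~\ref{proposition:shape:operator:n}. Your explicit check that $\lambda+\alpha\in\Sigma^{+}\setminus\{\alpha\}$ is a welcome clarification that the paper leaves implicit.
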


Corollary~\ref{corollary:d:n} with $\lambda=\beta$ gives us the expression of $D\vert_{\g{g}_\beta\oplus\g{g}_{\beta+\alpha}}$. Thus, so far, we have calculated $D$ restricted to the subspaces $\g{a} \ominus \R H_{\alpha}$ and $\g{g}_{\beta} \oplus \g{g}_{\beta + \alpha}$ of $\g{s}$. Hence, calculating $DU$ is the last step in order to determine the endomorphism $D$ restricted to the subspace~\eqref{eq:decomposition:tangent} of $\g{s}$. Recall that $U = b |\alpha|^{-1} H_{\alpha} -a |\alpha| X_{\alpha}$ is a unit vector in $\g{s}$. Let us first introduce the following auxiliary result.

\begin{lemma}\label{lemma:u}
We have:
	\begin{tasks}[label=(\roman*),label-width=4ex,label-format={\rm}](3)
		\task $\nabla_{X_{\alpha}} \xi = |\alpha| U,$
		\label{lemma:u:1}
		\task $\nabla_U \xi = -a |\alpha|^2 U$,\label{lemma:u:2}
		\task $\nabla_U U = a |\alpha|^2 \xi$,\label{lemma:u:4}
		\task $\nabla_\xi U = -b |\alpha| \xi$,\label{lemma:u:5}
		\task $[U, \xi] = |\alpha| X_{\alpha}$,\label{lemma:u:6}
		\task $\nabla_\xi \xi = b |\alpha| U$. \label{lemma:u:7}
	\end{tasks}
\setcounter{enumi}{0}
\end{lemma}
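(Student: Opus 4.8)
The plan is to reduce all six identities to the two basic covariant derivatives $\nabla_{X_\alpha}H_\alpha$ and $\nabla_{X_\alpha}X_\alpha$, from which everything else follows by linearity once we also invoke Lemma~\ref{lemma:xi:calculations}~(\ref{lemma:xi:calculations:iii}), so that $\nabla_H Y=0$ whenever $H\in\g{a}$. To obtain these two, I would feed $X=Y=X_\alpha$ (respectively $X=X_\alpha$, $Y=H_\alpha$) into the Levi-Civita formula~\eqref{eq:inner:an:bphi}. The key simplifications are: $[X_\alpha,\theta X_\alpha]=-[\theta X_\alpha,X_\alpha]=-2H_\alpha$ by Lemma~\ref{lemma:berndt:sanmartin}~(\ref{lemma:berndt:sanmartin:i}) (using $\langle X_\alpha,X_\alpha\rangle=1$); that $\theta X_\alpha\in\g{g}_{-\alpha}$ is $\langle\cdot,\cdot\rangle_{B_\theta}$-orthogonal to the whole of $\g{a}\oplus\g{n}$, since distinct restricted root spaces are $B_\theta$-orthogonal and $-\alpha\notin\Sigma^+\cup\{0\}$; and the bracket relations $[H_\alpha,X_\alpha]=|\alpha|^2X_\alpha$, $\theta H_\alpha=-H_\alpha$. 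Pairing against an arbitrary $Z\in\g{a}\oplus\g{n}$ and translating between $\langle\cdot,\cdot\rangle_{B_\theta}$ and $\langle\cdot,\cdot\rangle$ via~\eqref{eq:relation:inner}, I expect to find $\nabla_{X_\alpha}H_\alpha=-|\alpha|^2X_\alpha$ and $\nabla_{X_\alpha}X_\alpha=H_\alpha$.

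Granting these, the remaining steps are purely formal. Writing $\xi=aH_\alpha+bX_\alpha$ and $U=b|\alpha|^{-1}H_\alpha-a|\alpha|X_\alpha$ and discarding $\nabla_{H_\alpha}$-terms by Lemma~\ref{lemma:xi:calculations}~(\ref{lemma:xi:calculations:iii}), linearity yields $\nabla_{X_\alpha}\xi=bH_\alpha-a|\alpha|^2X_\alpha=|\alpha|U$, which is (i), and likewise $\nabla_{X_\alpha}U=-b|\alpha|X_\alpha-a|\alpha|H_\alpha=-|\alpha|\xi$. Then (ii) follows from $\nabla_U\xi=-a|\alpha|\nabla_{X_\alpha}\xi$, (iii) from $\nabla_UU=-a|\alpha|\nabla_{X_\alpha}U$, and (iv) from $\nabla_\xi U=b\,\nabla_{X_\alpha}U$. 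Identity (vi) requires no new work: the already-established Lemma~\ref{lemma:xi:calculations}~(\ref{lemma:xi:calculations:ii}) gives $\nabla_\xi\xi=b^2H_\alpha-ab|\alpha|^2X_\alpha$, which is exactly $b|\alpha|U$.

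Finally, (v) is a direct bracket computation: expanding $[U,\xi]$ and using $[H_\alpha,X_\alpha]=|\alpha|^2X_\alpha$ together with $[H_\alpha,H_\alpha]=[X_\alpha,X_\alpha]=0$ gives $[U,\xi]=(b^2+a^2|\alpha|^2)|\alpha|X_\alpha=|\alpha|X_\alpha$, where the normalization $a^2|\alpha|^2+b^2=1$ is used. The only genuinely delicate point in the whole argument is the first paragraph: keeping track of the two distinct inner products $\langle\cdot,\cdot\rangle$ and $\langle\cdot,\cdot\rangle_{B_\theta}$ and of the $\theta$-images landing in negative root spaces, so that the $\g{g}_{-\alpha}$-contributions drop out correctly when projecting onto $\g{a}\oplus\g{n}$. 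Once $\nabla_{X_\alpha}H_\alpha$ and $\nabla_{X_\alpha}X_\alpha$ are secured, the rest is bookkeeping.
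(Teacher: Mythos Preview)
Your proposal is correct and follows essentially the same approach as the paper: both reduce everything to Levi-Civita computations along $X_\alpha$ via~\eqref{eq:inner:an:bphi}, kill the $\nabla_{H_\alpha}$-terms with Lemma~\ref{lemma:xi:calculations}~(\ref{lemma:xi:calculations:iii}), and assemble the rest by linearity. The only cosmetic difference is granularity: the paper computes $\nabla_{X_\alpha}\xi$ and $\nabla_{X_\alpha}U$ directly, whereas you factor one step further and compute $\nabla_{X_\alpha}H_\alpha$ and $\nabla_{X_\alpha}X_\alpha$ first, which is arguably cleaner but amounts to the same calculation.
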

\begin{proof}
In this proof, let $Z\in\g{a}\oplus\g{n}$ be arbitrary. Firstly, using~\eqref{eq:inner:an:bphi},  Lemma~\ref{lemma:berndt:sanmartin}~(\ref{lemma:berndt:sanmartin:i}) and~\eqref{eq:relation:inner}, we have
\begin{align*} 
\langle \nabla_{X_{\alpha}} \xi, Z \rangle & = \frac{1}{4}\langle [X_{\alpha}, a H_{\alpha} + b X_{\alpha}] + [\theta X_{\alpha}, a H_{\alpha} + b X_{\alpha}] - [X_{\alpha}, \theta (a H_{\alpha} + b X_{\alpha})], Z  \rangle_{B_\theta}\\
& = \frac{1}{4} \langle -a [H_{\alpha}, X_{\alpha}] + b [\theta X_{\alpha}, X_{\alpha}] - a [H_{\alpha}, X_{\alpha}] -b [X_{\alpha}, \theta X_{\alpha}], Z  \rangle_{B_\theta} \\
& =  -a |\alpha|^2 \frac{1}{2} \langle X_{\alpha}, Z \rangle_{B_\theta} + b \langle   H_{\alpha}, Z  \rangle_{B_\theta}
 = -a |\alpha|^2 \langle X_{\alpha}, Z \rangle + b\langle   H_{\alpha}, Z  \rangle =\langle |\alpha|U,Z\rangle.
\end{align*}
This proves claim~\ref{lemma:u:1}. Using  Lemma~\ref{lemma:xi:calculations}~(\ref{lemma:xi:calculations:iii}) and claim~\ref{lemma:u:1} we get
\[
\nabla_U \xi = b|\alpha|^{-1}  \nabla_{H_{\alpha}} \xi - a |\alpha| \nabla_{X_{\alpha}} \xi = -a |\alpha|^2 U,
\] 
which proves claim~\ref{lemma:u:2}. Now, using again~\eqref{eq:inner:an:bphi}, Lemma~\ref{lemma:berndt:sanmartin}~(\ref{lemma:berndt:sanmartin:i}) and~\eqref{eq:relation:inner}, we obtain
\begin{align*}\label{calculation:levi:xalpha:u}
\nonumber\langle \nabla_{X_{\alpha}} U, Z \rangle & = \frac{1}{4}\langle [X_{\alpha},U] + [\theta X_{\alpha}, U] - [X_{\alpha}, \theta U], Z  \rangle_{B_\theta}\\
\nonumber& = \frac{1}{4} \langle -b |\alpha|^{-1}[H_{\alpha}, X_{\alpha}] -a |\alpha| [\theta X_{\alpha}, X_{\alpha}] -b |\alpha|^{-1}[H_{\alpha}, X_{\alpha}] + a |\alpha| [X_{\alpha}, \theta X_{\alpha}] , Z  \rangle_{B_\theta} \\
\nonumber& = - a|\alpha| \langle H_{\alpha}, Z \rangle_{B_\theta} -\frac{1}{2}b |\alpha| \langle X_{\alpha}, Z \rangle_{B_\theta}  = -a|\alpha| \langle H_{\alpha}, Z \rangle  -b |\alpha| \langle X_{\alpha}, Z \rangle.
\end{align*}
Hence $\nabla_{X_{\alpha}} U=- |\alpha|\xi$, and then, using Lemma~\ref{lemma:xi:calculations}~(\ref{lemma:xi:calculations:iii}), we get
\[
\nabla_U U = b|\alpha|^{-1}  \nabla_{H_{\alpha}} U - a |\alpha| \nabla_{X_{\alpha}} U = a |\alpha|^2 \xi,
\qquad
\nabla_\xi U = a \nabla_{H_{\alpha}} U + b \nabla_{X_{\alpha}} U = -b |\alpha| \xi,
\]
which proves \ref{lemma:u:4} and \ref{lemma:u:5}. Now, taking into account that $a^2 |\alpha|^2 + b^2 = 1$ since $\xi$ is a unit vector, we get
\[
[U, \xi] = [b |\alpha|^{-1} H_{\alpha} -a |\alpha| X_{\alpha}, a H_{\alpha} + b X_{\alpha}] = b^2 |\alpha| X_{\alpha} + a^2 |\alpha|^3 X_{\alpha} =  |\alpha| X_{\alpha},
\]
which proves assertion~\ref{lemma:u:6}. Finally, claim~\ref{lemma:u:7} follows directly from Lemma~\ref{lemma:xi:calculations}~(\ref{lemma:xi:calculations:ii}).
\end{proof}
\begin{proposition}\label{proposition:d:u}
$D U = (\tr (\Ss_{\xi}) a |\alpha|^2 + b^2 |\alpha|^2 + c)U$.
\end{proposition}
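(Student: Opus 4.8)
The plan is to compute $DU$ directly from the definition of $D$ in~\eqref{eq:operator:d}, namely $DU = \tr(\Ss_\xi)\,\Ss_\xi U - R_\xi U - \Ss_\xi^2 U + cU$, by evaluating each of the three operators on $U$ with the help of the formulas collected in Lemma~\ref{lemma:u}. Since $U$ is a unit vector of $\g{s}$ and every covariant derivative and bracket involving $U$ and $\xi$ has already been recorded there, the whole argument reduces to substitution followed by one algebraic simplification using the unit-length condition on $\xi$.

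First I would dispose of the shape operator. By Lemma~\ref{lemma:u} we have $\Ss_\xi U = -\nabla_U\xi = a|\alpha|^2 U$, so that $U$ is an eigenvector of $\Ss_\xi$; consequently $\Ss_\xi^2 U = a^2|\alpha|^4 U$. This settles the first and third summands of $DU$ at once, leaving the normal Jacobi operator as the only nontrivial ingredient.

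Next I would expand $R_\xi U = \nabla_U\nabla_\xi\xi - \nabla_\xi\nabla_U\xi - \nabla_{[U,\xi]}\xi$ and feed in the inner terms $\nabla_\xi\xi = b|\alpha|U$, $\nabla_U\xi = -a|\alpha|^2 U$ and $[U,\xi] = |\alpha|X_\alpha$ from Lemma~\ref{lemma:u}. Applying the connection a second time with $\nabla_U U = a|\alpha|^2\xi$, $\nabla_\xi U = -b|\alpha|\xi$ and $\nabla_{X_\alpha}\xi = |\alpha|U$, the first two terms each equal $ab|\alpha|^3\xi$ and therefore cancel, while the third contributes $-|\alpha|^2 U$; hence $R_\xi U = -|\alpha|^2 U$. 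The one delicate point here is precisely this cancellation of the $\xi$-components: the individual ambient derivatives are not tangent to $S$, yet their combination must be (since $\langle R^{AN}(U,\xi)\xi,\xi\rangle = 0$ by the symmetries of the curvature tensor), and checking that the two $ab|\alpha|^3\xi$ contributions agree is the only place where a stray sign or factor would be fatal.

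Finally I would assemble the three pieces into $DU = \bigl(\tr(\Ss_\xi)\,a|\alpha|^2 + |\alpha|^2 - a^2|\alpha|^4 + c\bigr)U$ and simplify using that $\xi$ is a unit vector, i.e.\ $a^2|\alpha|^2 + b^2 = 1$. This yields $|\alpha|^2 - a^2|\alpha|^4 = |\alpha|^2(1 - a^2|\alpha|^2) = b^2|\alpha|^2$, whence $DU = (\tr(\Ss_\xi)\,a|\alpha|^2 + b^2|\alpha|^2 + c)U$, as claimed. I expect no genuine obstacle: once Lemma~\ref{lemma:u} is available the computation is mechanical, and the substitution of the unit-length relation is merely the closing cosmetic step.
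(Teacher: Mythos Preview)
Your proof is correct and follows essentially the same approach as the paper's: both compute $\Ss_\xi U$, $R_\xi U$ (resp.\ the paper's grouped $R_\xi + \Ss_\xi^2$), and assemble them via \eqref{eq:operator:d} using the formulas of Lemma~\ref{lemma:u} and the unit-length relation $a^2|\alpha|^2 + b^2 = 1$. The only cosmetic difference is that the paper evaluates $R_\xi + \Ss_\xi^2$ as a single block (writing $\Ss_\xi^2 U = \nabla_{\nabla_U\xi}\xi$), whereas you treat $R_\xi U$ and $\Ss_\xi^2 U$ separately and combine at the end; the algebra and the final simplification are identical.
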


\begin{proof}
Using Lemma~\ref{lemma:u} several times, and the relation $a^2 |\alpha|^2 -1 = -b^2$, we get
\begin{align*}
(R_{\xi} + \Ss_{\xi}^2) U & = \nabla_U \nabla_\xi \xi - \nabla_\xi \nabla_U \xi - \nabla_{[U, \xi]} \xi + \nabla_{\nabla_U \xi} \xi\\
& =b |\alpha|  \nabla_U U +a |\alpha|^2 \nabla_\xi U - |\alpha|\nabla_{X_{\alpha}} \xi -a |\alpha|^2 \nabla_U \xi \\
& = ab |\alpha|^3 \xi -ab |\alpha|^3 \xi -|\alpha|^2U  + a^2 |\alpha|^4 U = -b^2 |\alpha|^2 U.
\end{align*}
Moreover, by Lemma~\ref{lemma:u}~\ref{lemma:u:2} we have $\Ss_{\xi} U = - \nabla_{U} \xi = a |\alpha|^2 U$. Altogether:
\[
D U = (\tr (\Ss_{\xi}) \Ss_{\xi} - (R_{\xi} + \Ss_{\xi}^2) + c \id) U = (\tr (\Ss_{\xi}) a |\alpha|^2  + b^2 |\alpha|^2  + c)U.\qedhere
\]
\end{proof}

Proposition~\ref{proposition:d:a}, Corollary~\ref{corollary:d:n} and Proposition~\ref{proposition:d:u} provide an explicit description of the endomorphism $D$ defined in~\eqref{eq:operator:d} restricted to the subspace~\eqref{eq:decomposition:tangent} of $\g{s}$. We are therefore in position to prove Theorem~\ref{th:xi_diagonal}.

\begin{proof}[Proof of Theorem~\ref{th:xi_diagonal}]
As above in this section, we assume that $M$ has rank greater than one, $\g{s} =  (\g{a} \oplus \g{n}) \ominus \R \xi$ is a Lie subalgebra  of $\g{a}\oplus\g{n}$ with $\xi = a H_{\alpha} + b X_{\alpha}$ of unit length, for some $\alpha \in \Pi$, some unit vector $X_{\alpha} \in \g{g}_{\alpha}$, and $a$, $b \in \R$, and $S$ is an algebraic Ricci soliton, that is, the endomorphism $D$ given in~\eqref{eq:operator:d} is a derivation of $\g{s}$. The result will be proved if we show that $b = 0$. 	
	
The first step will be to show $c = 0$, where $c$ is the scalar involved in \eqref{eq:operator:d}. In order to do so, select a root $\lambda \in \Sigma^{+} \setminus \{ \alpha \}$ such that $\lambda + \alpha\in\Sigma$ and $\lambda - \alpha\notin\Sigma$. Recall that such a root exists since $M$ is irreducible and of rank greater than one by assumption: it~suffices to choose a simple root $\lambda$ connected to $\alpha$ in the Dynkin diagram of $M$. Now take a unit vector $Y_{\lambda} \in \g{g}_{\lambda}\subset\g{s}$. Since $\lambda$ is not proportional to $\alpha$ and $\dim\g{a}\geq 2$, there exists an element $H \in \g{a} \ominus \R H_{\alpha}\subset\g{s}$ such that $\lambda (H) =\langle H,H_\lambda\rangle\neq 0$. As $D$ is assumed to be a derivation,~then  
\begin{equation}\label{eq:derivation:a}
D[H, Y_{\lambda}] = [DH, Y_{\lambda}] + [H, DY_{\lambda}].
\end{equation}
Note that $D[H, Y_{\lambda}] = \lambda(H) D Y_{\lambda}$. Moreover, by Proposition~\ref{proposition:d:a}, we have $[DH, Y_{\lambda}] = c \lambda(H) Y_{\lambda}$. Since by assumption $\langle H, H_{\alpha} \rangle = 0$, then $(\lambda + \alpha) H = \lambda(H)$, and hence by Corollary~\ref{corollary:d:n} we get $[H, DY_{\lambda}] = \lambda(H) D Y_{\lambda}$. Altogether, \eqref{eq:derivation:a} now becomes $\lambda(H) D Y_{\lambda} = c \lambda(H)  Y_{\lambda} + \lambda(H) D Y_{\lambda}$. Since $\lambda(H) \neq 0$ and $Y_{\lambda}$ is a non-zero vector, we get $c = 0$, as desired.

In order to conclude that $b=0$, we will examine the relation $D[U, Y_{\lambda}] = [DU, Y_{\lambda}] + [U, DY_{\lambda}]$, which holds because $D$ is a derivation. In particular, we have
\begin{equation}\label{eq:derivation:u:y}
\langle D[U, Y_{\lambda}], Y_{\lambda} \rangle = \langle[DU, Y_{\lambda}] , Y_{\lambda} \rangle+ \langle[U, DY_{\lambda}], Y_{\lambda} \rangle.
\end{equation}
Recalling from~\eqref{eq:definition:unit} the definition of $Y_{\lambda+\alpha}$, we get
\begin{align}\label{eq:bracket:u:ylambda}
 [U, Y_{\lambda}] & = \bigl[b |\alpha|^{-1} H_{\alpha} -a |\alpha| X_{\alpha}, Y_{\lambda}\bigr] = \dfrac{b |\alpha|}{2} A_{\alpha, \lambda} Y_{\lambda} +a \sqrt{-A_{\alpha, \lambda}} |\alpha|^2 Y_{\lambda + \alpha}.
\end{align}
Now, using \eqref{eq:bracket:u:ylambda} and Corollary~\ref{corollary:d:n} along with $c=0$, we obtain 
\begin{equation}\label{eq:derivation:u:y:1}
\begin{aligned}
\langle D[U, Y_{\lambda}], Y_{\lambda} \rangle & =  \dfrac{b |\alpha|}{2} A_{\alpha, \lambda} \langle DY_{\lambda}, Y_{\lambda} \rangle+a \sqrt{-A_{\alpha, \lambda}} |\alpha|^2 \langle DY_{\lambda + \alpha}, Y_{\lambda} \rangle\\
&=  \tr(\Ss_{\xi}) \dfrac{ab}{4} |\alpha|^3 A_{\alpha, \lambda}^2 + \tr(\Ss_{\xi})\dfrac{ ab}{2} |\alpha|^3 A_{\alpha, \lambda}.
\end{aligned}
\end{equation}
From Proposition~\ref{proposition:d:u} and $c=0$ we get $DU=(a\tr  (\Ss_\xi)+b^2)|\alpha|^2 U$, and then by \eqref{eq:bracket:u:ylambda} we~have
\begin{equation}\label{eq:derivation:u:y:2}
\langle[DU, Y_{\lambda}] , Y_{\lambda} \rangle  =  (a\tr (\Ss_\xi)+b^2)|\alpha|^2 \langle [U, Y_{\lambda}], Y_{\lambda} \rangle 
 = \frac{b}{2}(a\tr (\Ss_\xi)+b^2)|\alpha|^3 A_{\alpha, \lambda}.
\end{equation}
Corollary~\ref{corollary:d:n}, \eqref{eq:bracket:u:ylambda}, and the fact that $[U, Y_{\lambda +  \alpha}] \in \g{g}_{\lambda + \alpha} \oplus \g{g}_{\lambda + 2 \alpha}$ yield
\begin{equation}\label{eq:derivation:u:y:3}
\begin{aligned}
\langle[U, DY_{\lambda}], Y_{\lambda} \rangle & =  \frac{a}{2}\tr (\Ss_{\xi}) |\alpha|^2A_{\alpha, \lambda} \langle [U, Y_{\lambda}], Y_{\lambda} \rangle - \frac{b}{2}\tr (\Ss_{\xi}) \sqrt{-A_{\alpha, \lambda}}|\alpha| \langle [U, Y_{\lambda + \alpha}], Y_{\lambda} \rangle \\
& = \tr (\Ss_{\xi}) \frac{ab}{4} |\alpha|^3 A_{\alpha, \lambda}^2.
\end{aligned}
\end{equation}
Finally, inserting \eqref{eq:derivation:u:y:1}, \eqref{eq:derivation:u:y:2} and \eqref{eq:derivation:u:y:3} into ~\eqref{eq:derivation:u:y}, we get $ b^3  |\alpha|^3  A_{\alpha, \lambda} =0$. As $\lambda - \alpha\notin\Sigma$ and $\lambda + \alpha\in\Sigma$, we have $A_{\alpha, \lambda} \neq 0$ (see~\cite[Proposition~2.48 (g)]{K}). Since of course $|\alpha| \neq 0$ as $\alpha\in\Sigma$, we conclude that $b=0$, as we wanted to show.
\end{proof}

We can now put several results together to conclude the proof of Theorem~\ref{th:symmetric}.

\begin{proof}[Proof of Theorem~\ref{th:symmetric}]
	Let $S$ be a codimension one subgroup of the solvable part $AN$ of an Iwasawa decomposition of a symmetric space of non-compact type $M$. Let us write $\g{s}=(\g{a}\oplus\g{n})\ominus\R\xi$ for the Lie algebra of $S$. First note that, since $\g{a}\oplus\g{n}$ is completely solvable, the same is true for the Lie subalgebra $\g{s}=(\g{a}\oplus\g{n})\ominus\R\xi$. Hence, by~\cite{Lauret:crelle}, the connected, simply connected Lie subgroup $S$ of the Iwasawa group $AN$ with Lie algebra $\g{s}$ is a Ricci soliton if and only if it is an algebraic Ricci soliton. 
	
	By Lemma~\ref{lemma:JDG}, if $S$ is an algebraic Ricci soliton, then either $\xi\in\g{a}$, or $\xi\notin \g{a}$ and $\xi\in\R H_\alpha\oplus\g{g}_\alpha$ for some $\alpha\in\Pi$. In the first case $S$ is a connected Lie subgroup of $AN$ containing $N$; and, conversely, any connected Lie subgroup of $AN$ containing $N$ is a solvsoliton by virtue of Proposition~\ref{prop:horosphere}. 
	
	In the second case, Theorem~\ref{th:xi_diagonal} implies that $M$ has rank one. Therefore, $M$ is a hyperbolic space and, hence,  homothetic to a Damek-Ricci space or to a real hyperbolic space $\R \mathsf{H}^n$, as recalled in~\S\ref{subsec:DR_prelim}. In the first subcase, the solvable part $AN$ of an Iwasawa decomposition associated with $M$ has the structure of a Damek-Ricci space, up to homothety. Then Theorem~\ref{th:DR} implies that $S$ is an algebraic Ricci soliton if and only if $S$ is either the subgroup $N$ of $AN$ (that is, a horosphere in the rank one symmetric space $M$) or a Lohnherr hypersurface $W^3$ in a complex hyperbolic plane $\C \mathsf{H}^2$. In the second subcase, if $M$ is a real hyperbolic space $\R \mathsf{H}^n$, the isometric action of $S$ on $\R \mathsf{H}^n$ is of cohomogeneity one and without singular orbits (as $S$ is a codimension one subgroup of $AN$, which acts simply transitively on $\R\mathsf{H}^n$). Then any of the orbits of such action is isoparametric and, by the theory of isoparametric hypersurfaces in space forms (see for example~\cite[\S3.1]{CR:book}), it must indeed be a totally umbilic hypersurface with constant principal curvatures. Hence, by the Gauss equation of submanifold geometry, such orbits always have constant sectional curvature and are therefore Einstein. 
\end{proof}


\begin{thebibliography}{99}


\bibitem{Arroyo-Lafente1} 
R.~M.~Arroyo, R.~Lafuente: 
Homogeneous Ricci solitons in low dimensions, 
\emph{Int.\ Math.\ Res.\ Not.\ IMRN} (2015), 4901--4932. 

\bibitem{Arroyo-Lafente2} 
R.~M.~Arroyo, R.~A.~Lafuente: 
The Alekseevskii conjecture in low dimensions, 
\emph{Math.\ Ann.}\ \textbf{367} (2017), 283--309. 

\bibitem{Bergmann} I.~Bergmann: Reducible polar representations, \emph{Manuscripta Math.}\ \textbf{104} (2001), 309--324.


\bibitem{BD:gd} 
J.\ Berndt, J.~C.~D\'{\i}az-Ramos: Homogeneous hypersurfaces in complex hyperbolic spaces, \emph{Geom.\ Dedicata} \textbf{138} (2009), 129--150.


\bibitem{BO:jdg} J.~Berndt, C.~Olmos: Maximal totally geodesic submanifolds and index of symmetric spaces, \emph{J.~Differential Geom.}\ \textbf{104} (2016), no.~2, 187--217.

\bibitem{BS18} J.~Berndt, V.~Sanmart\'in-L\'opez: 
Submanifolds with constant principal curvatures in Riemannian symmetric spaces, 
arXiv:1805.10088.

\bibitem{BT:jdg}
J.\ Berndt, H.\ Tamaru: Homogeneous codimension one foliations on noncompact symmetric spaces, \emph{J.\ Differential Geom.}\ \textbf{63} (2003), no.~1, 1--40,

\bibitem{BT:crelle} J.\ Berndt, H.\ Tamaru: Cohomogeneity one actions on symmetric spaces of noncompact type, \emph{J.\ Reine Angew.\ Math.} \textbf{683} (2013), 129--159.

\bibitem{BTV} J.~Berndt, F.~Tricerri, L.~Vanhecke: \emph{Generalized Heisenberg groups and Damek-Ricci harmonic spaces.} Lecture Notes in Mathematics \textbf{1598}, Springer (1995).

\bibitem{CR:book} T.~E.~Cecil, P.~J.~Ryan: \emph{Geometry of hypersurfaces}. Springer Monographs in Mathematics, 2015.

\bibitem{CHKTT18} J.~T.~Cho, T.~Hashinaga, A.~Kubo, Y.~Taketomi, H.~Tamaru: 
Realizations of some contact metric manifolds as Ricci soliton real hypersurfaces,
\emph{J.\ Geom.\ Phys.} \textbf{123} (2018) 221--234.

\bibitem{DDR:hhn} J.\ C.\ D\'iaz-Ramos, M.\ Dom\'inguez-V\'azquez, A.~Rodr\'iguez-V\'azquez: 
Homogeneous and inhomogeneous isoparametric hypersurfaces in rank one symmetric spaces, 
arXiv:2005.09314.

\bibitem{eberlein} P.~Eberlein: \emph{Geometry of nonpositively curved manifolds.} Chicago Lectures in Mathematics, The University of Chicago Press (1996).

\bibitem{Fernandez-Culma} 
E.\ A.\ Fern\'andez-Culma: Classification of nilsoliton metrics in dimension seven, 
\emph{J.\ Geom.\ Phys.} \textbf{86} (2014) 164--179. 

\bibitem{HKT:tohoku} T.~Hashinaga, A.~Kubo, H.~Tamaru: Homogeneous Ricci soliton hypersurfaces in the complex hyperbolic spaces, \emph{Tohoku Math.\ J.}\ \textbf{68} (2016), 559--568.

\bibitem{Jab:gt} M.~Jablonski: Homogeneous Ricci solitons are algebraic, \emph{Geom.\ Topol.}\ \textbf{18} (2014), no.~4, 2477--2486.

\bibitem{Jab:crelle} M.~Jablonski: Homogeneous Ricci solitons, \emph{J.\ Reine Angew.\ Math.}\ \textbf{699} (2015), 159--182.

\bibitem{Jab:arxiv} M.~Jablonski: Einstein solvmanifolds as submanifolds of symmetric spaces, arXiv:1810.11077.


\bibitem{K} A.~W.~Knapp: 
\emph{Lie groups beyond an introduction. Second edition}.
Progress in Mathematics, 140, Birkh\"auser Boston, Inc., Boston, MA, 2002.

\bibitem{LafuenteLauret:jdg} R.~Lafuente, J.~Lauret: Structure of homogeneous Ricci solitons and the Alekseevskii conjecture, \emph{J.\ Differential Geom.}\ \textbf{98} (2014), no.~2, 315--347.

\bibitem{Lauret:mathann} J.~Lauret: Ricci soliton homogeneous nilmanifolds, \emph{Math.\ Ann.}\ \textbf{319} (2001), no.~4, 715--733.

\bibitem{Lauret:crelle} J.~Lauret: Ricci soliton solvmanifolds, \emph{J.\ Reine Angew.\ Math.}\ \textbf{650} (2011), 1--21.

\bibitem{LM} H.~B.~Lawson, Jr., M.-L.~Michelsohn: \emph{Spin geometry}. Princeton Mathematical Series, 38. Princeton University Press, Princeton, NJ, 1989.

\bibitem{Suh} 
Y.~J.~Suh: Pseudo-anti commuting Ricci tensor and Ricci soliton real hypersurfaces in the complex quadric, 
\emph{J.\ Math.\ Pures Appl.}\ \textbf{107} (2017), 429--450. 


\bibitem{Tamaru} H.~Tamaru: Parabolic subgroups of semisimple Lie groups and Einstein solvmanifolds, 
\emph{Math.\ Ann.}\ \textbf{351} (2011), no.~1, 51--66.

\bibitem{Will} 
C.~Will: The space of solvsolitons in low dimensions, \emph{Ann.\ Glob.\ Anal.\ Geom.}\ 
\textbf{40} (2011), 291--309. 


\end{thebibliography}
\end{document}